\pgfplotsset{%
   every tick label/.append style = {font=\tiny},
   every axis label/.append style = {font=\scriptsize}
}
\begin{document}

\begin{titlepage}

    \includegraphics[scale=0.2]{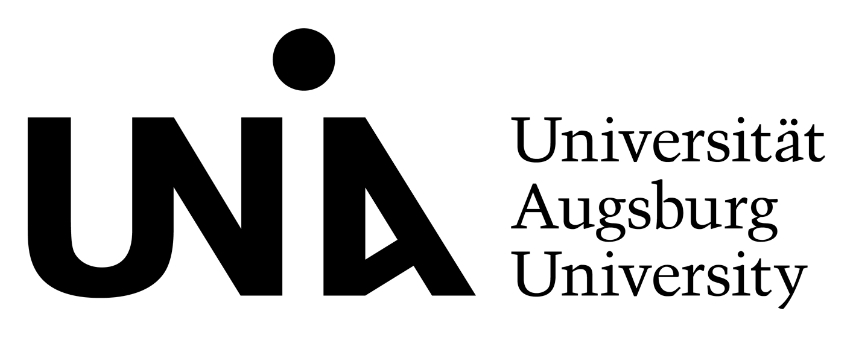}

 \begin{center} \large 
    \vspace*{0.5cm}
    Masterarbeit
    \vspace*{0.8cm}

    {\huge The curve shrinking flow, compactness and its relation to scale manifolds}
    \vspace*{1.2cm}
    
    \includegraphics[width=\linewidth]{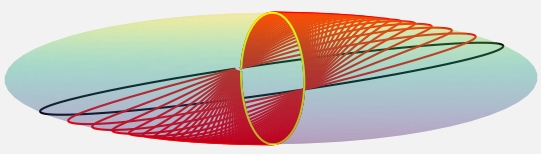}
    \vspace*{1.2cm}
    
    Oliver Neumeister
    \vspace*{1cm}

    20.07.2020
    \vspace*{2cm}

    Betreuung: Prof. Dr. Urs Frauenfelder \\[0.25cm]
    Institut f\"ur Mathematik \\[0.25cm]
    Universit\"at Augsburg \\[0.25cm]
  \end{center}
\end{titlepage}

\newpage
\thispagestyle{empty}
\mbox{}

\tableofcontents

\theoremstyle{plain}
\newtheorem{thm}{Theorem}[section]
\newtheorem{prop}[thm]{Proposition}
\newtheorem{claim}[thm]{Claim}
\newtheorem{lemma}[thm]{Lemma}
\newtheorem{cor}[thm]{Corollary}
\newtheorem{defn}[thm]{Definition}  

\theoremstyle{definition} 
\newtheorem{rmk}[thm]{Remark}
\newtheorem{exa}[thm]{Example}

\numberwithin{equation}{section}

\newpage

\section{Introduction}

In the Morse theory on compact manifolds, sequences of gradient flow lines between critical points always have convergent subsequences. Such a subsequence converges to another gradient flow line, potentially a broken one, between the same critical points.

This master thesis looks at the length functional on embedded loops and how its gradient flow lines exhibit the same behaviour as the gradient flow lines in Morse theory. The loop space will be viewed as an sc-manifold, in order to have the length as a smooth functional.

\vspace*{0.5cm}

In the first section the functional analytical foundation for the space of gradient flow lines will be built. For the space of gradient flow lines to be an sc-Banach space, we need to make the space of sc-smooth maps from $\mathbb{R}$ into an sc-Banach space to become an sc-Banach space.
To achieve this we use Bochner-Sobolev spaces $W^{n,\hat{p}}(I,\hat{X})$, that is spaces of weakly differentiable maps using the Bochner integral, where $\hat{X}$ is a n+1-tuple of Banach spaces.

The first important step is to get a compact embedding between these Bochner-Sobolev spaces. This is achieved via an extended version of the Aubin-Lions lemma. The Aubin-Lions lemma for $n = 1$ is commonly used in the theory of nonlinear partial differential equations, and thus a lecture on this topic by Prof. Schmidt has served as inspiration for this part. As the Aubin-Lions lemma only works for compact $I \subset \mathbb{R}$, we will extend it through the use of weight functions.

For the second step of proving, that smooth functions with compact support form a dense subset in all these Bochner-Sobolev spaces, we will use a version of the Meyers-Serrin theorem.

\vspace*{0.5cm}

In the second section we will look at the length functional. As the length on the loop space over a manifold has infinite dimensional critical manifolds, since the length does not change under reparametrization, we will only look at embedded loops. There it will be possible to quotient out the sc-smooth reparametrization action. We are going to get a space of unparametrized embedded loops E(M), that is an sc-manifold. On E(M), at least for most Riemannian manifolds M, the length functional is sc-smooth and has isolated critical points.

\vspace*{0.5cm}

In the third and last section of this thesis we will first look at the flow by curvature on surfaces. This curvature flow is the same as the flow prescribed by the gradient of the length functional. We will use this interchangeability of curvature and gradient flow to prove the convergence of the gradient flow lines. Lastly we will observe how the gradient flow lines of the length break. This behavior is very similar to Morse theory and thus this sections is partly influenced by a lecture on Morse theory by Prof. Frauenfelder.

\vspace*{0.5cm}

In the appendix there are several images of the curvature flow displayed. These are a result of a programming project accompanying this master thesis. A few of these images are also used in examples in the second and third section.

\vspace*{0.5cm}

My thanks goes to Urs Frauenfelder for proposing this topic to me and for discussing the directions and ideas of this thesis.

\newpage

\section{Bochner-Sobolev spaces} \label{sec2} 
Consider a reflexive sc-Banach space E, that is a Banach space E with an sc-structure which consists of reflexive banach spaces $E_m$.
In this first section we create an sc-structure for $sC^\infty(\mathbb{R},E)$, the space of sc-smooth functions from $\mathbb{R}$ to the sc-Banach space E.  To achieve that we need a nested sequence of Banach spaces $G_m$ such that $G_\infty = sC^\infty(\mathbb{R},E)$ where $G_n \hookrightarrow G_m$ is compact for $n > m$ and $G_\infty$ is dense in all $G_m$.

If E is a Hilbert space, one can use the definition made in \cite{shift_map} at the end of its third section, where weak differentiability in a Pettis integral sense is used.

Here we will use Bochner integrability instead, to get a notion of weak differentiability and Sobolev spaces that form the nested sequence $G_m$. It will be similar and more general than the notion of a sobolev space in \cite{Kre}.

\subsection{Notes on Bochner integrable functions} 

In this subsection we recall several useful properties of the spaces of Bochner-integrable functions $L^p(I,X)$. The main source used is \cite{Kre}, but there are several others covering this topic, like \cite{Vector_measures, PDE_App}.
Let $I \subset \mathbb{R}$ be connected and $(I,\Sigma,\mu)$ be a measure space, where $\mu$ is a $\sigma$-finite measure, for example the Lebesgue measure. This convention for $(I,\Sigma,\mu)$ will be used throughout section \ref{sec2}, unless otherwise stated. The notation in the following sections will be to write $dt$ instead of $d\mu(t)$.

 Let X be a Banach space. The space $L^p(I,X)$ of Bochner-integrable functions with the norm $\Vert u \Vert_{L^p(I,X)} = \Vert \Vert u(.) \Vert_X \Vert_{L^p(I,\mathbb{R})} = (\int_I \Vert u(t) \Vert_X^p dt)^{1/p}$ is a Banach space \cite{Kre, Vector_measures}.

\begin{lemma}\cite[Cor. 2.23]{Kre} \label{reflexive} 

If X is reflexive and $1 < p < \infty$ then $L^p(I,X)$ is reflexive.
\end{lemma}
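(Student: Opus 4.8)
The plan is to deduce reflexivity from the duality theory of Bochner spaces, combined with the fact that reflexive spaces enjoy the Radon--Nikodym property (RNP). Recall that a Banach space $Y$ is reflexive exactly when its canonical isometric embedding $J_Y\colon Y \to Y^{**}$ is surjective, so it suffices to identify $(L^p(I,X))^{**}$ explicitly and verify that $J$ maps onto it. Throughout, write $p'$ for the conjugate exponent, $\tfrac1p + \tfrac1{p'} = 1$; since $1 < p < \infty$ we also have $1 < p' < \infty$.

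First I would establish the dual representation: for $1 \le p < \infty$ and a Banach space $Y$ whose dual $Y^*$ has the RNP with respect to the $\sigma$-finite space $(I,\Sigma,\mu)$, the map sending $v \in L^{p'}(I,Y^*)$ to the functional $\Lambda_v\colon u \mapsto \int_I \langle v(t), u(t)\rangle\, dt$ is an isometric isomorphism $L^{p'}(I,Y^*) \cong (L^p(I,Y))^*$. That each $\Lambda_v$ is bounded with $\Vert \Lambda_v \Vert \le \Vert v \Vert_{L^{p'}(I,Y^*)}$ is immediate from a pointwise application of H\"older's inequality; the substance is surjectivity. For that, given $\Lambda \in (L^p(I,Y))^*$ I would form the $Y^*$-valued set function $A \mapsto \big(x \mapsto \Lambda(\chi_A\, x)\big)$ on sets of finite measure, check that it defines a vector measure of bounded variation which is absolutely continuous with respect to $\mu$, and invoke the RNP of $Y^*$ to extract a density $v$; a truncation argument then shows $v \in L^{p'}(I,Y^*)$ and that $\Lambda = \Lambda_v$.

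With this representation available the lemma follows by using it twice. Since $X$ is reflexive, so is $X^*$, and every reflexive space has the RNP; hence $X^*$ and $X^{**} = X$ both have it. Applying the representation with $Y = X$ gives $(L^p(I,X))^* \cong L^{p'}(I,X^*)$, and applying it again with $Y = X^*$ (whose conjugate exponent is $p$ and whose second dual is $X^{**} = X$) gives $(L^{p'}(I,X^*))^* \cong L^{p}(I,X^{**}) = L^p(I,X)$. Composing the two yields a Banach-space isomorphism $(L^p(I,X))^{**} \cong L^p(I,X)$.

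The most delicate point is not the mere existence of an isomorphism but confirming that the composite identification coincides with the canonical map $J$. I would trace a fixed $u \in L^p(I,X)$ through both steps: under the first isomorphism $(L^p(I,X))^*$ is identified with $L^{p'}(I,X^*)$ via $v \mapsto \Lambda_v$, and the functional assigned to $u$ by the second isomorphism sends $v$ to $\int_I \langle v(t), u(t)\rangle\, dt = \Lambda_v(u) = J(u)(\Lambda_v)$, which is exactly evaluation, i.e. $J(u)$. Hence the composite is $J$, so $J$ is surjective and $L^p(I,X)$ is reflexive. I expect the genuine obstacle to be the surjectivity half of the duality theorem --- specifically the construction of the Radon--Nikodym density and the verification that it lies in $L^{p'}$ --- whereas the identification with $J$ is routine bookkeeping once the pairings are kept consistent.
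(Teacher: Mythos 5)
Your argument is correct and is exactly the standard route: the paper gives no proof of its own here (the lemma is quoted from \cite{Kre}), and the cited source establishes it the same way, via the duality $(L^p(I,Y))^*\cong L^{p'}(I,Y^*)$ under the Radon--Nikodym property of $Y^*$, applied twice together with the check that the composite identification is the canonical embedding. Nothing to add.
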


\begin{lemma} \cite[Prop 2.10]{Kre} \label{sHolder} (skalar H\"{o}lder)

If $u \in L^p(I,X)$, $\varphi \in L^{p'}(I,\mathbb{R})$ where $\frac{1}{p} + \frac{1}{p'} = 1$ we have:
\begin{equation}
\Vert u \cdot \varphi \Vert_{L^1(I,X)} \leq \Vert u \Vert_{L^p(I,X)} \cdot \Vert \varphi \Vert_{L^{p'}(I,\mathbb{R})}
\end{equation}
\end{lemma}

\begin{proof}
$u \cdot \varphi$ is measureable since it is the limit of simple functions, as the pointwise product of simple functions converging to $u$ and $\varphi$ respectively.
With the H\"{o}lder inequality on $L^1(I,\mathbb{R})$ we get
\[
\begin{aligned}
\Vert u \cdot \varphi \Vert_{L^1(I,X)} = \Vert \Vert u \cdot \varphi \Vert_X \Vert_{L^1(I,\mathbb{R})} = \Vert \Vert u \Vert_X \cdot \vert \varphi \vert \Vert_{L^1(I,\mathbb{R})} \leq \\
\leq \Vert \Vert u \Vert_X  \Vert_{L^p(I,\mathbb{R})} \cdot \Vert \varphi \Vert_{L^{p'}(I,\mathbb{R})}
\end{aligned}
\]
\end{proof}

Together with $ C_c^\infty(I,\mathbb{R}) \subset L^{p'}(I,\mathbb{R})$ we will use this lemma for the next one:

\begin{lemma} \label{convergence}
If $u_m$ converges to $u$ in $L^p(I,X)$, it follows that $\int_I u_m(t)\varphi(t) dt$ converges to $\int_I u(t)\varphi(t) dt$ in X for any $\varphi \in C_c^\infty(I,\mathbb{R})$.
\end{lemma}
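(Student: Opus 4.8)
The plan is to reduce the claim to the scalar Hölder estimate of Lemma~\ref{sHolder} via the elementary triangle inequality for the Bochner integral. First I would use linearity of the Bochner integral to rewrite the difference of the two integrals as a single integral of the difference:
\[
\int_I u_m(t)\varphi(t)\,dt - \int_I u(t)\varphi(t)\,dt = \int_I \bigl(u_m(t) - u(t)\bigr)\varphi(t)\,dt .
\]
This step requires knowing that $u_m\varphi$ and $u\varphi$ are Bochner integrable, i.e.\ lie in $L^1(I,X)$; that is exactly part of the content of Lemma~\ref{sHolder}, so the linearity is justified.

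Next I would invoke the standard triangle inequality for the Bochner integral, $\Vert \int_I f(t)\,dt \Vert_X \le \int_I \Vert f(t)\Vert_X\,dt = \Vert f \Vert_{L^1(I,X)}$, applied to $f = (u_m - u)\varphi$. This yields
\[
\Bigl\Vert \int_I u_m\varphi\,dt - \int_I u\varphi\,dt \Bigr\Vert_X \le \Vert (u_m - u)\varphi \Vert_{L^1(I,X)} .
\]
The main tool is then Lemma~\ref{sHolder}: since $\varphi \in C_c^\infty(I,\mathbb{R}) \subset L^{p'}(I,\mathbb{R})$, applying the scalar Hölder inequality to $u_m - u \in L^p(I,X)$ and $\varphi$ gives
\[
\Vert (u_m - u)\varphi \Vert_{L^1(I,X)} \le \Vert u_m - u \Vert_{L^p(I,X)} \cdot \Vert \varphi \Vert_{L^{p'}(I,\mathbb{R})} .
\]
Because $\Vert \varphi \Vert_{L^{p'}(I,\mathbb{R})}$ is a fixed finite constant and $\Vert u_m - u \Vert_{L^p(I,X)} \to 0$ by hypothesis, the right-hand side tends to $0$, which is precisely convergence in $X$.

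I do not expect a genuine obstacle here; the argument collapses to a single estimate once Lemma~\ref{sHolder} and the triangle inequality for the Bochner integral are in place. The only point deserving a word of care is that $\varphi$ indeed lies in $L^{p'}(I,\mathbb{R})$ even when $I$ is unbounded, but this is immediate from the compact support of $\varphi$ (a smooth compactly supported function is bounded on a set of finite measure, hence lies in every $L^{p'}$), which is exactly the remark recorded just before the statement.
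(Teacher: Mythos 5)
Your argument is correct and follows exactly the same route as the paper's proof: rewrite the difference as a single Bochner integral, apply the triangle inequality to bound it by $\Vert (u_m-u)\varphi\Vert_{L^1(I,X)}$, and then use the scalar H\"older inequality of Lemma~\ref{sHolder} together with $\varphi \in C_c^\infty(I,\mathbb{R}) \subset L^{p'}(I,\mathbb{R})$. No differences worth noting.
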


\begin{proof}
\[
\begin{aligned}
& \Big\Vert \int_I u_m(t)\varphi(t) dt - \int_I u(t)\varphi(t) dt \; \Big\Vert^{}_X = \Big\Vert \int_I (u_m(t) - u(t))\varphi(t) dt \; \Big\Vert^{}_X \leq \\
&\leq\int_I \Vert (u_m(t) - u(t))\varphi(t) \Vert^{}_X \, dt = \Vert (u_m - u)\varphi \Vert^{}_{L^1(I,X)} \leq \\
&\leq \Vert u_m - u \Vert^{}_{L^p(I,X)} \cdot \Vert \varphi \Vert^{}_{L^{p'}(I,\mathbb{R})}
\end{aligned}
\]
\end{proof}

\begin{rmk} \cite{Kre}
If X is a Hilbert space, then $L^2(I,X)$ is also a Hilbert space with the inner product $(u,v)_{L^2(I,X)} := \int_I (u(t),v(t))^{}_X dt$
\end{rmk}

\begin{lemma}\cite[Prop. 2.15]{Kre} \label{dense} ~\par
Let $I \subseteq \mathbb{R}$ be open and take the Lebesgue measure on $I$. If $1 \leq p < \infty$ and $u \in L^p(I,X)$, then there exists a sequence $(\phi_j) \subset C_c^\infty(I,X)$ converging to $u$ in $L^p(I,X)$.
\end{lemma}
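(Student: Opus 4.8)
The plan is to build the approximating sequence in three reductions, passing from a general $u$ down to the classical scalar density statement.

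First I would reduce to simple functions. By Bochner integrability, $u$ is strongly measurable, so there is a sequence of simple functions $s_n = \sum_k \mathbf{1}_{A_{n,k}} x_{n,k}$ (finite sums, $A_{n,k}$ measurable, $x_{n,k} \in X$) with $s_n(t) \to u(t)$ for almost every $t$ and $\|s_n(t)\|_X \leq 2\|u(t)\|_X$. Then $\|s_n(t) - u(t)\|_X^p \leq (3\|u(t)\|_X)^p \in L^1(I,\mathbb{R})$, so dominated convergence gives $s_n \to u$ in $L^p(I,X)$. Moreover, on each $A_{n,k}$ with $x_{n,k} \neq 0$ the bound forces $\|u(t)\|_X \geq \|x_{n,k}\|_X / 2 > 0$, and since $\int_I \|u\|_X^p \, dt < \infty$ this shows $\mu(A_{n,k}) < \infty$. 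Hence it suffices to approximate a single block $\mathbf{1}_A x$ with $\mu(A) < \infty$ and $x \in X$ by elements of $C_c^\infty(I,X)$.

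Second I would exploit the tensor structure to push the problem onto the scalar field. If $\psi_j \in C_c^\infty(I,\mathbb{R})$ approximates $\mathbf{1}_A$ in $L^p(I,\mathbb{R})$, then $\psi_j x \in C_c^\infty(I,X)$ and, using $\|\psi_j(t)x - \mathbf{1}_A(t)x\|_X = |\psi_j(t) - \mathbf{1}_A(t)|\,\|x\|_X$ pointwise,
\[
\|\psi_j x - \mathbf{1}_A x\|_{L^p(I,X)} = \|\psi_j - \mathbf{1}_A\|_{L^p(I,\mathbb{R})}\,\|x\|_X \longrightarrow 0 .
\]
Thus everything collapses to the purely scalar statement that $C_c^\infty(I,\mathbb{R})$ is dense in $L^p(I,\mathbb{R})$.

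Third, for this scalar step I would use inner regularity of the Lebesgue measure to approximate $A$ from inside by a compact set $K \subset I$, so that $\mathbf{1}_K$ approximates $\mathbf{1}_A$ in $L^p(I,\mathbb{R})$, and then mollify, $\rho_\varepsilon * \mathbf{1}_K$, choosing $\varepsilon$ small enough that the support remains inside the open set $I$; this produces the required smooth compactly supported approximants. Finally I would assemble a single sequence $(\phi_j)$ from the three families by an $\varepsilon/3$ (equivalently, diagonal) argument over the index $n$ of $s_n$, the finitely many blocks, and the smoothing parameter.

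The main obstacle is the first reduction rather than the last: the passage to simple functions rests on strong (Bochner) measurability together with the dominating sequence $\|s_n\|_X \leq 2\|u\|_X$, which is exactly the point where the vector-valued theory genuinely differs from the scalar one. Once the problem is localized to a block $\mathbf{1}_A x$, the tensor identity reduces it to the classical scalar density, and keeping the supports of the smooth approximants inside $I$ is routine via inner regularity and mollification with small parameter.
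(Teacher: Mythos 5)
Your proof is correct and complete. The paper itself gives no proof of this lemma --- it is quoted directly from \cite[Prop.~2.15]{Kre} --- and your three-step reduction (strong measurability to simple functions with the dominating bound $\Vert s_n(t)\Vert_X \le 2\Vert u(t)\Vert_X$ and the resulting finiteness of $\mu(A_{n,k})$; the tensor identity collapsing each block $\mathbf{1}_A x$ to the scalar case; inner regularity plus mollification with the parameter chosen small enough to keep supports inside the open set $I$) is precisely the standard argument given in that reference, so there is nothing to add.
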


Since $L^p(I,X) = L^p(\mathring{I},X) = L^p(\bar{I},X)$ for both the interior and closure of $I$, we will be able to use this lemma later on for $I = [a,b]$. For general $I \subset \mathbb{R}$ we wil use the notation $C_c^\infty(I,X) := C_c^\infty(\mathring{I},X)$.

\begin{rmk} \label{weighted_L}
Now let us define weighted spaces of Bochner integrable functions. This definition has already been made in \cite{weighted_Bochner} for $I = [a,b]$. Let $I$ have the Lebesgue measure. Let $\omega$ be a locally bounded weight function on I, i.e. $\omega: I \rightarrow [0,\infty)$ is a locally bounded measurable function.
The space $L_\omega^p(I,X)$ is the space of all strongly measurable functions u on I such that 
\begin{equation}
\Vert u \Vert_{L_\omega^p(I,X)}^p := \int_I \Vert u(t) \Vert_X^p \omega(t)^p dt < \infty
\end{equation}
This condition on u is equivalent to the condition that $\omega u \in L^p(I,X)$.
We can also view these weighted Bochner spaces as Bochner spaces endowed with the measure $\nu$ where $\nu(A) := \int_A \omega(t)^p dt = \int_A \omega(t)^p d\mu(t)$ for any $A \in \Sigma$.
Thus, as $\omega$ is locally bounded, and therefore $\nu$ is a $\sigma$-finite measure\footnote{Take a countable family $(A_j)$ of compact sets (of finite measure $\mu(A_j)$), which cover I.  $\nu(A_j) = \int_{A_j} \omega(t)^p d\mu(t) < \infty$ since $\omega$ is locally bounded and therefore bounded on compact sets. }, $L_\omega^p(I,X)$ has all the properties that $L^p(I,X)$ has.
\end{rmk}

\subsection{Bochner-Sobolev spaces}  

\begin{defn} ~\par
Let X, Y be Banach spaces where X is embedded in Y, i.e. $X \hookrightarrow Y$.
A $v \in L^1(I,Y)$ is called a \textbf{weak k-th derivative} of $u \in L^1(I,X)$ if 
\begin{equation}
\int_I \varphi^{(k)}(t) u(t) dt = (-1)^k \int_I \varphi(t) v(t) dt
\end{equation}
for all $\varphi \in C_c^\infty(I,\mathbb{R})$, where $\varphi^{(k)}$ is the k-th derivative of $\varphi$.

We denote $v(t) = u^{(k)}(t)$. 
\end{defn}

Let us denote $u = u^{(0)}$. 
As the product of the functions $\varphi \in C_c^\infty(I,\mathbb{R})$ and $u \in C^1(I,X)$ is Fr\'{e}chet differentiable, we get that such a $u$ is weakly differentiable.

\begin{lemma} \label{uniqueness}
The weak k-th derivative is unique.
\end{lemma}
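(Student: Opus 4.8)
The plan is to show that if both $v$ and $\tilde v$ in $L^1(I,Y)$ satisfy the defining relation for the weak $k$-th derivative of $u$, then $v = \tilde v$ almost everywhere. First I would subtract the two defining identities: since
\begin{equation}
\int_I \varphi^{(k)}(t) u(t)\, dt = (-1)^k \int_I \varphi(t) v(t)\, dt = (-1)^k \int_I \varphi(t) \tilde v(t)\, dt
\end{equation}
holds for every test function $\varphi \in C_c^\infty(I,\mathbb{R})$, the difference $w := v - \tilde v \in L^1(I,Y)$ satisfies
\begin{equation}
\int_I \varphi(t) w(t)\, dt = 0 \qquad \text{for all } \varphi \in C_c^\infty(I,\mathbb{R}).
\end{equation}
So the whole problem reduces to the standard fundamental lemma of the calculus of variations in the vector-valued setting: an $L^1$ function against which all smooth compactly supported scalar test functions integrate to zero must vanish almost everywhere.

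To prove this reduced statement I would reduce the vector-valued claim to a scalar one by pairing with functionals. For any continuous linear functional $\lambda \in Y^*$, the scalar function $\lambda \circ w \in L^1(I,\mathbb{R})$ satisfies $\int_I \varphi(t)\, (\lambda \circ w)(t)\, dt = \lambda\big(\int_I \varphi(t) w(t)\, dt\big) = 0$ for all $\varphi \in C_c^\infty(I,\mathbb{R})$, where I have used that $\lambda$ commutes with the Bochner integral. The classical scalar fundamental lemma (proved via mollification, or equivalently by the density of $C_c^\infty$ established in Lemma \ref{dense}) then gives $\lambda \circ w = 0$ almost everywhere. The subtle point is that the exceptional null set a priori depends on $\lambda$, so one cannot immediately conclude $w(t) = 0$ for a.e. $t$.

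The main obstacle is exactly this passage from ``$\lambda(w(t)) = 0$ a.e. for each fixed $\lambda$'' to ``$w(t) = 0$ a.e.'' The clean way around it uses strong measurability: by the Pettis measurability theorem $w$ is essentially separably valued, so there is a null set $N$ outside of which $w$ takes values in a separable closed subspace $Y_0 \subseteq Y$. On a separable space the dual is weak-$*$ separable, so I can choose a countable set $\{\lambda_n\} \subset Y^*$ that is norming for $Y_0$, i.e. $\Vert y \Vert_Y = \sup_n |\lambda_n(y)|$ for all $y \in Y_0$. For each $n$ there is a null set $N_n$ with $\lambda_n(w(t)) = 0$ off $N_n$; on the complement of the countable union $N \cup \bigcup_n N_n$ (still a null set) we have $\Vert w(t) \Vert_Y = \sup_n |\lambda_n(w(t))| = 0$, hence $w = 0$ almost everywhere and therefore $v = \tilde v$ in $L^1(I,Y)$.

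Alternatively, and perhaps more in keeping with the density machinery already available, I could avoid functionals altogether and argue directly in $Y$: using Lemma \ref{dense} I would approximate an arbitrary test section and employ a vector-valued mollification, using that the Bochner integral commutes with bounded operators and satisfies $\Vert \int_I f\, dt\Vert_Y \le \int_I \Vert f\Vert_Y\, dt$, to show that the $L^1$-Lebesgue points of $w$ all vanish; since almost every point is a Lebesgue point of a Bochner-integrable function, this again yields $w = 0$ a.e. I expect the functional-pairing route to be the shortest to write, with the separability reduction being the one genuinely non-formal step.
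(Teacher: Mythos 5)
Your proof is correct and takes essentially the same route as the paper: subtract the two defining identities to reduce the claim to the vector-valued fundamental lemma of the calculus of variations, i.e.\ that $\int_I \varphi(t)\,w(t)\,dt = 0$ for all $\varphi \in C_c^\infty(I,\mathbb{R})$ forces $w = 0$ almost everywhere. The paper simply asserts this last implication without justification, whereas you supply the genuinely nontrivial vector-valued step --- pairing with functionals, then passing from ``$\lambda(w(t))=0$ a.e.\ for each $\lambda$'' to ``$w(t)=0$ a.e.'' via essential separable-valuedness and a countable norming family --- which is exactly the detail one would want to see made explicit.
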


\begin{proof}
Let $u \in L^1(I,X)$, let $v_1, v_2 \in L^1(I,Y)$ be two weak k-th derivatives of $u$, we therefore get:
\[
\int_I \varphi(t) v_1(t) dt = \int_I \varphi^{(k)}(t) u(t) dt = \int_I \varphi(t) v_2(t) dt
\]
so we have $0 = \int_I \varphi(t) (v_1(t)-v_2(t)) dt$ and since this holds for every $\varphi \in C_c^\infty(I,\mathbb{R})$, we get that $v_1(t) - v_2(t) = 0$ almost everywhere. Therefore we have that $v_1 = v_2 \in L^1(I,Y)$.
\end{proof}

\begin{cor}
$u^{(n)}$ be the n-th weak derivative of $u \in L^1(I,X)$, then for all $k + l = n$ we have $u^{(n)} = (u^{(k)})^{(l)}$.
\end{cor}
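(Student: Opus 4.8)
The plan is to verify directly that $u^{(n)}$ satisfies the defining identity for the weak $l$-th derivative of $u^{(k)}$, and then to invoke Lemma~\ref{uniqueness} to upgrade this to the claimed equality. I assume, as the statement of the corollary implicitly requires, that both $u^{(k)}$ and $u^{(n)}$ exist as weak derivatives of $u$ in the sense of the preceding definition; the point of the proof is to show they are compatible. It is worth noting at the outset which Banach space each derivative is valued in, so that the weak-derivative relation between $u^{(k)}$ and $u^{(n)}$ is typed consistently along the embedding chain.

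First I would fix an arbitrary test function $\psi \in C_c^\infty(I,\mathbb{R})$ and record the small but essential observation that $\psi^{(l)}$ again lies in $C_c^\infty(I,\mathbb{R})$, since differentiation preserves both smoothness and compactness of support. This is exactly what allows the two defining identities to be chained through a common test function applied to $u$ itself.

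Next I would run the two definitions in sequence. Applying the definition of the weak $k$-th derivative with the admissible test function $\psi^{(l)}$, and using $(\psi^{(l)})^{(k)} = \psi^{(k+l)} = \psi^{(n)}$, gives
\[
\int_I \psi^{(l)}(t)\, u^{(k)}(t)\, dt = (-1)^k \int_I (\psi^{(l)})^{(k)}(t)\, u(t)\, dt = (-1)^k \int_I \psi^{(n)}(t)\, u(t)\, dt .
\]
Applying the definition of the weak $n$-th derivative with the test function $\psi$ then rewrites the right-hand side, and since $n = k+l$ the accumulated sign collapses via $(-1)^{k+n} = (-1)^{2k+l} = (-1)^l$, so that
\[
\int_I \psi^{(l)}(t)\, u^{(k)}(t)\, dt = (-1)^l \int_I \psi(t)\, u^{(n)}(t)\, dt .
\]
As $\psi$ was arbitrary, this is precisely the defining relation expressing that $u^{(n)}$ is a weak $l$-th derivative of $u^{(k)}$; by Lemma~\ref{uniqueness} it is the unique such, whence $(u^{(k)})^{(l)} = u^{(n)}$.

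I do not expect a genuine obstacle here: the whole argument is a bookkeeping of integration by parts carried out at the level of scalar test functions, the only care being the tracking of the sign factor $(-1)^{k+n}$ and the observation that $\psi^{(l)}$ stays in $C_c^\infty(I,\mathbb{R})$. The one point I would state cleanly rather than gloss over is that the identity derived above \emph{is} the definition, so that uniqueness is what converts "$u^{(n)}$ is a weak $l$-th derivative of $u^{(k)}$" into the asserted equality, and that (as a by-product) the computation also establishes the existence of $(u^{(k)})^{(l)}$ given the existence of $u^{(k)}$ and $u^{(n)}$.
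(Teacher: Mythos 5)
Your argument is correct and is exactly the computation the paper leaves implicit: the corollary is stated without proof as an immediate consequence of Lemma~\ref{uniqueness}, and your chaining of the two defining identities through the test function $\psi^{(l)}$, with the sign bookkeeping $(-1)^{k+n}=(-1)^{l}$, followed by the appeal to uniqueness, is precisely that intended argument. Your side remarks (that $\psi^{(l)}$ remains in $C_c^\infty(I,\mathbb{R})$ and that one must assume $u^{(k)}$ exists, since the statement presupposes it) are the right points to make explicit.
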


\begin{defn} ~\par
Let $X_0, . . . ,X_n$ be Banach spaces with $X_n \hookrightarrow X_{n-1} \hookrightarrow . . . \hookrightarrow X_0$. Denote $\hat{X} := (X_n, . . . X_0)$ and $\hat{p} := (p_n, . . . p_0) \in [1,\infty]^{n+1}$. 
The \textbf{n-th Bochner-Sobolev space} is 
\begin{equation}
\begin{aligned}
W^{n,\hat{p}}(I,\hat{X}) := \{ u \in L^{p_n}(I,X_n) \mid \, & u^{(k)} \text{exists for } 1 \leq k \leq n \\
							& \text{and it lies in }  L^{p_{n-k}}(I,X_{n-k}) \}
\end{aligned}
\end{equation}
It is a normed vector space with the norm 
\begin{equation}
\Vert u \Vert_{W^{n,\hat{p}}(I,\hat{X})} := \sum_{k = 0}^n \Vert u^{(k)} \Vert^{}_{L^{p_{n-k}}(I,X_{n-k})}
\end{equation}
\end{defn}

Although the notation $u^{(k)} \in L^{p_{n-k}}(I,X_{n-k})$ is a bit tedious in the upcoming proofs, it will avoid confusion when we will transition to sc-smooth Banach spaces.

\begin{rmk} ~\par
\begin{enumerate}
\item Sometimes we will write $W^{n,\hat{p}}(I;X_n, . . . ,X_0)$ instead of $W^{n,\hat{p}}(I,\hat{X})$.
\item For the cases of $n = 1$ and $n = 2$, this definition has already been made, for example in \cite[chapter 7]{PDE_App}.
\item For $p_0 = . . . = p_n =: p$ and $X_0 = . . . = X_n =: X$ we write the resulting Bochner-Sobolev space as $W^{n,p}(I,X)$. This simpler version of a Bochner-Sobolev space has already been defined in \cite{Kre}. 
\item For $p_0 = . . . = p_n =: p$ we get an embedding $W^{n,p}(I,X_n) \hookrightarrow W^{n,p}(I,\hat{X})$.
\item We get an embedding $W^{n,\hat{p}}(I,\hat{X}) \hookrightarrow W^{n,p}(I,X_0)$, where $\displaystyle{p = \min_k p_k}$.
\end{enumerate}
\end{rmk}

\begin{prop}
$W^{n,\hat{p}}(I,\hat{X})$ is a Banach space.
\end{prop}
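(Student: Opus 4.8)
The plan is to run the standard completeness argument for Sobolev-type spaces, reducing everything to the already-established completeness of the component spaces $L^{p_{n-k}}(I,X_{n-k})$ and then using Lemma \ref{convergence} to push the defining weak-derivative identity through to the limit. Before that, I would note briefly that linearity of the Bochner integral together with the uniqueness from Lemma \ref{uniqueness} gives $(u+\lambda w)^{(k)} = u^{(k)} + \lambda w^{(k)}$, so that $W^{n,\hat{p}}(I,\hat{X})$ is indeed a vector space and the stated expression is a genuine norm (its triangle inequality being inherited summand-by-summand from the $L^{p_{n-k}}$-norms).

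First I would take a Cauchy sequence $(u_m)$ in $W^{n,\hat{p}}(I,\hat{X})$. Because the norm is the sum $\sum_{k=0}^{n}\Vert u_m^{(k)}\Vert_{L^{p_{n-k}}(I,X_{n-k})}$, each of the $n+1$ sequences $(u_m^{(k)})_m$ is Cauchy in the corresponding space $L^{p_{n-k}}(I,X_{n-k})$. Since each $X_{n-k}$ is a Banach space, these spaces are complete, so $(u_m^{(k)})_m$ converges to some limit $v_k \in L^{p_{n-k}}(I,X_{n-k})$. Writing $u := v_0$, we in particular have $u_m \to u$ in $L^{p_n}(I,X_n)$. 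The candidate limit in the full space is then $u$, and the remaining task is to show that $v_k$ is precisely the weak $k$-th derivative $u^{(k)}$.

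For the identification step, I would fix $\varphi \in C_c^\infty(I,\mathbb{R})$ and start from the identity valid for each $m$,
\[
\int_I \varphi^{(k)}(t)\, u_m(t)\, dt = (-1)^k \int_I \varphi(t)\, u_m^{(k)}(t)\, dt .
\]
Applying Lemma \ref{convergence} to $u_m \to u$ with test function $\varphi^{(k)}$ shows the left-hand side converges (in $X_n$), and applying it to $u_m^{(k)} \to v_k$ with test function $\varphi$ shows the right-hand side converges (in $X_{n-k}$). Interpreting both sides inside the common space $X_{n-k}$ via the continuous embedding $X_n \hookrightarrow X_{n-k}$, I can pass to the limit and obtain
\[
\int_I \varphi^{(k)}(t)\, u(t)\, dt = (-1)^k \int_I \varphi(t)\, v_k(t)\, dt .
\]
Since this holds for every test function, $v_k$ is a weak $k$-th derivative of $u$, and Lemma \ref{uniqueness} then gives $v_k = u^{(k)}$. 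Hence $u \in W^{n,\hat{p}}(I,\hat{X})$, and as each component converges we conclude $u_m \to u$ in the $W^{n,\hat{p}}$-norm, proving completeness.

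The step I expect to be the main (if modest) obstacle is the bookkeeping in the identification argument: for fixed $m$ the two sides of the weak-derivative relation naturally live in different spaces, namely $\int_I \varphi^{(k)} u_m \in X_n$ and $\int_I \varphi\, u_m^{(k)} \in X_{n-k}$, so the limit must be taken in one ambient space where both converge simultaneously. The clean resolution is to use the continuity of the embeddings $X_n \hookrightarrow X_{n-k} \hookrightarrow X_0$ to transfer convergence to a coarser common space, and to be careful that Lemma \ref{convergence} is invoked in the correct $L^{p}$-space for each of the two terms.
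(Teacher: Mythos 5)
Your proposal is correct and follows essentially the same route as the paper: componentwise completeness of the $L^{p_{n-k}}(I,X_{n-k})$ spaces, then Lemma \ref{convergence} combined with the uniqueness of the weak derivative (Lemma \ref{uniqueness}) to identify the limits $v_k$ as $u^{(k)}$. Your extra care about where the two integrals live (passing both sides into $X_{n-k}$ via the continuous embedding, and keeping the $(-1)^k$ factor explicit) is a slight tightening of the paper's argument rather than a different approach.
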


\begin{proof}
Let $u_l$ be a Cauchy sequence in $W^{n,\hat{p}}(I,\hat{X})$, i.e. for all $\epsilon > 0$ and all l, m large enough we have $\Vert u_l - u_m \Vert_{W^{n,\hat{p}}(I,\hat{X})} < \epsilon$.
We therefore have that $\Vert u_l^{(k)} - u_m^{(k)} \Vert^{}_{L^{p_{n-k}}(I,X_{n-k})} < \epsilon$ for all $0 \leq k \leq n$. \\
Since $\displaystyle{L^{p_{n-k}}(I,X_{n-k})}$ is complete, we get $\displaystyle{\lim_{l \rightarrow \infty} u_l^{(k)} = u^k}$. Let us set $u^0 =: u$. With Lemma \ref{convergence} we get 
\[
\lim_{l \rightarrow \infty} \int_I u_l^{(k)}(t) \varphi(t) dt =  \int_I u^k(t) \varphi(t) dt
\]
and 
\[
\lim_{l \rightarrow \infty} \int_I u_l(t) \varphi^{(k)}(t) dt =  \int_I u \varphi^{(k)}(t) dt
\]
for any $\varphi \in C_c^\infty(I,\mathbb{R})$. Since the left sides of these two equations are equal and the weak derivatives are unique (Lemma \ref{uniqueness}), we get that u is k-times weakly differentiable and $u^{(k)} = u^k$. Hence $\displaystyle{\lim_{l \rightarrow \infty} u_l = u}$ in $W^{n,\hat{p}}(I,\hat{X})$.
\end{proof}

\begin{rmk}
If the $\hat{X}$ are Hilbert spaces, then $W^{n,2}(I,\hat{X})$, i.e. $p_k = 2$ for all $k \in \{0, . . . n\}$, is also a Hilbert space with the inner product 
\[(u,v)_{W^{n,2}(I,\hat{X})} := \sum_{k=0}^n (u^{(k)},v^{(k)})_{L^2(I,X_{n-k})}^{}\]
This inner product induces the norm 
\[\Vert u \Vert_{W^{n,2}(I,\hat{X})} := \left( \sum_{k=0}^n \Vert u^{(k)}\Vert^2_{L^2(I,X_{n-k})}\right)^{1/2}\]
which is equivalent to the norm previously defined on $W^{n,2}(I,\hat{X})$.
\end{rmk}


\begin{prop} \label{reflexive2}
Let $\hat{X}$ be reflexive Banach spaces and $\hat{p} \in (1,\infty)^{n+1}$. Then $W^{n,\hat{p}}(I,\hat{X})$ is reflexive.
\end{prop}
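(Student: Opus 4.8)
The plan is to realize $W^{n,\hat{p}}(I,\hat{X})$ as a closed subspace of a finite product of reflexive Banach spaces, and then invoke the two standard permanence properties: a finite product of reflexive spaces is reflexive, and a closed subspace of a reflexive space is reflexive.

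First I would set $P := \prod_{k=0}^n L^{p_{n-k}}(I,X_{n-k})$, equipped with the norm $\Vert (v_0,\ldots,v_n) \Vert_P := \sum_{k=0}^n \Vert v_k \Vert_{L^{p_{n-k}}(I,X_{n-k})}$. By Lemma \ref{reflexive}, each factor $L^{p_{n-k}}(I,X_{n-k})$ is reflexive, since $X_{n-k}$ is reflexive and $1 < p_{n-k} < \infty$ by hypothesis. A finite product of reflexive Banach spaces is reflexive (the dual of the product is the product of the duals, and reflexivity is stable under finite products), so $P$ is reflexive. Note that the particular choice of product norm is immaterial, as all norms on a finite product are equivalent and reflexivity is preserved under equivalent norms; the sum norm is chosen only to make the next map an exact isometry.

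Next I would define the linear map $T \colon W^{n,\hat{p}}(I,\hat{X}) \to P$ by $T(u) := (u^{(0)}, u^{(1)}, \ldots, u^{(n)})$. By the very definition of the norm on $W^{n,\hat{p}}(I,\hat{X})$, the map $T$ is an isometry onto its image. Since $W^{n,\hat{p}}(I,\hat{X})$ is complete (shown in the preceding proposition) and $T$ is an isometric embedding, the image $T(W^{n,\hat{p}}(I,\hat{X}))$ is a complete, hence closed, subspace of $P$. Being a closed subspace of the reflexive space $P$, it is reflexive, and therefore so is its isometric copy $W^{n,\hat{p}}(I,\hat{X})$.

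The one point deserving care is the closedness of the image, since $T(W^{n,\hat{p}}(I,\hat{X}))$ is precisely the subspace of $P$ cut out by the weak-differentiability constraints $v_k = (v_0)^{(k)}$. Rather than verifying directly that these constraints survive passage to limits in $P$ (which would essentially amount to re-deriving completeness via Lemma \ref{convergence} and the uniqueness Lemma \ref{uniqueness}), I would let the already-established completeness of $W^{n,\hat{p}}(I,\hat{X})$ supply the closedness through the isometry $T$. Everything else is routine, relying only on the reflexivity of the factor spaces from Lemma \ref{reflexive} and the permanence of reflexivity under finite products and passage to closed subspaces.
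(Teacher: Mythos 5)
Your proposal is correct and follows essentially the same route as the paper: both realize $W^{n,\hat{p}}(I,\hat{X})$ as an isometric copy of a closed subspace of the product $L^{p_n}(I,X_n) \times \cdots \times L^{p_0}(I,X_0)$ and invoke the permanence of reflexivity under finite products and closed subspaces. Your added remark that closedness of the image follows from the already-proved completeness of $W^{n,\hat{p}}(I,\hat{X})$ is a useful justification of a step the paper leaves implicit.
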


\begin{proof}
By Lemma \ref{reflexive} we know that $L^{p_k}(I,X_k)$ are reflexive Banach spaces. Define an isometry
\[ \begin{aligned}
T: W^{n,\hat{p}}(I,\hat{X}) &\longrightarrow L^{p_n}(I,X_n) \times . . . \times L^{p_0}(I,X_0) \\
u &\longmapsto (u, . . ., u^{(n)})
\end{aligned} \]
$T(W^{n,\hat{p}}(I,\hat{X})) \subset L^{p_n}(I,X_n) \times . . . \times L^{p_0}(I,X_0)$ is a closed subspace of a reflexive Banach space and therefore also reflexive. Since T is an isometry, and hence an isometric isomorphism onto its image, $W^{n,\hat{p}}(I,\hat{X})$ is reflexive.
\end{proof}

\begin{lemma} \label{continuous} \cite[Lemma 7.1]{PDE_App}

Let the embedding of $X_1$ in $X_0$ be continuous, $\hat{X} = (X_1,X_0)$, $\hat{p} \in [1,\infty]^2$. Let $I =  [a,b]$ be endowed with the Lebesgue measure. Then the embedding $W^{1,\hat{p}}(I,\hat{X}) \hookrightarrow C(I,X_0)$ is continuous.
\end{lemma}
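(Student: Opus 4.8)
The plan is to realize the classical one-dimensional Sobolev embedding in the Bochner setting: I would produce an absolutely continuous $X_0$-valued representative of $u$ obtained by integrating its weak derivative, and then estimate its supremum norm. The starting observation is that, because $I = [a,b]$ is bounded and $X_1 \hookrightarrow X_0$ is continuous with some constant $C_0$, both $u$ and $u^{(1)}$ lie in $L^1(I,X_0)$. Indeed $\Vert u(\cdot)\Vert_{X_0} \le C_0 \Vert u(\cdot)\Vert_{X_1}$, and Lemma \ref{sHolder} applied with $\varphi \equiv 1 \in L^{p_1'}(I,\mathbb{R})$ (integrable since $I$ is finite) gives $\Vert u\Vert_{L^1(I,X_0)} \le C_0 (b-a)^{1/p_1'}\Vert u\Vert_{L^{p_1}(I,X_1)}$, and similarly $\Vert u^{(1)}\Vert_{L^1(I,X_0)} \le (b-a)^{1/p_0'}\Vert u^{(1)}\Vert_{L^{p_0}(I,X_0)}$. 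This reduces the whole statement to understanding $W^{1,1}(I,X_0)$, and the endpoint exponents $p_k \in \{1,\infty\}$ are absorbed uniformly by the finite length of $I$.

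Next I would establish the fundamental theorem of calculus. Define $w(t) := \int_a^t u^{(1)}(\tau)\,d\tau \in X_0$. Continuity of $w$ follows from $\Vert w(t) - w(s)\Vert_{X_0} \le \int_s^t \Vert u^{(1)}(\tau)\Vert_{X_0}\,d\tau$ together with absolute continuity of the Lebesgue integral of the $L^1$-function $\Vert u^{(1)}(\cdot)\Vert_{X_0}$. To see that $w$ is weakly differentiable with $w^{(1)} = u^{(1)}$, I would take $\varphi \in C_c^\infty(I,\mathbb{R})$ and swap the order of integration by a Fubini theorem for Bochner integrals:
\[
\int_I w(t)\varphi'(t)\,dt = \int_I u^{(1)}(\tau)\Big(\int_\tau^b \varphi'(t)\,dt\Big)d\tau = -\int_I u^{(1)}(\tau)\varphi(\tau)\,d\tau ,
\]
using $\varphi(b) = 0$ since $\varphi$ has compact support in $\mathring I$. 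Hence $u - w$ has vanishing weak derivative, and I would conclude that $u - w$ equals a constant $c \in X_0$ almost everywhere by the vector-valued du Bois-Reymond argument: fixing $\varphi_0 \in C_c^\infty(I,\mathbb{R})$ with $\int_I \varphi_0 = 1$ and writing any test function as $\psi = \varphi' + \big(\int_I\psi\big)\varphi_0$ for a suitable primitive $\varphi \in C_c^\infty(I,\mathbb{R})$ forces $\int_I (u - w - c)\psi = 0$ for all $\psi$, whence $u - w = c$ a.e. Thus $\tilde u(t) := c + w(t)$ is a continuous $X_0$-valued representative of $u$.

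Finally I would estimate the norm. Since $\tilde u$ is continuous, the mean-value property yields a point $s_0 \in I$ with $\Vert \tilde u(s_0)\Vert_{X_0} \le \frac{1}{b-a}\int_I \Vert \tilde u(\tau)\Vert_{X_0}\,d\tau = \frac{1}{b-a}\Vert u\Vert_{L^1(I,X_0)}$. For every $t \in I$ the identity $\tilde u(t) = \tilde u(s_0) + \int_{s_0}^t u^{(1)}(\tau)\,d\tau$ then gives $\Vert \tilde u(t)\Vert_{X_0} \le \Vert \tilde u(s_0)\Vert_{X_0} + \Vert u^{(1)}\Vert_{L^1(I,X_0)}$, and combining this with the two $L^1$-bounds from the first step yields $\Vert \tilde u\Vert_{C(I,X_0)} \le C\big(\Vert u\Vert_{L^{p_1}(I,X_1)} + \Vert u^{(1)}\Vert_{L^{p_0}(I,X_0)}\big) = C\Vert u\Vert_{W^{1,\hat p}(I,\hat X)}$, which is exactly the asserted continuity of the embedding.

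The main obstacle is the fundamental-theorem-of-calculus step rather than the estimates: it requires both a Fubini theorem valid for Bochner integrals and the vector-valued du Bois-Reymond lemma that a vanishing weak derivative forces an almost-everywhere constant. Once the continuous representative $\tilde u$ is in hand, the supremum estimate is a routine application of the averaging argument and the two Hölder bounds on the finite interval.
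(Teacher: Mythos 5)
Your proof is correct. The paper itself does not prove this lemma but cites it from \cite{PDE_App}, and your argument --- constructing the absolutely continuous representative $\tilde u(t)=c+\int_a^t u^{(1)}(\tau)\,d\tau$ via Fubini and the vector-valued du Bois--Reymond lemma, then bounding $\sup_t\Vert\tilde u(t)\Vert_{X_0}$ by an averaging point plus the $L^1$-norm of the derivative --- is exactly the standard proof given in that reference, with the endpoint exponents correctly absorbed by the finiteness of $[a,b]$.
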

Note that in \cite{PDE_App} $I := [0,T]$ instead of a more general $I = [a,b]$, but the proof can still be done in the same way with very minor adjustments.

\begin{prop} \label{WinC}
Let the embeddings of $X_{k+1}$ in $X_k$ be continuous for all $0 \leq k \leq n$, $\hat{X} = (X_{n+1}, . . . ,X_0)$, $\hat{p} \in [1,\infty]^{n+1}$ and $I = [a,b]$ with the Lebesgue measure. Then we get that 
\[ W^{n+1,\hat{p}}(I,\hat{X}) \subset C^l(I,X_{n-l}) \]
for all $l \in \{0,...,n\}$.
\end{prop}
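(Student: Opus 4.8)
The plan is to reduce the whole statement to the first-order embedding Lemma \ref{continuous}, applied not to $u$ alone but to each of its weak derivatives $u^{(j)}$ in turn, and then to promote the purely topological ($C^0$) information it provides into genuine $C^l$-regularity by means of the fundamental theorem of calculus. Fix $u \in W^{n+1,\hat{p}}(I,\hat{X})$ with $\hat{X} = (X_{n+1},\dots,X_0)$, so that $u^{(k)} \in L^{p_{n+1-k}}(I,X_{n+1-k})$ for all $0 \le k \le n+1$, and fix $l \in \{0,\dots,n\}$.

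First I would set up the first-order problems. For each $j$ with $0 \le j \le l$ consider $u^{(j)} \in L^{p_{n+1-j}}(I,X_{n+1-j})$. By the corollary to Lemma \ref{uniqueness} its first weak derivative is $(u^{(j)})^{(1)} = u^{(j+1)} \in L^{p_{n-j}}(I,X_{n-j})$, and since the embedding $X_{n+1-j} \hookrightarrow X_{n-j}$ is continuous by hypothesis, $u^{(j)}$ is an element of the first-order space $W^{1,(p_{n+1-j},\,p_{n-j})}(I;X_{n+1-j},X_{n-j})$. Applying Lemma \ref{continuous} then gives a continuous representative $u^{(j)} \in C(I,X_{n-j})$; in particular $u^{(l)} \in C(I,X_{n-l})$.

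The remaining and most delicate point is to identify these continuous weak derivatives with classical ones. Here I would use that the proof of Lemma \ref{continuous} in fact produces, for the continuous representative, the integral representation
\[
u^{(j)}(t) = u^{(j)}(a) + \int_a^t u^{(j+1)}(s)\,ds \qquad \text{in } X_{n-j},
\]
valid for all $t \in I$. For $0 \le j \le l-1$ the integrand $u^{(j+1)}$ is continuous into $X_{n-(j+1)}$, hence, via the continuous embedding $X_{n-(j+1)} \hookrightarrow X_{n-l}$, continuous into $X_{n-l}$. Therefore, regarded as an $X_{n-l}$-valued map, $u^{(j)}$ is the primitive of a continuous function, and the fundamental theorem of calculus for continuous Banach-space-valued maps shows it is continuously differentiable with classical derivative $u^{(j+1)}$.

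Iterating this from $j = l-1$ down to $j = 0$ chains the classical derivatives together: $u = u^{(0)}$ is $l$-times classically differentiable into $X_{n-l}$, its $k$-th classical derivative coincides with the weak derivative $u^{(k)}$ for $0 \le k \le l$, and $u^{(l)}$ is continuous by the previous step. This is precisely $u \in C^l(I,X_{n-l})$, and since $l$ was arbitrary the proposition follows. The main obstacle is exactly this last bootstrap: Lemma \ref{continuous} only delivers $C^0$-regularity, so one must either extract the integral representation from its proof or invoke the Bochner fundamental theorem of calculus separately to pass from ``continuous weak derivative'' to ``classical derivative''; one must also take care to select the continuous representatives consistently so that the chain of primitives is well defined.
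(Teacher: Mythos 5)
Your proposal is correct and follows essentially the same route as the paper: both reduce to the first-order embedding of Lemma \ref{continuous} applied to each weak derivative $u^{(j)}$ in turn, and then upgrade continuity of the weak derivatives to classical differentiability by chaining from $j=l-1$ down to $j=0$. The only difference lies in that final upgrade, where the paper invokes almost-everywhere differentiability (Proposition 3.8 in \cite{Kre}) together with continuity of $u^{(k+1)}$, while you use the integral representation and the fundamental theorem of calculus for continuous Banach-space-valued integrands; your version is, if anything, the more explicit justification of that step.
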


\begin{proof}
Using the definition of $W^{n+1,\hat{p}}(I,\hat{X})$ we get that 
\[ W^{n+1,\hat{p}}(I;X_{n+1}, . . . ,X_0) \subset W^{l+1,\hat{p}}(I;X_{n+1}, . . . ,X_{n-l})  \subset W^{l+1,p}(I,X_{n-l}) \]
 where $\displaystyle{p = \min_{0\leq k \leq l} p_k}$. Also we know that $u \in W^{l+1,p}(I,X_{n-l})$ if and only if $u^{(k)} \in W^{1,p}(I,X_{n-l})$ for all $0 \leq k \leq l$.
Using proposition 3.8 in \cite{Kre} we therefore get that $u^{(k)}$ is differentiable almost everywhere for all $0 \leq k \leq l$. Using lemma \ref{continuous}, we get that $u^{(k+1)}$ is continuous for all $0 \leq k \leq l-1$ and therefore $u^{(k)}$ is differentiable for all $0 \leq k \leq l-1$, i.e. $u \in C^l(I,X_{n-l})$.
\end{proof}

\begin{rmk} \label{WinC2}
If $I \subset \mathbb{R}$ is not compact, we can use that for every $t_0 \in I$ we have an $\epsilon > 0$, so that $[t_0 - \epsilon, t_0 + \epsilon] \subset I$. Hence we have for every $u$, that $u\vert^{}_{[t_0 - \epsilon, t_0 + \epsilon]} \in W^{n+1,\hat{p}}([t_0 - \epsilon, t_0 + \epsilon],\hat{X}) \subset C^l([t_0 - \epsilon, t_0 + \epsilon],X_{n-l})$, if $u \in W^{n+1,\hat{p}}(I,\hat{X})$. Since Fr\'{e}chet differentiability is a local condition we know that $u$ is therefore k-times differentiable on the whole $I$.
\end{rmk}

\subsection{Compact embeddings}  

\begin{lemma} [Ehrlings lemma] \cite[p.58]{Lions} \cite[Lemma 7.6]{PDE_App}

Let X, Y, Z be Banach spaces where X is compactly embedded in Y and Y embedds continuously into Z, i.e. $\displaystyle{X \xhookrightarrow[]{cpt} Y \hookrightarrow Z}$. Then for any $\epsilon > 0$, there exists a constant $C_\epsilon$ such that 
\begin{equation} \label{ehrling}
\Vert x \Vert^{}_Y \leq \epsilon \Vert x \Vert^{}_X + C_\epsilon \Vert x \Vert^{}_Z
\end{equation}
\end{lemma}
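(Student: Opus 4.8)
The plan is to argue by contradiction, exploiting the compact embedding to extract a convergent subsequence that is forced to have norm one and to vanish at the same time. This is the standard route for Ehrling-type estimates, and the only real design choice is how to negate the conclusion correctly.

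First I would negate the statement: if the inequality fails, then there is a fixed $\epsilon_0 > 0$ for which no admissible constant exists. Taking the candidate constants to be the positive integers $1, 2, 3, \dots$, this failure produces a sequence $(x_n) \subset X$ with
\[
\Vert x_n \Vert^{}_Y > \epsilon_0 \Vert x_n \Vert^{}_X + n \Vert x_n \Vert^{}_Z .
\]
Since all three norms are positively homogeneous, I would rescale each $x_n$ so that $\Vert x_n \Vert^{}_Y = 1$, which turns the inequality into $1 > \epsilon_0 \Vert x_n \Vert^{}_X + n \Vert x_n \Vert^{}_Z$.

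From this normalized inequality I would read off two consequences simultaneously. On one hand $\Vert x_n \Vert^{}_X < 1/\epsilon_0$, so $(x_n)$ is bounded in $X$; on the other hand $\Vert x_n \Vert^{}_Z < 1/n$, so $x_n \to 0$ in $Z$. Because $X$ is compactly embedded in $Y$, the bounded sequence $(x_n)$ admits a subsequence $(x_{n_k})$ that converges in $Y$ to some limit $y$. The continuity of the embedding $Y \hookrightarrow Z$ then upgrades this to convergence $x_{n_k} \to y$ in $Z$ as well.

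The contradiction now comes from uniqueness of limits in $Z$: since $x_{n_k} \to 0$ in $Z$, we must have $y = 0$, whereas continuity of the norm on $Y$ gives $\Vert y \Vert^{}_Y = \lim_k \Vert x_{n_k} \Vert^{}_Y = 1$, which is impossible. I do not anticipate any genuine obstacle in this argument; the only step that requires care is the form of the negation, namely fixing a single $\epsilon_0$ while letting the constant grow without bound. This is precisely what makes both the boundedness in $X$ and the decay to zero in $Z$ available at the same time, and hence what lets the compactness hypothesis do its work.
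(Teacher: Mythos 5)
Your argument is correct: it is the standard compactness-versus-vanishing contradiction (negate with a fixed $\epsilon_0$ and constants $n \to \infty$, normalize in $Y$, extract a $Y$-convergent subsequence via the compact embedding, and derive $1 = \Vert y \Vert_Y$ while $y = 0$ in $Z$). The paper itself gives no proof of this lemma and simply cites \cite{Lions} and \cite{PDE_App}, where essentially this same argument appears, so there is nothing in the paper for your proof to diverge from.
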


The next theorem will provide the compact embedding of $W^n$ in $W^{n-1}$. This theorem only works for $I =[a,b]$ with the Lebesgue measure. Thus we will later need to consider weighted Bochner-Sobolev spaces to extend this compactness result for $I = \mathbb{R}$. The following theorem will be an extended version of the Aubin-Lions lemma, for the original version see for example \cite[chap.1, thm 5.1]{Lions}.

\begin{thm} [Aubin-Lions lemma] ~\par
Let $\hat{X}=(X_n, . . . , X_0), \hat{Y}=(Y_{n-1}, . . . , Y_0)$ be Banach spaces, where $\hat{X}$ are reflexive. Let the embedding of $X_k$ in $Y_{k-1}$ be compact for all $k \in \{1, . . . , n\}$ and the embedding of $Y_k$ in $X_k$ be continuous for all $k \in \{0, . . . , n-1\}$, that is
\[ X_n \xhookrightarrow[]{cpt} Y_{n-1} \hookrightarrow X_{n-1} \xhookrightarrow[]{cpt} \: .\: .\: .\: .\: . \hookrightarrow X_1 \xhookrightarrow[]{cpt} Y_0 \hookrightarrow X_0 \]
where all embeddings are continuous.
Let $I =[a,b]$ with the Lebesgue measure.
Let $\hat{p} = (p_n, . . . , p_0)$ and $\tilde{p} = (p_n, . . . , p_1)$.

Then the embedding 
\[ W^{n,\hat{p}}(I,\hat{X}) \hookrightarrow W^{n-1,\tilde{p}}(I,\hat{Y}) \]
is compact.
\end{thm}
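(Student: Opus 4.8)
The plan is to prove sequential compactness directly: given a bounded sequence $(u_m)$ in $W^{n,\hat p}(I,\hat X)$, I want to extract a subsequence that converges in $W^{n-1,\tilde p}(I,\hat Y)$. The starting observation is that the target norm splits as
\[
\Vert u\Vert_{W^{n-1,\tilde p}(I,\hat Y)} = \sum_{k=0}^{n-1} \Vert u^{(k)}\Vert_{L^{p_{n-k}}(I,Y_{n-1-k})},
\]
so it suffices to produce, for each fixed order $k\in\{0,\dots,n-1\}$, a subsequence along which $u_m^{(k)}$ converges in $L^{p_{n-k}}(I,Y_{n-1-k})$, and then diagonalise over the finitely many values of $k$. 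The hypotheses are tailored so that each order $k$ sees a three-space chain of the classical Aubin--Lions type: writing $w_m := u_m^{(k)}$, boundedness of $(u_m)$ in $W^{n,\hat p}$ gives that $w_m$ is bounded in $L^{p_{n-k}}(I,X_{n-k})$ and that its weak derivative $w_m' = u_m^{(k+1)}$ is bounded in $L^{p_{n-1-k}}(I,X_{n-1-k})$, while the assumed embeddings supply $X_{n-k}\xhookrightarrow[]{cpt} Y_{n-1-k}\hookrightarrow X_{n-1-k}$.

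The core of the proof is therefore the single-order ($n=1$) statement: a set bounded in $L^{q}(I,X)$ whose weak derivatives are bounded in $L^{r}(I,Z)$, with $X\xhookrightarrow[]{cpt} B\hookrightarrow Z$, is precompact in $L^{q}(I,B)$. This is where the real work and the main obstacle lie, because the compact embedding $X\xhookrightarrow[]{cpt} B$ only yields precompactness of $\{w_m(t)\}$ for each fixed $t$; upgrading this to $L^q(I,B)$-precompactness requires equicontinuity in the time variable, and this is exactly what the derivative bound provides. Concretely I would (i) apply Ehrling's lemma pointwise and integrate to obtain, for every $\varepsilon>0$, an estimate of the shape $\Vert w_m-w_l\Vert_{L^q(I,B)} \le \varepsilon\,\Vert w_m-w_l\Vert_{L^q(I,X)} + C_\varepsilon\,\Vert w_m-w_l\Vert_{L^q(I,Z)}$, so that the uniform $X$-bound tames the first term and it remains to make the sequence Cauchy in the weaker space $L^q(I,Z)$; (ii) control time translations $\Vert w_m(\cdot+h)-w_m(\cdot)\Vert_{L^q(I',Z)}$ by integrating $w_m'$, obtaining smallness as $h\to0$ uniformly in $m$; and (iii) combine this with the fixed-time precompactness coming from $X\xhookrightarrow[]{cpt} B$ via a vector-valued Riesz--Fr\'{e}chet--Kolmogorov compactness criterion to conclude precompactness in $L^q(I,B)$. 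Reflexivity of the $X_k$ (through Lemma \ref{reflexive}, and Proposition \ref{reflexive2} on the whole space when the exponents are finite) lets me first pass to a weakly convergent subsequence whose limit is the natural candidate; the endpoint exponents $p_k\in\{1,\infty\}$ have to be treated with the usual care (for $q=\infty$ convergence is in $C(I,B)$ and requires $r>1$). Alternatively this entire step can be imported wholesale from the classical Aubin--Lions lemma \cite{Lions} applied to the pair $(w_m,w_m')$.

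With the single-order statement in hand, I would run it successively for $k=0,1,\dots,n-1$ and diagonalise to obtain one subsequence, still denoted $(u_m)$, such that $u_m^{(k)}\to w_k$ in $L^{p_{n-k}}(I,Y_{n-1-k})$ for every $k$. It then remains to check that these limits assemble into a single element of $W^{n-1,\tilde p}(I,\hat Y)$, i.e. that $w_k$ is the weak $k$-th derivative of $u:=w_0$. Here I would test against $\varphi\in C_c^\infty(I,\mathbb R)$: the defining identity $\int_I \varphi^{(k)} u_m\,dt = (-1)^k\int_I \varphi\,u_m^{(k)}\,dt$ passes to the limit by Lemma \ref{convergence}, once both sides are regarded in the common ambient space $X_0$, to give $\int_I \varphi^{(k)} u\,dt = (-1)^k\int_I \varphi\,w_k\,dt$; uniqueness of weak derivatives (Lemma \ref{uniqueness}) then forces $w_k = u^{(k)}$. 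Consequently $u\in W^{n-1,\tilde p}(I,\hat Y)$, and summing the $n$ separate convergences shows $u_m\to u$ in the full $W^{n-1,\tilde p}$-norm, which is precisely the asserted compactness.
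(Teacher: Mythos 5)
Your proposal is correct, and its outer reduction --- splitting the $W^{n-1,\tilde p}$-norm over derivative orders, treating each order $k$ as a three-space problem for the pair $(u_m^{(k)},u_m^{(k+1)})$ with $X_{n-k}$ compactly embedded in $Y_{n-k-1}$ embedded in $X_{n-k-1}$, and using Ehrling's lemma to absorb the strong $X_{n-k}$-norm so that only convergence in the weak space $L^{p_{n-k}}(I,X_{n-k-1})$ remains --- is exactly the skeleton of the paper's proof. Where you genuinely diverge is in how that remaining convergence is obtained. The paper first extracts a single weakly convergent subsequence in the reflexive space $W^{n,\hat p}(I,\hat X)$ (so one candidate limit serves all orders simultaneously, with no diagonalisation over $k$ needed), then proves pointwise almost-everywhere convergence of $u_l^{(k)}(t_0)$ in $X_{n-k-1}$ via a rescaling and integration-by-parts trick tested against dual elements, and finally upgrades this to $L^{p_{n-k}}$-convergence by dominated convergence, using the embedding of Lemma \ref{continuous} into $C(I,X_{n-k-1})$ to supply the dominating bound. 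You instead derive equicontinuity of time translates from the bound on $u_m^{(k+1)}$ and invoke a vector-valued Kolmogorov--Riesz criterion (equivalently, you import the classical single-order Aubin--Lions statement from \cite{Lions}), then diagonalise over the finitely many orders and identify the limits as weak derivatives by testing against $C_c^\infty(I,\mathbb{R})$, which is a valid completion. Your route avoids Lemma \ref{continuous} and the dominated-convergence step at the price of the Kolmogorov--Riesz machinery; the paper's route avoids translation estimates at the price of the pointwise argument. Two small points apply: the reflexivity both proofs lean on (your weak-limit extraction, the paper's Proposition \ref{reflexive2}) silently requires $\hat p\in(1,\infty)^{n+1}$ even though the theorem statement does not say so; and the fixed-time precompactness you invoke in step (iii) should really be precompactness of the set averages $\int_A u_m^{(k)}\,dt$ in $Y_{n-k-1}$, since the pointwise values $u_m^{(k)}(t)$ need not be bounded in $X_{n-k}$ for fixed $t$ when only an $L^{p_{n-k}}$-bound is available.
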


\begin{proof}
Suppose $(u_l) \subset W^{n,\hat{p}}(I,\hat{X})$ is a bounded sequence. Since $W^{n,\hat{p}}(I,\hat{X})$ is a reflexive Banach space by Proposition \ref{reflexive2}, there exists a weak convergent subsequence $u_l \rightharpoonup u$. (In this proof all subsequences of $u_l$ will still be named $u_l$.) W.l.o.g. $u = 0$, otherwise we look at $u_l - u$ instead.

Using Ehrlings lemma for $\displaystyle{X_{n-k} \xhookrightarrow[]{cpt} Y_{n-k-1} \hookrightarrow X_{n-k-1}}$, we get for any $\epsilon > 0$ a $C_{\epsilon,k} > 0$ such that 
\[ \begin{aligned}
\Vert u_l^{(k)} \Vert^{}_{L^{p_{n-k}}(I,Y_{n-k-1})} &= \big\Vert \Vert u_l^{(k)} \Vert^{}_{Y_{n-k-1}} \big\Vert^{}_{L^{p_{n-k}}(I,\mathbb{R})} \\
\leq& \big\Vert \epsilon \Vert u_l^{(k)} \Vert^{}_{X_{n-k}} + C_{\epsilon,k} \Vert u_l^{(k)} \Vert^{}_{X_{n-k-1}} \big\Vert^{}_{L^{p_{n-k}}(I,\mathbb{R})} \\
\leq& \ \epsilon \big\Vert \Vert u_l^{(k)} \Vert^{}_{X_{n-k}} \big\Vert^{}_{L^{p_{n-k}}(I,\mathbb{R})} +C_{\epsilon,k} \big\Vert \Vert u_l^{(k)} \Vert^{}_{X_{n-k-1}} \big\Vert^{}_{L^{p_{n-k}}(I,\mathbb{R})} \\
=& \ \epsilon \Vert u_l^{(k)} \Vert^{}_{L^{p_{n-k}}(I,X_{n-k})} +C_{\epsilon,k} \Vert u_l^{(k)} \Vert^{}_{L^{p_{n-k}}(I,X_{n-k-1})}
\end{aligned} \]
Let us now take $\displaystyle{C_\epsilon := \max_k (C_{\epsilon,k})}$ and let $\Vert u_l \Vert_{W^{n,\hat{p}}(I,\hat{X})} \leq \tilde{C}$. We thus get 
\begin{align*}
\Vert u_l^{(k)} \Vert^{}_{W^{n-1, \tilde{p}}(I,\hat{Y})} &= \sum_{k = 0}^{n-1} \Vert u_l^{(k)} \Vert^{}_{L^{p_{n-k}}(I,Y_{n-k-1})}  \\
&\leq \sum_{k = 0}^{n-1} \epsilon \Vert u_l^{(k)} \Vert^{}_{L^{p_{n-k}}(I,X_{n-k})} +C_{\epsilon,k} \Vert u_l^{(k)} \Vert^{}_{L^{p_{n-k}}(I,X_{n-k-1})} \\
&\leq \epsilon \ \tilde{C} + C_\epsilon \sum_{k = 0}^{n-1} \Vert u_l^{(k)} \Vert^{}_{L^{p_{n-k}}(I,X_{n-k-1})}
\end{align*}

It is therefore left to prove that $\displaystyle{\lim_{l \rightarrow \infty} \Vert u_l^{(k)} \Vert^{}_{L^{p_{n-k}}(I,X_{n-k-1})} = 0} \, $ for all \\ $k \in \{0, . . . , n-1\}$.

As $u_l \rightharpoonup 0$ in $W^{n,\hat{p}}(I,\hat{X})$, using the isometry from the proof of proposition \ref{reflexive2}, we also have $u_l^{(k)} \rightharpoonup 0$ in $L^{p_{n-k}}(I,X_{n-k})$. 

Fix $t_0 \in I = [a,b]$ and define $v_l^k: [0,1] \rightarrow X_k$, $v_l^k(t) := u_l^{(k)}(t_0 + \lambda t)$, with $\lambda$ so small that $t_0 + \lambda < b$. Hence 
\begin{equation} v_l^k (0) = u_l^{(k)} (t_0) \end{equation}
\begin{equation} \begin{aligned}
\Vert v_l^k \Vert^{}_{L^{p_{n-k}}(0,1;X_{n-k})} &= \left( \int_0^1 \Vert v_l^k(t) \Vert^{p_{n-k}}_{X_{n-k}} dt \right)^{1/{p_{n-k}}} \\
&= \left( \int_{t_0}^{t_0+\lambda} \Vert u_l^{(k)}(s) \Vert^{p_{n-k}}_{X_{n-k}} \lambda^{-1} ds \right)^{1/{p_{n-k}}} \\
&\leq \lambda^{-1/p_{n-k}} \left( \int_a^b \Vert u_l^{(k)}(s) \Vert^{p_{n-k}}_{X_{n-k}} ds \right)^{1/{p_{n-k}}} \\
&= \lambda^{-1/p_{n-k}} \Vert u_l^{(k)} \Vert^{}_{L^{p_{n-k}}(I,X_{n-k})} \\
&\leq \tilde{C} \lambda^{-1/p_{n-k}}
\end{aligned} \end{equation}
and for the first weak derivative of $v_l^k$:
\begin{equation} \label{ineq2} \begin{aligned}
\Vert (v_l^k)' \Vert^{}_{L^{p_{n-k-1}}(0,1;X_{n-k-1})} &= \left( \int_0^1 \Vert (v_l^k(t))' \Vert^{p_{n-k-1}}_{X_{n-k-1}} dt \right)^{1/{p_{n-k-1}}} \\
&= \left( \int_{t_0}^{t_0+\lambda} \Vert \lambda \cdot u_l^{(k+1)}(s) \Vert^{p_{n-k-1}}_{X_{n-k-1}} \lambda^{-1} ds \right)^{1/{p_{n-k-1}}} \\
&\leq \lambda^{1-\frac{1}{p_{n-k-1}}} \left( \int_a^b \Vert u_l^{(k+1)}(s) \Vert^{p_{n-k-1}}_{X_{n-k-1}} ds \right)^{1/{p_{n-k-1}}} \\
&= \lambda^{1-\frac{1}{p_{n-k-1}}} \Vert u_l^{(k+1)} \Vert^{}_{L^{p_{n-k-1}}(I,X_{n-k-1})} \\
&\leq \tilde{C} \lambda^{1-\frac{1}{p_{n-k-1}}}
\end{aligned} \end{equation}

Now consider a $\phi \in C^1([0,1],\mathbb{R})$ with $\phi(0) = -1$ and $\phi(1) = 0$, then
\[ v_l^k(0) = \int_0^1 \frac{d}{dt}(\phi(t) v_l^k(t)) dt =\int_0^1 \phi'(t) v_l^k(t) dt + \int_0^1 \phi(t) (v_l^k)'(t) dt \]
Using the skalar H\"{o}lder inequality (lemma \ref{sHolder}), $\Vert \phi \Vert_{L^q(0,1;\mathbb{R})} =: c$ where \\ $\frac{1}{p_{n-k-1}} + \frac{1}{q} = 1$ and the inequality (\ref{ineq2}), we therefore get
\begin{equation} \label{ineq3} \begin{aligned}
\Vert v_l^k(0) \Vert_{X_{n-k-1}}^{} \leq& \int_0^1 \Vert \phi(t) (v_l^k)'(t) \Vert_{X_{n-k-1}} dt + \left\Vert \int_0^1 \phi'(t) v_l^k(t) dt \right\Vert_{X_{n-k-1}} \\
\leq& \Vert \phi \Vert_{L^q(0,1;\mathbb{R})} \cdot \Vert (v_l^k)' \Vert^{}_{L^{p_{n-k-1}}(0,1;X_{n-k-1})} \\
&+ \left\Vert \int_0^1 \phi'(t) v_l^k(t) dt \right\Vert_{X_{n-k-1}} \\
\leq& \: C \cdot \lambda^{1-\frac{1}{p_{n-k-1}}} + \left\Vert \int_0^1 \phi'(t) v_l^k(t) dt \right\Vert_{X_{n-k-1}} \\
\end{aligned} \end{equation}
Now we need to get the second term on the RHS of (\ref{ineq3}) to be small. We test the integral with elements $x'$ of the dual space of $X_{n-k}$.
\[ \begin{aligned}
\langle x' , \int_0^1 \phi'(t) v_l^k(t) dt \rangle^{}_{X_{n-k}} &= \int_0^1 \langle \phi'(t) x' , v_l^k(t) \rangle^{}_{X_{n-k}} dt \\
&= \lambda^{-1}  \int_a^b \langle \chi_{[t_0,t_0+\lambda]}(s) \phi'\left(\frac{s-t_0}{\lambda}\right) x' , u_l^{(k)}(s) \rangle^{}_{X_{n-k}} ds \\
&\xrightarrow[l \rightarrow \infty]{} 0
\end{aligned} \]
The convergence to 0 happens since $u_l^{(k)} \rightharpoonup 0$ in $L^{p_{n-k}}(I,X_{n-k})$. 
We have thus shown that 
\[ \int_0^1 \phi'(t) v_l^k(t) dt \rightharpoonup 0  \;\;\;\;\;\; \text{   in } X_{n-k}. \]
Because of $X_{n-k} \xhookrightarrow[]{cpt} Y_{n-k-1} \hookrightarrow X_{n-k-1}$ we have $X_{n-k} \xhookrightarrow[]{cpt} X_{n-k-1}$ and thus we get
\[ \int_0^1 \phi'(t) v_l^k(t) dt \rightarrow 0  \;\;\;\;\;\; \text{   in } X_{n-k-1}. \]

For any $\epsilon > 0$ we can now choose $\lambda > 0$ in such a way, that \\ $\frac{\epsilon}{2} \geq \: C \cdot \lambda^{1-\frac{1}{p_{n-k-1}}}$.
Then we can choose $l$ large enough to get \\ $\frac{\epsilon}{2} \geq \left\Vert \int_0^1 \phi'(t) v_l^k(t) dt \right\Vert_{X_{n-k-1}}$.
With the definition of $v_l^k$ and the inequality (\ref{ineq3}) we thus get

\[ \begin{aligned} 
\Vert u_l^{(k)}(t_0) \Vert_{X_{n-k-1}}^{} &= \Vert v_l^k(0) \Vert_{X_{n-k-1}}^{} \\
 &\leq \: C \cdot \lambda^{1-\frac{1}{p_{n-k-1}}} + \left\Vert \int_0^1 \phi'(t) v_l^k(t) dt \right\Vert_{X_{n-k-1}} \leq \epsilon 
\end{aligned} \]
We have thus shown that $u_l^{(k)}$ converges pointwise almost everywhere to 0 in $X_{n-k-1}$, i.e. $\displaystyle{ \lim_{l \rightarrow \infty} \Vert u_l^{(k)}(t) \Vert_{X_{n-k-1}}^{} = 0}$ a.e..

With lemma \ref{continuous} we know that the embedding \\
 $W^{1,(p_{n-k},p_{n-k-1})}(I,(X_{n-k},X_{n-k-1})) \hookrightarrow C(I,X_{n-k-1})$ is continuous.\\
$(u_l^{(k)}) \subset W^{1,(p_{n-k},p_{n-k-1})}(I,(X_{n-k},X_{n-k-1}))$ is bounded, since \\ $(u_l) \subset W^{n,\hat{p}}(I,\hat{X})$ is bounded. 
Hence $\Vert u_l^{(k)}(t) \Vert_{X_{n-k-1}}^{} \leq \: C$, where C is independent of t and l.
With the dominated convergence theorem we therefore get
\[ \begin{aligned}
\lim_{l \rightarrow \infty} \Vert u_l^{(k)} \Vert^{}_{L^{p_{n-k}}(I,X_{n-k-1})} &= \lim_{l \rightarrow \infty} \left( \int_I \Vert u_l^{(k)}(t) \Vert^{p_{n-k}}_{X_{n-k-1}} dt \right)^{1/p_{n-k}} \\
&= \left( \int_I \left(  \lim_{l \rightarrow \infty} \Vert u_l^{(k)}(t) \Vert^{}_{X_{n-k-1}} \right)^{p_{n-k}} dt \right)^{1/p_{n-k}} = 0
\end{aligned} \]
\end{proof}

Since $W^{n,\hat{p}}(I,\hat{X}) = W^{n,\hat{p}}(int(I),\hat{X}) =W^{n,\hat{p}}(\bar{I},\hat{X})$ the Aubin-Lions lemma could be used for any bounded and connected $I \subset \mathbb{R}$. The condition that $I$ be compact was only used, so that we can use lemma \ref{continuous}.

\begin{defn} ~\par
Let $\omega: I \rightarrow [0,\infty)$ be a locally bounded weight function, $I$ being endowed with the Lebesgue measure. Let $\hat{X}$ and $\hat{p}$ be as in the definition of Bochner-Sobolev spaces. The \textbf{n-th weighted Bochner-Sobolev space} is 
\begin{equation}
\begin{aligned}
W^{n,\hat{p}}_\omega(I,\hat{X}) := \{ u \in L^{p_n}_\omega(I,X_n) \mid & u^{(k)} \text{exists for } 1 \leq k \leq n \\
							& \text{and it lies in }  L^{p_{n-k}}_\omega(I,X_{n-k}) \}
\end{aligned}
\end{equation}
It is a normed vector space with the norm 
\begin{equation}
\Vert u \Vert_{W^{n,\hat{p}}_\omega(I,\hat{X})} := \sum_{k = 0}^n \Vert u^{(k)} \Vert^{}_{L^{p_{n-k}}_\omega(I,X_{n-k})}
\end{equation}
\end{defn}

\begin{rmk} 
 As seen in remark \ref{weighted_L}, we can view these weighted Bochner-Sobolev spaces as Bochner-Sobolev spaces with another measure $\nu$ on I. Since this measure $\nu$ is $\sigma$-finite, weighted Bochner-Sobolev spaces have all the properties which Bochner-Sobolev spaces have.
 
Note that by using lemma \ref{continuous}, we get that every $u \in W^{n,\hat{p}}_\omega(I,\hat{X})$ is continuous, but the embedding $W^{n,\hat{p}}_\omega(I,\hat{X}) \hookrightarrow C^0(I,X_0)$ might not be continuous, depending on the weight $\omega$.
 \end{rmk}

\begin{defn} ~\par
A sequence of continuous weight functions $\omega_n: \mathbb{R} \rightarrow (0,\infty), \;\; n \in \mathbb{N}_0$ is called \textbf{scaled weights} if for all $m,n \in \mathbb{N}_0$, such that $m < n$, there exist $T \geq 0$ and $D > 0$ such that
\begin{enumerate}[i)]
\item $\displaystyle{\lim_{t \rightarrow \pm\infty} \frac{\omega_m(t)}{\omega_n(t)} = 0}$
\item for all $t_1 > t_2 > T$ we have $\frac{\omega_m(t_1)}{\omega_n(t_1)} < D \: \frac{\omega_m(t_2)}{\omega_n(t_2)}$
\item for all $t_1 < t_2 < -T$ we have $\frac{\omega_m(t_1)}{\omega_n(t_1)} < D \: \frac{\omega_m(t_2)}{\omega_n(t_2)}$
\end{enumerate}
\end{defn}
This defintion could equivalently be done with $m = n-1$.
Note that all conditions except the continuity only dictate behavior outside of a compact set, as continuous weights that differ only on a compact set define the same weighted (Bochner-)Sobolev space.

\begin{exa} \label{exweights}
Exponential weights are smooth scaled weights. \\
Take a smooth monotone cutoff function $\beta \in C^\infty(\mathbb{R},[-1,1])$ such that $\beta(s) = -1$ for $s\leq -1$ and $\beta(s) = 1$ for $s \geq 1$.
Take constant $\delta \in \mathbb{R}$ and define the exponential weight function $\omega_\delta: \mathbb{R} \rightarrow \mathbb{R}$ as
\[ \omega_\delta(s) := e^{\delta \beta(s) s} \]
Note that usually $\delta > 0$ is chosen, like in \cite[Example 3.9]{shift_map}, so that the associated weighted Sobolev space only contains exponentially decaying functions.\\
Take a sequence $(\delta_n) \subset \mathbb{R}$ so that $\delta_m < \delta_n$ for $m<n$. Then $\omega_n :=\omega_{\delta_n}$ are scaled weights: \\
\[
\frac{\omega_m(s)}{\omega_n(s)} = e^{(\delta_m - \delta_n) \beta(s) s} \xrightarrow{s \rightarrow \pm \infty} 0
\]
and this is monotone for at least $\vert s \vert \geq 1$, therefore $T = 1$ and $D = 1$.
Note that even a sequence of exponential weights with changing $\beta$ would be scaled weights.
\end{exa}

\begin{exa}
We also have other sequences of weights which are scaled weights:
\begin{enumerate}[1)]
\item Even exponent polynomial weights $\omega_n(s) := s^{2n} + c$, $c > 0$

\item Polynomial weights $\omega_n(s) := (\beta(s)s)^{l_n} + c$, with $c>0$ and $(l_n) \subset [1,\infty)$ monotone increasing

\item We can also use weights, where a $D > 1$ is neccesary to fulfill conditions ii) and iii) of the definition, that is weights, that are not monotonely increasing/decreasing as $s \rightarrow \infty$.
We can modify existing scaled weights to achieve this, e.g. $\omega_n(s) =(1+c+\cos(s+n))(\beta(s)s)^{l_n} + c$ or a variation of the exponential weight $\omega_n(s) :=(2+\cos(s+n)) e^{\delta_n \beta(s) s}$.
\end{enumerate}
\end{exa}

\begin{prop} \label{wcompact} ~\par
Let $\hat{X}=(X_n, . . . , X_0), \hat{Y}=(Y_{n-1}, . . . , Y_0)$ be Banach spaces, where $\hat{X}$ are reflexive. Let the embedding of $X_k$ in $Y_{k-1}$ be compact for all $k \in \{1, . . . , n\}$ and the embedding of $Y_k$ in $X_k$ be continuous for all $k \in \{0, . . . , n-1\}$, that is
\[ X_n \xhookrightarrow[]{cpt} Y_{n-1} \hookrightarrow X_{n-1} \xhookrightarrow[]{cpt} \: .\: .\: .\: .\: . \hookrightarrow X_1 \xhookrightarrow[]{cpt} Y_0 \hookrightarrow X_0 \]
where all embeddings are continuous.
Let $0 \in I \subset \mathbb{R}$ with the Lebesgue measure.
Let $\hat{p} = (p_n, . . . , p_0)$ and $\tilde{p} = (p_n, . . . , p_1)$.
Let $(\omega_n)$ be scaled weights.
Then the embedding 
\[ W^{n,\hat{p}}_{\omega_n}(I,\hat{X}) \hookrightarrow W^{n-1,\tilde{p}}_{\omega_{n-1}}(I,\hat{Y}) \]
is compact.
\end{prop}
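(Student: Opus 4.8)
The plan is to reduce everything to the compact case already settled by the Aubin--Lions lemma, and to absorb the behaviour at infinity into a uniform tail estimate coming from the decay of the weight quotient. Concretely, take a bounded sequence $(u_l) \subset W^{n,\hat{p}}_{\omega_n}(I,\hat{X})$, say $\Vert u_l \Vert_{W^{n,\hat{p}}_{\omega_n}(I,\hat{X})} \leq C$, and exhaust $I$ by the nested intervals $I_j := I \cap [-j,j]$ (using $0 \in I$, these are nonempty and increase to $I$). The goal is to extract a subsequence that is Cauchy in $W^{n-1,\tilde{p}}_{\omega_{n-1}}(I,\hat{Y})$; since weighted Bochner--Sobolev spaces inherit completeness from the unweighted ones (cf. Remark \ref{weighted_L}), such a subsequence then converges and compactness follows.

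First I would treat the compact parts. On each $I_j$ the continuous positive weights $\omega_n,\omega_{n-1}$ are bounded above and below by positive constants, so on $I_j$ the weighted norms are equivalent to the unweighted ones. Hence the Aubin--Lions lemma, applied on $I_j$ (equivalently its compact closure), yields a compact embedding $W^{n,\hat{p}}_{\omega_n}(I_j,\hat{X}) \hookrightarrow W^{n-1,\tilde{p}}_{\omega_{n-1}}(I_j,\hat{Y})$. Restricting $(u_l)$ to $I_j$ gives a bounded sequence, so by a standard diagonal argument over $j \in \mathbb{N}$ I can pass to a single subsequence, still denoted $(u_l)$, that converges, hence is Cauchy, in $W^{n-1,\tilde{p}}_{\omega_{n-1}}(I_j,\hat{Y})$ for every $j$.

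Next comes the tail estimate, which is where the scaled weights enter. For each derivative order $0 \leq k \leq n-1$ the continuous embedding $X_{n-k} \hookrightarrow Y_{n-k-1}$ gives $\Vert u^{(k)}(t) \Vert_{Y_{n-k-1}} \leq c \Vert u^{(k)}(t) \Vert_{X_{n-k}}$, and writing $\omega_{n-1}(t) = \tfrac{\omega_{n-1}(t)}{\omega_n(t)}\,\omega_n(t)$ I can bound, on $I \setminus I_R$,
\[
\Vert u^{(k)} \Vert_{L^{p_{n-k}}_{\omega_{n-1}}(I\setminus I_R,\,Y_{n-k-1})} \leq c\, r_R\, \Vert u^{(k)} \Vert_{L^{p_{n-k}}_{\omega_n}(I,\,X_{n-k})}, \qquad r_R := \sup_{|t|>R}\frac{\omega_{n-1}(t)}{\omega_n(t)} .
\]
Condition i) in the definition of scaled weights says exactly that $\tfrac{\omega_{n-1}}{\omega_n} \to 0$ as $t \to \pm\infty$, and since this quotient is continuous it follows that $r_R \to 0$ as $R \to \infty$. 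Summing over $k$, the $W^{n-1,\tilde{p}}_{\omega_{n-1}}$-norm of any element over the tail $I\setminus I_R$ is bounded by a constant multiple of $r_R$ times its source norm; applied to the differences $u_l - u_m$ (whose source norms are $\leq 2C$) this produces a tail bound that is uniform in $l,m$ and tends to $0$ as $R \to \infty$. Note that the quasi-monotonicity conditions ii) and iii) are not needed for this argument.

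Finally I would combine the two pieces. Given $\varepsilon > 0$, choose $R = j_0$ so large that the uniform tail bound lies below $\varepsilon/2$, then use the Cauchy property on the compact interval $I_{j_0}$ to make $\Vert u_l - u_m \Vert_{W^{n-1,\tilde{p}}_{\omega_{n-1}}(I_{j_0},\hat{Y})} < \varepsilon/2$ for $l,m$ large; splitting each $L^{p_{n-k}}$-norm over $I_{j_0}$ and $I\setminus I_{j_0}$ (using $(a^p+b^p)^{1/p}\leq a+b$) yields $\Vert u_l - u_m \Vert_{W^{n-1,\tilde{p}}_{\omega_{n-1}}(I,\hat{Y})} < \varepsilon$, so $(u_l)$ is Cauchy and therefore convergent. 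I expect the main obstacle to be precisely this interplay: one must make the tail small uniformly in the sequence index \emph{before} invoking compactness on the compact piece, and the uniformity hinges on upgrading the mere pointwise limits at $\pm\infty$ from condition i) to control of $\sup_{|t|>R}\omega_{n-1}/\omega_n$. Everything else (completeness, restriction, equivalence of norms on compacta) is routine.
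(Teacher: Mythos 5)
Your proposal is correct, and its overall architecture coincides with the paper's: exhaust $I$ by the compacta $I_j$, use equivalence of weighted and unweighted norms on compacta to invoke the Aubin--Lions lemma there, diagonalize, and control the tails $I\setminus I_R$ by the decay of the quotient $\omega_{n-1}/\omega_n$. Two points of genuine divergence are worth recording. First, you run a Cauchy argument in $W^{n-1,\tilde{p}}_{\omega_{n-1}}(I,\hat{Y})$ and appeal to completeness, whereas the paper constructs the limit $u$ explicitly as the pointwise (a.e.) union of the local limits $v_N$ and then needs Fatou's lemma to certify that $u$ lies in the weighted target space before estimating $\Vert u\Vert$ and $\Vert u_j\Vert$ separately on the tail; your route avoids Fatou entirely and only ever estimates differences $u_l-u_m$, whose source norms are uniformly bounded, which is tidier. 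Second, you observe that the tail estimate needs only $r_R:=\sup_{|t|>R}\omega_{n-1}(t)/\omega_n(t)\to 0$, which follows from condition i) of the definition of scaled weights together with continuity of the quotient (the sup is finite for large $R$ and is eventually below any $\epsilon$); the paper instead bounds the quotient on $I\setminus I_K$ by $D\,\omega_{n-1}(K)/\omega_n(K)$ via the quasi-monotonicity conditions ii) and iii) and then lets $K\to\infty$ using i). Your observation that ii) and iii) are dispensable for this proposition is correct and slightly sharpens the statement (those conditions may of course still be wanted elsewhere, but not here). The remaining ingredients you flag as routine --- completeness of the weighted spaces, restriction of weak derivatives to subintervals, the splitting $\Vert\cdot\Vert_{L^p(I)}\le\Vert\cdot\Vert_{L^p(I_{j_0})}+\Vert\cdot\Vert_{L^p(I\setminus I_{j_0})}$ --- are all covered by results already established in the paper, so there is no gap.
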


\begin{proof}
Let $T > 0$ and $D > 0$ be the constants associated to the pair $\omega_n, \omega_{n-1}$ of weights.
Take a bounded sequence in $W^{n,\hat{p}}_{\omega_n}(I,\hat{X})$, i.e. $\Vert u_l \Vert_{W^{n,\hat{p}}_{\omega_n}(I,\hat{X})} \leq \tilde{C}$.
Choose $I_N := [-N,N] \cap I$, $N \in \mathbb{N}$ as an exhaustion by compact sets of $I$. Be $\displaystyle{c_N := \max_{I_N} \omega_n^{-1}(t)}$. Then we have

\[ \begin{aligned}
\Vert u_l \Vert_{W^{n,\hat{p}}(I_N,\hat{X})} &\leq \sum_{k = 0}^n \left( \int_{I_N} \Vert u_l^{(k)}(t) \Vert_{X_{n-k}}^{p_{n-k}} c_N^{p_{n-k}} \: \omega_n^{p_{n-k}}(t) \: dt \right)^{1/p_{n-k}} \\
&\leq c_N \sum_{k = 0}^n \left( \int_I \Vert u_l^{(k)}(t) \Vert_{X_{n-k}}^{p_{n-k}} \omega_n^{p_{n-k}}(t) \: dt \right)^{1/p_{n-k}} \\
&= c_N \Vert u_l \Vert_{W^{n,\hat{p}}_{\omega_n}(I,\hat{X})}\leq c_N \: \tilde{C}
\end{aligned} \]
Using the Aubin-Lions lemma on $u_l\vert_{I_1}^{}$ we get a convergent subsequnce $u_{l_1}\vert_{I_1}^{} \rightarrow v_1$ in $W^{n-1,\tilde{p}}(I_1,\hat{Y})$.
Again using the Aubin-Lions lemma on $u_{l_1}\vert_{I_2}^{}$ we get a convergent subsequence $u_{l_2}\vert_{I_2}^{} \rightarrow v_2$ in $W^{n-1,\tilde{p}}(I_2,\hat{Y})$.
With increasing N we thus get further subsequences  $u_{l_N}\vert_{I_N}^{} \rightarrow v_N$ in $W^{n-1,\tilde{p}}(I_N,\hat{Y})$ and therefore pointwise almost everywhere. 
Note that for $N_1 > N_2$ we know that $v_{N_1}\vert_{I_{N_2}}^{} = v_{N_2}$ in $W^{n-1,\tilde{p}}(I_{N_2},\hat{Y})$. 

Define $u(t) := v_N(t)$ for all $N \in \mathbb{N}$ such that $t \in [-N,N]$, the prospective limit of $u_l$ in $W^{n-1,\tilde{p}}_{\omega_n-1}(I,\hat{Y})$. 
Now take a diagonal subsequence $u_j$ of the subsequences $u_{l_i}$. This diagonal subsequence $u_j$ converges to $u$ pointwise almost everywhere. Using Fatou's lemma on $\Vert u_j^{k} \Vert_{Y_{n-k-1}}^{p_{n-k}}$ for $0 \leq k \leq n-1$ we get that $u \in W^{n-1,\tilde{p}}_{\omega_{n}}(I,\hat{Y})$.

Let $\epsilon > 0$, choose $K > T > 0$ with $K \in \mathbb{N}$ large enough that the following two inequalities hold 
\begin{align*}
&\Vert u \Vert_{W^{n-1,\tilde{p}}_{\omega_{n-1}}(I\backslash I_K,\hat{Y})} \\
&= \sum_{k=0}^{n-1} \left( \int_{I\backslash I_K} \left(\frac{\omega_{n-1}(t)}{\omega_n(t)}\right)^{p_{n-k}} \: \omega_n^{p_{n-k}}(t) \: \Vert u^{(k)}(t) \Vert_{Y_{n-k-1}}^{p_{n-k}} dt \right)^{1/p_{n-k}} \\
&\leq \sum_{k=0}^{n-1}  \frac{\omega_{n-1}(K)}{\omega_n(K)} \left( \int_{I\backslash [-K,K]} \omega_n^{p_{n-k}}(t) \: \Vert u^{(k)}(t) \Vert_{Y_{n-k-1}}^{p_{n-k}} dt \right)^{1/p_{n-k}} \\
&\leq \frac{\omega_{n-1}(K)}{\omega_n(K)} \sum_{k=0}^{n-1} \left( \int_I \omega_n^{p_{n-k}}(t) \: \Vert u^{(k)}(t) \Vert_{Y_{n-k-1}}^{p_{n-k}} dt \right)^{1/p_{n-k}} \\
&\leq \frac{\omega_{n-1}(K)}{\omega_n(K)} \sum_{k=0}^{n-1} \liminf_{j \rightarrow \infty} \left( \int_I \omega_n^{p_{n-k}}(t) \: \Vert u_j^{(k)}(t) \Vert_{Y_{n-k-1}}^{p_{n-k}} dt \right)^{1/p_{n-k}} \\
&\leq \frac{\omega_{n-1}(K)}{\omega_n(K)} \sum_{k=0}^{n-1} \liminf_{j \rightarrow \infty} \left( \int_I \omega_n^{p_{n-k}}(t) \: C_{n-k}^{p_{n-k}} \Vert u_j^{(k)}(t) \Vert_{X_{n-k}}^{p_{n-k}} dt \right)^{1/p_{n-k}} \\
&\leq \frac{\omega_{n-1}(K)}{\omega_n(K)} \sum_{k=0}^{n-1} C_{n-k} \: \tilde{C} < \frac{\epsilon}{4}
\end{align*}
In the above inequality we use Fatou's lemma, the definition of scaled weights and the continuous embedding $X_{n-k} \hookrightarrow Y_{n-k-1}$. The next inequality is similar, except for the use of Fatou's lemma.
\begin{align*}
&\Vert u_j \Vert_{W^{n-1,\tilde{p}}_{\omega_{n-1}}(I\backslash I_K,\hat{Y})} \\
&= \sum_{k=0}^{n-1} \left( \int_{I\backslash I_K} \left(\frac{\omega_{n-1}(t)}{\omega_n(t)}\right)^{p_{n-k}} \: \omega_n^{p_{n-k}}(t) \: \Vert u_j^{(k)}(t) \Vert_{Y_{n-k-1}}^{p_{n-k}} dt \right)^{1/p_{n-k}} \\
&\leq \frac{\omega_{n-1}(K)}{\omega_n(K)} \sum_{k=0}^{n-1} \left( \int_I \omega_n^{p_{n-k}}(t) \: \Vert u_j^{(k)}(t) \Vert_{Y_{n-k-1}}^{p_{n-k}} dt \right)^{1/p_{n-k}} \\
&\leq \frac{\omega_{n-1}(K)}{\omega_n(K)} \sum_{k=0}^{n-1} \left( \int_I \omega_n^{p_{n-k}}(t) \: C_{n-k}^{p_{n-k}} \Vert u_j^{(k)}(t) \Vert_{X_{n-k}}^{p_{n-k}} dt \right)^{1/p_{n-k}} \\
&\leq \frac{\omega_{n-1}(K)}{\omega_n(K)} \sum_{k=0}^{n-1} C_{n-k} \: \tilde{C} < \frac{\epsilon}{4}
\end{align*}
Since $u_j \rightarrow u$ in $W^{n-1,\tilde{p}}(I_K,\hat{Y})$ and $\omega_{n-1}\vert_{I_K}^{}$ is bounded, we know there exists a $J$ large enough that for all $j \geq J$ we have
\[ \Vert u_j - u \Vert_{W^{n-1,\tilde{p}}_{\omega_{n-1}}(I_K,\hat{Y})} \leq C_K \Vert u_j - u \Vert_{W^{n-1,\tilde{p}}(I_K,\hat{Y})} < \frac{\epsilon}{2} \]
Altogether we thus have 
\[ \begin{aligned}
&\Vert u_j - u \Vert_{W^{n-1,\tilde{p}}_{\omega_{n-1}}(I,\hat{Y})} \\ 
&\leq \Vert u_j - u \Vert_{W^{n-1,\tilde{p}}_{\omega_{n-1}}(I_K,\hat{Y})} + \Vert u \Vert_{W^{n-1,\tilde{p}}_{\omega_{n-1}}(I\backslash I_K,\hat{Y})} + \Vert u_j \Vert_{W^{n-1,\tilde{p}}_{\omega_{n-1}}(I\backslash I_K,\hat{Y})} < \epsilon
\end{aligned} \]
\end{proof}

\subsection{Smooth functions are dense in $W^{n,p}(I,\hat{X})$}  

In this subsection we will extend lemma \ref{dense} to Bochner-Sobolev spaces, that is, we will show a version of the Meyers-Serrin theorem and also that compactly supported functions mapping to $X_n$ are dense in $W^{n,p}(I,\hat{X})$.

From now on we will require that $\hat{p} = (p, . . . ,p)$ and that the embeddings of $X_k \hookrightarrow X_{k-1}$ are continuous for all $X_k$ in $\hat{X}$. We will write $p$ instead of $\hat{p}$ from now on.

The first half of this subsection is heavily based on section 4.2 in \cite{Kre}, but there are some small adjustments that have to be made.

\begin{lemma} \label{4.8} \cite[compare Lemma 4.8]{Kre} ~\par
Let $\phi \in C_c^\infty(I,\mathbb{R})$ and $u \in W^{n,\hat{p}}(I,\hat{X})$. Then $\phi u \in W^{n,\hat{p}}(I,\hat{X})$ and the weak derivatives of $\phi u$ are given by the usual Leibniz formula
\[ (\phi u)^{(k)} = \sum_{j=0}^k {k \choose j} \phi^{(k-j)} u^{(j)} \]
\end{lemma}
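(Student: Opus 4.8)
The plan is to reduce the statement to the first-order product rule $(\phi u)' = \phi' u + \phi u'$ and then obtain the full Leibniz formula by induction on $k$, using Pascal's identity. Throughout, the scalar factors $\phi^{(i)}$ are smooth with compact support, hence bounded, and this together with the continuous embeddings $X_k \hookrightarrow X_{k-1}$ handles the integrability needed to place each weak derivative in its correct space. All pairings below are well defined because the cutoff and test functions have compact support, so everything localizes to $\mathrm{supp}\,\phi$.

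First I would establish the base case, namely that for a weakly differentiable $u$ the product $\phi u$ is weakly differentiable with $(\phi u)' = \phi' u + \phi u'$. Given a test function $\psi \in C_c^\infty(I,\mathbb{R})$, the key observation is that $\psi\phi \in C_c^\infty(I,\mathbb{R})$ and that the ordinary Leibniz rule gives $\psi'\phi = (\psi\phi)' - \psi\phi'$. Testing the weak derivative $u'$ against $\psi\phi$ then yields
\[
\int_I \psi'\phi\, u\, dt = \int_I (\psi\phi)'\, u\, dt - \int_I \psi\phi'\, u\, dt = -\int_I \psi\,(\phi u' + \phi' u)\, dt,
\]
which is exactly the assertion that $\phi u' + \phi' u$ is the weak derivative of $\phi u$.

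For the inductive step I would assume the formula holds for some $k$ with $1 \le k < n$ and differentiate term by term. Each summand $\phi^{(k-j)} u^{(j)}$ is again a product of a smooth compactly supported scalar and a weakly differentiable function, since $u^{(j)}$ admits the weak derivative $(u^{(j)})' = u^{(j+1)}$ by the corollary following Lemma \ref{uniqueness}. Applying the base case to each summand, summing (weak derivatives being linear), reindexing, and invoking $\binom{k}{j} + \binom{k}{j-1} = \binom{k+1}{j}$ produces $\sum_{j=0}^{k+1}\binom{k+1}{j}\phi^{(k+1-j)}u^{(j)}$. Uniqueness of weak derivatives (Lemma \ref{uniqueness}) together with the identity $(\phi u)^{(k+1)} = ((\phi u)^{(k)})'$ then identifies this sum with $(\phi u)^{(k+1)}$, closing the induction.

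It remains to confirm membership $\phi u \in W^{n,\hat{p}}(I,\hat{X})$, i.e. that each $(\phi u)^{(k)}$ lies in $L^{p_{n-k}}(I,X_{n-k})$. Since $j \le k$ forces $n-j \ge n-k$, the continuous embedding $X_{n-j}\hookrightarrow X_{n-k}$ together with the boundedness and compact support of $\phi^{(k-j)}$ places each term $\phi^{(k-j)}u^{(j)}$ in $L^{p_{n-k}}(I,X_{n-k})$, and a finite sum stays in this space. I expect the only genuinely delicate points to be the clean handling of the first-order product rule and this final bookkeeping of the target spaces and exponents; the combinatorics of the induction are routine once the base case is in place.
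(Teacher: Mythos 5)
Your proposal is correct and follows essentially the same route as the paper: establish the first-order product rule $(\phi u)' = \phi' u + \phi u'$ by testing $u'$ against $\psi\phi \in C_c^\infty(I,\mathbb{R})$, run an induction on $k$ that combines binomial coefficients via Pascal's identity, and finish with the same bookkeeping using the boundedness of $\phi^{(k-j)}$ and the continuous embeddings $X_{n-j}\hookrightarrow X_{n-k}$. The only (cosmetic) difference is that you differentiate the order-$k$ formula term by term, whereas the paper integrates by parts once and applies the induction hypothesis to $u'\phi + u\phi'$; both organize the same induction.
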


\begin{proof}
First view the case $k=1$. Let $\psi \in C_c^\infty(I,\mathbb{R})$. Then $\phi\psi \in C_c^\infty(I,\mathbb{R})$. We thus have
\[ -\int_I (u \phi)' \psi dt + \int_I u \phi' \psi dt = \int_I u \phi \psi' + u \phi' \psi dt =  \int_I u (\phi\psi)' dt = -\int_I u' \phi\psi dt \]
With the linearity of the integral and the uniqueness of the weak derivative we therefore have $(u\phi)' = u' \phi + u \phi'$.

Inductively we now procede for $k>1$:
\[ \begin{aligned}
\int_I u \phi \psi^{(k)} dt &= -\int_I (u \phi)' \psi^{(k-1)} dt = (-1)^k\int_I (u'\phi + u\phi')^{(k-1)} \psi \: dt \\
&= (-1)^k \int_I \left(\sum_{j=0}^{k-1} {k-1 \choose j} u^{(j+1)} \phi^{(k-1-j)} + u^{(j)} \phi^{(k-j)}\right) \psi \: dt \\
&= (-1)^k \int_I \sum_{j=0}^{k} {k \choose j} u^{(j)} \phi^{(k-j)}  \psi \: dt \\
\end{aligned} \]
Since all derivatives of $\phi \in C_c^\infty(I,\mathbb{R})$ are also functions in $C_c^\infty(I,\mathbb{R})$ and $u^{(j)} \in L^p(I,X_{n-j}) \hookrightarrow L^p(I,X_{n-k})$, we know that $ (u \phi)^{(k)} \in W^{n,\hat{p}}(I,\hat{X})$, as the embedding $L^p(I,X_{n-j}) \hookrightarrow L^p(I,X_{n-k})$ is continuous, since $X_{n-j} \hookrightarrow X_{n-k}$ is continous.
\end{proof}

Let $\eta \in C_c^\infty(\mathbb{R},[0,\infty))$ be a positive mollifier, i.e. $supp(\eta) \subset [-1,1]$ and $\int_\mathbb{R} \eta dt = \int_{[-1,1]} \eta dt = 1$. Let $\eta_\epsilon(t) := \frac{1}{\epsilon} \eta(\frac{t}{\epsilon})$ for any $\epsilon > 0$. It still holds that $\int_\mathbb{R} \eta_\epsilon dt = 1$ and $\eta_\epsilon \in C_c^\infty(\mathbb{R},[0,\infty))$, but $supp(\eta_\epsilon) \subset [-\epsilon,\epsilon]$.
Define the convolution of $u \in L_{loc}^1(I,X)$ with $\eta_\epsilon$ as
\[ \begin{aligned}
u \ast \eta_\epsilon (t) := \int_\mathbb{R} u(t-s) \eta_\epsilon(s) ds = \int_\mathbb{R} u(s) \eta_\epsilon(t-s) ds =: \eta_\epsilon \ast u (t)
\end{aligned} \] 
Since the convolution, in the form of the second integral, only depends on $\eta_\epsilon$ in $t$, we get that $u \ast \eta_\epsilon \in C^\infty(\mathbb{R},X)$.
We use the distributional version of the support of a function u:
\[
supp \: u := I \Big\backslash \bigcup_{U \subset I open, \: u\vert_U^{}=0} U 
\]

\begin{lemma} \label{supp} \cite[Lemma 4.4]{Kre} \[
supp \: u \ast \eta_\epsilon \subset supp \: u + supp \: \eta_\epsilon = supp \: u + [-\epsilon,\epsilon]
\] \end{lemma}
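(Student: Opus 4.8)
The plan is to prove the inclusion by passing to complements: I will show that $u \ast \eta_\epsilon$ vanishes on the open complement of $supp\, u + [-\epsilon,\epsilon]$. First I would record that $supp\, u + [-\epsilon,\epsilon]$ is closed, being the sum of a closed set and the compact set $[-\epsilon,\epsilon]$; hence its complement $V := \mathbb{R}\setminus(supp\, u + [-\epsilon,\epsilon])$ is open, and it suffices to check that $u\ast\eta_\epsilon \equiv 0$ on $V$. Since $u\ast\eta_\epsilon$ is continuous (indeed smooth, as already observed preceding the lemma), vanishing on the open set $V$ forces $V$ to be disjoint from $supp\, u\ast\eta_\epsilon$, which is exactly the claimed inclusion. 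Throughout I would regard $u$ as extended by zero outside $I$, so that the convolution integral over $\mathbb{R}$ makes sense.

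The key preliminary fact I would use is that $u = 0$ almost everywhere on $\mathbb{R}\setminus supp\, u$. By the distributional definition, $\mathbb{R}\setminus supp\, u$ is the union of all open sets $U$ with $u|_U = 0$ a.e.; since $\mathbb{R}$ is second countable, this union can be realised as a countable union of such sets, and a countable union of null sets is null, so $u$ indeed vanishes a.e. on the whole complement.

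With this in hand, I would fix $t \in V$ and consider the slice $[t-\epsilon, t+\epsilon] = \{t - s : s \in [-\epsilon,\epsilon]\}$. If some point $t-s$ with $s\in[-\epsilon,\epsilon]$ were to lie in $supp\, u$, then $t = (t-s)+s \in supp\, u + [-\epsilon,\epsilon]$, contradicting $t\in V$; hence $[t-\epsilon,t+\epsilon]\subset \mathbb{R}\setminus supp\, u$, so by the previous paragraph $u(t-s)=0$ for a.e. $s\in[-\epsilon,\epsilon]$. Because $supp\, \eta_\epsilon \subset [-\epsilon,\epsilon]$, the integrand in $u\ast\eta_\epsilon(t)=\int_\mathbb{R} u(t-s)\eta_\epsilon(s)\,ds$ vanishes for a.e. $s$, and therefore $u\ast\eta_\epsilon(t)=0$. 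As $t\in V$ was arbitrary, $u\ast\eta_\epsilon$ vanishes on all of $V$, completing the argument.

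The main obstacle is the almost-everywhere vanishing statement of the second paragraph: it is the only step that is not pure set arithmetic, and it relies on reducing the defining union of open sets to a countable subfamily via the Lindelöf property of $\mathbb{R}$, together with the observation that the values of $u$ are only defined up to null sets. Everything else is bookkeeping with the translation $t \mapsto t-s$ and the compact support of $\eta_\epsilon$.
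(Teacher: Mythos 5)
Your argument is correct. Note that the paper itself offers no proof of this lemma -- it is quoted from the reference [Kre, Lemma 4.4] -- so there is nothing internal to compare against; what you give is the standard argument, and you handle the two points that actually require care: (i) that $u=0$ almost everywhere on the complement of its distributional support, which you correctly reduce to a countable union of null sets via the Lindel\"of property of $\mathbb{R}$, and (ii) that $supp\,u+[-\epsilon,\epsilon]$ is closed as the sum of a closed set and a compact set, so that its complement $V$ is open and the vanishing of $u\ast\eta_\epsilon$ on $V$ really does yield the claimed inclusion of supports. (Continuity of $u\ast\eta_\epsilon$ is not even needed for that last step: vanishing a.e.\ on the open set $V$ already places $V$ inside the union defining the complement of the distributional support.) Two cosmetic remarks: your proof directly establishes $supp\,(u\ast\eta_\epsilon)\subset supp\,u+[-\epsilon,\epsilon]$, which is the inclusion the paper actually uses; the formally sharper first inclusion $supp\,(u\ast\eta_\epsilon)\subset supp\,u+supp\,\eta_\epsilon$ follows from the identical argument with $[-\epsilon,\epsilon]$ replaced by the compact set $supp\,\eta_\epsilon$, since $\eta_\epsilon$ is continuous and hence vanishes pointwise off its support. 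And since the paper takes $supp\,u$ relative to $I$ while you extend $u$ by zero and work in $\mathbb{R}$, your conclusion is, strictly speaking, an inclusion into $\overline{supp_I\,u}+[-\epsilon,\epsilon]$; this can differ from $supp_I\,u+[-\epsilon,\epsilon]$ only by boundary points of $I$ and is irrelevant in the paper's application, where $u$ is replaced by $\rho_l u$ with compact support in the interior of $I$.
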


\begin{lemma} \label{4.6} \cite[Theorem 4.6]{Kre} ~\par
Let $1 \leq p < \infty$ and $u \in L^p(\mathbb{R},X)$. As $\epsilon \rightarrow 0$ we have that 
\[ \Vert u - u \ast \eta_\epsilon \Vert^{}_{L^p(\mathbb{R},X)} \rightarrow 0 \]
\end{lemma}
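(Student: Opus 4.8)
The plan is to reduce the statement to the continuity of translation in $L^p(\mathbb{R},X)$ and then to establish that continuity via the density of compactly supported smooth functions from Lemma \ref{dense}.

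First I would exploit that $\eta_\epsilon$ is a probability density. Since $\int_\mathbb{R}\eta_\epsilon(s)\,ds = 1$, I can write, for almost every $t$,
\[
u\ast\eta_\epsilon(t) - u(t) = \int_\mathbb{R}\big(u(t-s)-u(t)\big)\,\eta_\epsilon(s)\,ds,
\]
and, applying the triangle inequality for the Bochner integral followed by Jensen's inequality with respect to the probability measure $\eta_\epsilon(s)\,ds$ and the convex function $r\mapsto r^p$, obtain
\[
\Vert u\ast\eta_\epsilon(t)-u(t)\Vert_X^p \leq \int_\mathbb{R}\Vert u(t-s)-u(t)\Vert_X^p\,\eta_\epsilon(s)\,ds.
\]
Integrating over $t$ and applying Tonelli's theorem to the nonnegative integrand, then using $supp\,\eta_\epsilon\subset[-\epsilon,\epsilon]$ together with $\int\eta_\epsilon=1$, gives
\[
\Vert u\ast\eta_\epsilon - u\Vert_{L^p(\mathbb{R},X)}^p \leq \int_{[-\epsilon,\epsilon]}\Vert u_s - u\Vert_{L^p(\mathbb{R},X)}^p\,\eta_\epsilon(s)\,ds \leq \sup_{\vert s\vert\leq\epsilon}\Vert u_s-u\Vert_{L^p(\mathbb{R},X)}^p,
\]
where $u_s(t):=u(t-s)$ denotes the translate. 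Thus it suffices to prove that $\Vert u_s-u\Vert_{L^p(\mathbb{R},X)}\to 0$ as $s\to 0$.

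For this translation-continuity I would run the standard $\delta/3$ argument. Given $\delta>0$, Lemma \ref{dense} furnishes a $\phi\in C_c^\infty(\mathbb{R},X)$ with $\Vert u-\phi\Vert_{L^p}<\delta/3$. Since translation is an isometry of $L^p(\mathbb{R},X)$ (the Lebesgue measure being translation invariant), we have $\Vert u_s-\phi_s\Vert_{L^p}=\Vert u-\phi\Vert_{L^p}<\delta/3$, and decomposing
\[
\Vert u_s-u\Vert_{L^p}\leq\Vert u_s-\phi_s\Vert_{L^p}+\Vert\phi_s-\phi\Vert_{L^p}+\Vert\phi-u\Vert_{L^p}
\]
reduces everything to the middle term. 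Because $\phi$ is continuous with compact support it is uniformly continuous, and all translates $\phi_s$ with $\vert s\vert\leq 1$ are supported in one common compact set; hence $\Vert\phi_s(t)-\phi(t)\Vert_X\to 0$ uniformly in $t$ while being dominated by a fixed integrable function, so $\Vert\phi_s-\phi\Vert_{L^p}\to 0$ as $s\to 0$ by dominated convergence. Choosing $\vert s\vert$ small enough makes this term $<\delta/3$, whence $\Vert u_s-u\Vert_{L^p}<\delta$.

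The main obstacle is the translation-continuity step, and specifically ensuring that the familiar scalar argument transfers verbatim to the vector-valued setting. The only facts used beyond the real-valued case are that the $X$-valued Bochner integral obeys the triangle inequality, that translation is an $L^p(\mathbb{R},X)$-isometry, and that $C_c^\infty(\mathbb{R},X)$ is dense in $L^p(\mathbb{R},X)$ for $1\leq p<\infty$ — all available here, the last precisely by Lemma \ref{dense}. No further structure on $X$ enters, so the estimates are identical to the classical mollification argument once these three ingredients are in place.
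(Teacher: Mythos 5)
Your argument is correct. The paper itself gives no proof of this lemma, simply citing it from \cite[Theorem 4.6]{Kre}, so there is nothing to compare against beyond noting that what you wrote is the standard mollification argument (reduce to $L^p$-continuity of translation via Jensen/Tonelli, then prove translation continuity by a $\delta/3$ density argument), which is presumably also how the cited reference proceeds. All the ingredients you invoke are available in the vector-valued setting: the triangle inequality for the Bochner integral, translation invariance of Lebesgue measure, and the density of $C_c^\infty(\mathbb{R},X)$ from Lemma \ref{dense} (applicable since $\mathbb{R}$ is open). The only point you pass over silently is the joint measurability of $(t,s)\mapsto \Vert u(t-s)-u(t)\Vert_X^p$ needed for Tonelli, which follows from strong measurability of $u$ and is routine; with that remark the proof is complete.
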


\begin{lemma} \label{4.7} \cite[compare Proposition 4.7]{Kre} ~\par
Let $u \in W^{n,p}(I,\hat{X})$ , let $k \leq n$, then
\[   (u \ast \eta_\epsilon)^{(k)}(t) =  u^{(k)} \ast \eta_\epsilon (t) \]
for all $t \in I$ such that $dist(t,\partial I) > \epsilon$.
\end{lemma}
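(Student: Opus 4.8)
The plan is to fix a point $t \in I$ with $dist(t,\partial I) > \epsilon$ and reduce the claim directly to the defining property of the weak derivative. The driving mechanism is the elementary identity $\frac{\partial^k}{\partial t^k}\eta_\epsilon(t-s) = (-1)^k \frac{\partial^k}{\partial s^k}\eta_\epsilon(t-s)$, which lets me trade a $t$-derivative falling on the mollifier for an $s$-derivative at the cost of a sign; integrating by parts $k$ times in $s$ then moves the derivative onto $u$, which is precisely what the weak derivative identity supplies. Concretely, I would first use that $u \ast \eta_\epsilon \in C^\infty(\mathbb{R},X_n)$ (established above) together with differentiation under the Bochner integral to write
\[ (u \ast \eta_\epsilon)^{(k)}(t) = \int_\mathbb{R} u(s) \, \eta_\epsilon^{(k)}(t-s) \, ds. \]

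Since $supp \: \eta_\epsilon \subset [-\epsilon,\epsilon]$ and $dist(t,\partial I) > \epsilon$, the function $\varphi(s) := \eta_\epsilon(t-s)$ lies in $C_c^\infty(\mathring{I},\mathbb{R})$ with $supp \: \varphi \subset [t-\epsilon,t+\epsilon] \subset I$; hence the domain of integration may be replaced by $I$ and $\varphi$ is an admissible test function. Using $\eta_\epsilon^{(k)}(t-s) = (-1)^k \varphi^{(k)}(s)$ and then the definition of the $k$-th weak derivative of $u$ applied to $\varphi$,
\[ \int_I \varphi^{(k)}(s) \, u(s) \, ds = (-1)^k \int_I \varphi(s) \, u^{(k)}(s) \, ds, \]
the two factors of $(-1)^k$ cancel and I obtain $(u \ast \eta_\epsilon)^{(k)}(t) = \int_I \eta_\epsilon(t-s) \, u^{(k)}(s) \, ds = u^{(k)} \ast \eta_\epsilon(t)$, as claimed. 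An equally clean route is to prove only the case $k=1$ in this manner and then iterate it, applying it to $u^{(k-1)} \in W^{1,p}(I;X_{n-k+1},X_{n-k})$.

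The main point requiring care is the vector-valued bookkeeping. The weak derivative $u^{(k)}$ takes values in $X_{n-k}$ whereas $u$ takes values in $X_n$, so in order to invoke the weak derivative identity I must regard $u$ as an element of $L^p(I,X_{n-k})$ via the continuous embedding $X_n \hookrightarrow X_{n-k}$; because this embedding is continuous and linear it commutes with the Bochner integral, and so all of the identities above are to be read as equalities in $X_{n-k}$. The remaining technical obstacle is the justification of differentiation under the integral sign in the Bochner setting: since $\eta_\epsilon$ is smooth with compact support, for $t$ ranging in a small neighbourhood of the fixed point the difference quotients of $\eta_\epsilon^{(j)}(t-\cdot)$ converge uniformly and stay dominated by a fixed $L^1$ function supported on a slightly enlarged window $[t-\epsilon-|h|,t+\epsilon+|h|]$ (where $\Vert u(\cdot)\Vert_{X_n}$ is integrable), so the dominated convergence theorem permits passing the derivative inside and identifies it with the stated convolution of $\eta_\epsilon^{(k)}$. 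The hypothesis $dist(t,\partial I) > \epsilon$ is exactly what keeps this compact integration window inside $I$, which is where $u^{(k)}$ is defined.
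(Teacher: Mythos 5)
Your proposal is correct and follows essentially the same route as the paper: differentiate under the Bochner integral, convert the $t$-derivative of $\eta_\epsilon(t-s)$ into an $s$-derivative with a sign $(-1)^k$, and apply the defining identity of the weak derivative to the test function $\eta_\epsilon(t-\cdot) \in C_c^\infty(I,\mathbb{R})$, the two signs cancelling. Your additional remarks on the dominated-convergence justification and on reading the identities in $X_{n-k}$ via the embedding $X_n \hookrightarrow X_{n-k}$ are sound and in fact supply details the paper leaves implicit.
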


\begin{proof}
Since $dist(t,\partial I) > \epsilon$, we get that $\eta_\epsilon(t-.) \in C_c^\infty(I,\mathbb{R})$. We thus have
\[ \begin{aligned}
(u \ast \eta_\epsilon)^{(k)}(t) &= \partial^k_t \int_I  \eta_\epsilon(t-s) \: u(s) ds = \int_I \partial^k_t \eta_\epsilon(t-s) \: u(s) ds \\
 &= (-1)^k \int_I \partial^k_s \eta_\epsilon(t-s) \: u(s) ds = \int_I  \eta_\epsilon(t-s) \: \partial^k_s u(s) ds \\
 &=  u^{(k)} \ast \eta_\epsilon (t)
\end{aligned} \]
\end{proof}

\begin{lemma} \label{convolution} ~\par
Let $u \in L^p(I,X)$ , let $k \in \mathbb{N}$, then
\[   (\eta_\epsilon \ast u)^{(k)}(t) =   \eta_\epsilon^{(k)} \ast u (t) \]
for all $t \in I$ such that $dist(t,\partial I) > \epsilon$.
\end{lemma}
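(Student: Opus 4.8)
The plan is to justify differentiation under the Bochner integral, moving every $t$-derivative onto the smooth factor $\eta_\epsilon$. For $t$ with $dist(t,\partial I) > \epsilon$ the function $s \mapsto \eta_\epsilon(t-s)$ is supported in $[t-\epsilon,t+\epsilon] \subset I$, so
\[ \eta_\epsilon \ast u(t) = \int_I \eta_\epsilon(t-s)\,u(s)\,ds \]
and only the values of $u$ on a bounded interval enter. Since $[t-\epsilon,t+\epsilon]$ has finite Lebesgue measure, H\"older's inequality gives $u\vert_{[t-\epsilon,t+\epsilon]} \in L^1([t-\epsilon,t+\epsilon],X)$, so $\Vert u(\cdot)\Vert_X$ is integrable there; this is what will make the dominating functions below lie in $L^1$ even though $u$ is only assumed to be in $L^p$.

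First I would settle the case $k=1$ using the difference quotient
\[ \frac{\eta_\epsilon \ast u(t+h) - \eta_\epsilon \ast u(t)}{h} = \int_I \frac{\eta_\epsilon(t+h-s) - \eta_\epsilon(t-s)}{h}\,u(s)\,ds. \]
Choosing $\delta>0$ with $dist(t,\partial I) > \epsilon + \delta$, for all $|h| < \delta$ the integrand is supported in the fixed compact set $K := [t-\epsilon-\delta,\,t+\epsilon+\delta] \subset I$. The scalar mean value theorem applied to $\eta_\epsilon$ bounds the difference quotient in absolute value by $\Vert \eta_\epsilon' \Vert_\infty$, so the integrand is dominated by $\Vert \eta_\epsilon' \Vert_\infty \, \Vert u(\cdot) \Vert_X \, \chi_K \in L^1(I,\mathbb{R})$, uniformly in $h$. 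As $h \to 0$ the scalar difference quotient converges pointwise to $\eta_\epsilon'(t-s)$, hence the $X$-valued integrand converges pointwise in $X$. The vector-valued dominated convergence theorem then permits passing the limit inside the integral, yielding
\[ (\eta_\epsilon \ast u)'(t) = \int_I \eta_\epsilon'(t-s)\,u(s)\,ds = \eta_\epsilon' \ast u(t). \]

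The general case follows by induction on $k$. Since $\eta_\epsilon \in C_c^\infty(\mathbb{R},[0,\infty))$, each derivative $\eta_\epsilon^{(j)}$ is again a smooth scalar function supported in $[-\epsilon,\epsilon]$, so the $k=1$ result applies verbatim with $\eta_\epsilon$ replaced by $\eta_\epsilon^{(k-1)}$, giving $(\eta_\epsilon^{(k-1)} \ast u)' = \eta_\epsilon^{(k)} \ast u$. Combined with the inductive hypothesis $(\eta_\epsilon \ast u)^{(k-1)} = \eta_\epsilon^{(k-1)} \ast u$ this proves $(\eta_\epsilon \ast u)^{(k)} = \eta_\epsilon^{(k)} \ast u$.

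The main obstacle is the rigorous justification of differentiation under the integral sign in the Bochner (strong) setting: one must exhibit a single integrable dominating function valid for all sufficiently small $h$ and verify that the vector-valued dominated convergence theorem applies to the $X$-valued difference quotients, rather than merely to scalar integrals. The only place the hypothesis $dist(t,\partial I) > \epsilon$ enters is this support bookkeeping — it confines the integration region to one fixed compact subset of $I$ as $t$ is perturbed and $h \to 0$, which is precisely what keeps the dominating function in $L^1$.
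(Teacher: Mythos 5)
Your proof is correct and follows the same route as the paper: since $\operatorname{dist}(t,\partial I)>\epsilon$ confines the integrand to a compact subset of $I$, one differentiates under the Bochner integral and moves all $t$-derivatives onto the smooth factor $\eta_\epsilon$. The only difference is that the paper simply asserts the interchange of $\partial_t^k$ and $\int_I$, whereas you supply the justification (difference quotients, a mean value bound, and vector-valued dominated convergence), which is a welcome filling-in of detail rather than a different argument.
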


\begin{proof}
Since $dist(t,\partial I) > \epsilon$, we get that $\eta_\epsilon(t-.) \in C_c^\infty(I,\mathbb{R})$. We thus have
\[ \begin{aligned}
(\eta_\epsilon \ast u)^{(k)}(t) &= \partial^k_t \int_I  \eta_\epsilon(t-s) \: u(s) ds = \int_I \partial^k_t \eta_\epsilon(t-s) \: u(s) ds = \eta_\epsilon^{(k)} \ast u(t) 
\end{aligned} \]
\end{proof}

\begin{thm} [Meyers-Serrin]  \cite[compare Theorem 4.11]{Kre} ~\par
$C^\infty(I,X_n) \cap W^{n,p}(I,\hat{X})$ is dense in $W^{n,p}(I,\hat{X})$ for $1\leq p < \infty$.
\end{thm}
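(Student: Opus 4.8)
The plan is to run the classical Meyers--Serrin ``$H = W$'' argument: localize by a smooth partition of unity and then mollify each piece. All the technical ingredients are already available, namely Lemma \ref{4.8} (the Leibniz rule, so that cutting off by a smooth function stays in the space), Lemma \ref{supp} (support control under convolution), Lemma \ref{4.7} (convolution commutes with weak differentiation at interior points), and Lemma \ref{4.6} (mollification converges in $L^p$). First I would fix $\varepsilon > 0$ and choose a locally finite open cover $\{U_j\}_{j\in\mathbb{N}}$ of $I$ by bounded open intervals with $\overline{U_j}\subset I$, together with a subordinate smooth partition of unity $\{\phi_j\}$, $\phi_j\in C_c^\infty(U_j,\mathbb{R})$, $\sum_j \phi_j \equiv 1$ on $I$. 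By Lemma \ref{4.8} each $\phi_j u$ lies in $W^{n,p}(I,\hat X)$ and has compact support contained in $U_j$, so in particular $\mathrm{dist}(\mathrm{supp}(\phi_j u),\partial I)>0$, and its weak derivatives $(\phi_j u)^{(k)}$ lie in $L^p(I,X_{n-k})$ since $u^{(j')}\in L^p(I,X_{n-j'})\hookrightarrow L^p(I,X_{n-k})$.

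Next I would mollify each $\phi_j u$. Choosing $\epsilon_j>0$ smaller than the distance above, Lemma \ref{supp} guarantees that $(\phi_j u)\ast\eta_{\epsilon_j}$ is again compactly supported inside $U_j$, while the smoothing integral produces an element of $C^\infty(I,X_n)$ because $\phi_j u\in L^p(I,X_n)$. Since the support sits at a positive distance from $\partial I$, both sides of the identity in Lemma \ref{4.7} vanish near $\partial I$ once $\epsilon_j$ is small, so for such $\epsilon_j$ it holds on all of $I$:
\[
\big((\phi_j u)\ast\eta_{\epsilon_j}\big)^{(k)} = (\phi_j u)^{(k)}\ast\eta_{\epsilon_j}, \qquad 0\le k\le n.
\]
Extending by zero to $\mathbb{R}$ and applying Lemma \ref{4.6} to each derivative, I can shrink $\epsilon_j$ further so that
\[
\big\Vert \phi_j u - (\phi_j u)\ast\eta_{\epsilon_j}\big\Vert_{W^{n,p}(I,\hat X)} < \varepsilon\, 2^{-j}.
\]

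Finally I would set $v:=\sum_j (\phi_j u)\ast\eta_{\epsilon_j}$. As the supports stay inside the locally finite family $\{U_j\}$, this sum is locally finite, hence $v\in C^\infty(I,X_n)$. Pointwise almost everywhere one has $u-v=\sum_j\big(\phi_j u-(\phi_j u)\ast\eta_{\epsilon_j}\big)$, each point being hit by only finitely many terms. Since the series of $W^{n,p}$-norms of the summands is dominated by $\sum_j \varepsilon\,2^{-j}=\varepsilon$, completeness of $W^{n,p}(I,\hat X)$ shows the series converges absolutely in $W^{n,p}(I,\hat X)$; a subsequence of partial sums converges a.e., identifying the $W^{n,p}$-limit with $u-v$. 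Hence $\Vert u-v\Vert_{W^{n,p}(I,\hat X)}\le\varepsilon$, and since $v\in C^\infty(I,X_n)\cap W^{n,p}(I,\hat X)$ with $\varepsilon$ arbitrary, density follows.

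The main obstacle I anticipate is the bookkeeping around the infinite sum defining $v$: one must ensure the mollified pieces remain within a locally finite family, so that $v$ is genuinely smooth and the pointwise expansion of $u-v$ is term-by-term finite, and then legitimately pass the triangle inequality through the infinite series, which is precisely where completeness of $W^{n,p}(I,\hat X)$ and the almost-everywhere identification of the limit enter. A secondary technical point, already flagged above, is verifying that the identity of Lemma \ref{4.7}---valid a priori only where $\mathrm{dist}(t,\partial I)>\epsilon$---holds on all of $I$ for the cut-off pieces, which is true because both sides vanish near $\partial I$ once $\epsilon_j$ is small relative to the support distance.
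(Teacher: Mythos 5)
Your proposal is correct and follows essentially the same route as the paper: a locally finite partition of unity subordinate to an exhaustion of $I$, mollification of each piece with supports controlled via Lemma \ref{supp} so the sum stays locally finite, and the geometric series $\varepsilon 2^{-j}$ bound via Lemmata \ref{4.8}, \ref{4.6} and \ref{4.7}. The only slip is that to keep $\mathrm{supp}\big((\phi_j u)\ast\eta_{\epsilon_j}\big)$ inside a locally finite family you need $\epsilon_j$ small relative to $\mathrm{dist}(\mathrm{supp}(\phi_j u),\partial U_j)$ (or, as in the paper, you allow the support to spill into finitely many neighbouring cover elements), not merely relative to $\mathrm{dist}(\mathrm{supp}(\phi_j u),\partial I)$ — a trivially repairable point.
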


\begin{proof}
Be $\delta > 0$ and $u \in W^{n,p}(I,\hat{X})$.

Let $I_l := \{t \in I \, \vert \: l > \vert t \vert , \: dist(t,\partial I) > \frac{1}{l} \}$ be for all $l \in \mathbb{N}$ and set $I_0 = I_{-1} = \emptyset$. Define $U_l := I_{l+1}\backslash \overline{I_{l-1}}$ open subsets of $I$ for all $l \in \mathbb{N}_0$.
Let $\rho_l \in C_c^\infty(I,\mathbb{R})$ be a smooth partition of unity, subordinate to the open cover $U_l$ of $I$.
Let $(\epsilon_l)_{l \in \mathbb{N}}$ be a sequence such that $\epsilon_l \leq \frac{1}{(l+2)^2}$.
Using lemma \ref{supp} we therefore have $supp ( \eta_{\epsilon_l} \ast (\rho^{}_l \:u)) \subset I_{l+2}\backslash I_{l-2}$ since $dist(I_{l+1},\partial I_{l+2}) < \frac{1}{(l+2)^2}$ and $dist(I_{l-2},\partial I_{l-1}) < \frac{1}{(l+2)^2}$. Using the lemmata \ref{4.8}, \ref{4.6} and \ref{4.7} with $\epsilon_l$ small enough for lemma \ref{4.6} we get
\begin{equation} \label{ineqMS} \begin{aligned}
\Vert \eta_{\epsilon_l} \ast (\rho^{}_l \:u) - \rho^{}_l \:u \Vert_{W^{n,p}(I,\hat{X})} &= \sum_{k=0}^n \Vert \partial^k(\eta_{\epsilon_l} \ast (\rho^{}_l \:u)) - \partial^k(\rho^{}_l \:u) \Vert_{L^p(I,X_{n-k})}\\
&= \sum_{k=0}^n \Vert \eta_{\epsilon_l} \ast \partial^k (\rho^{}_l \:u) - \partial^k (\rho^{}_l \:u) \Vert_{L^p(I,X_{n-k})} \\
&< \sum_{k=0}^n \frac{\delta}{(n+1) 2^{l+1}} = \frac{\delta}{2^{l+1}}
\end{aligned} \end{equation}
Choose $\displaystyle{v(t) = \sum_{l=0}^\infty \eta_{\epsilon_l} \ast (\rho^{}_l \:u)}$. This is actually a locally finite sum, since $supp ( \eta_{\epsilon_l} \ast (\rho^{}_l \:u)) \subset I_{l+2}\backslash I_{l-2} \,$ and $I_{l+2}\backslash I_{l-2} \cap I_{l+2+m}\backslash I_{l-2+m} = \emptyset \,$ for all $\vert m \vert > 4$, $m\in \mathbb{Z}$. Hence $v \in C^\infty(I,X_n)$ is a locally finite sum of functions in $C^\infty(I,X_n)$.
\[ \begin{aligned}
\Vert v - u \Vert_{W^{n,p}(I,\hat{X})} &= \Vert  \sum_{l=0}^\infty \eta_{\epsilon_l} \ast (\rho^{}_l \:u) - \rho^{}_l \:u \Vert_{W^{n,p}(I,\hat{X})} \\
&\leq  \sum_{l=0}^\infty \Vert \eta_{\epsilon_l} \ast (\rho^{}_l \:u) - \rho^{}_l \:u \Vert_{W^{n,p}(I,\hat{X})} \\
&< \sum_{l=0}^\infty \frac{\delta}{2^{l+1}} = \delta
\end{aligned} \]
We therefore get that $v \in W^{n,p}(I,\hat{X})$ and since we can construct such a $v$ for any $\delta > 0$, we get a sequence of smooth functions converging to $u$ in $W^{n,p}(I,\hat{X})$. 
\end{proof}

\begin{cor} \label{weightedMS} ~\par
$C^\infty(I,X_n) \cap W_\omega^{n,p}(I,\hat{X})$ is dense in $W_\omega^{n,p}(I,\hat{X})$ for a continuous weight $\omega$ and ${1\leq p < \infty}$.
\end{cor}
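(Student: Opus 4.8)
The plan is to reuse the construction from the Meyers-Serrin theorem verbatim and to control the weight \emph{locally}, exploiting that every building block of the approximating function has compact support. Recall that by Remark \ref{weighted_L} the space $L^p_\omega(I,X)$ is nothing but a Bochner space for the $\sigma$-finite measure $\nu$ with $d\nu = \omega^p\,dt$, so all the purely measure-theoretic facts (completeness, density of $C_c^\infty$, the scalar H\"older inequality) carry over. What does \emph{not} transfer directly is the smoothing itself: the weak derivative in $W^{n,p}_\omega(I,\hat{X})$ is still the Lebesgue weak derivative, and mollification is tied to the translation structure of $\mathbb{R}$ and to Lebesgue measure, not to $\nu$. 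Hence I would not try to read the statement off the measure-change viewpoint, but instead run the proof of the Meyers-Serrin theorem again and absorb the weight on each compact piece.

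Concretely, I would keep the same exhaustion $I_l$, the same partition of unity $\rho_l$ subordinate to $U_l = I_{l+1}\setminus\overline{I_{l-1}}$, and the same candidate $v = \sum_{l=0}^\infty \eta_{\epsilon_l}\ast(\rho_l\,u)$. By Lemma \ref{supp} both $\rho_l\,u$ and $\eta_{\epsilon_l}\ast(\rho_l\,u)$ are supported in the fixed compact set $K_l := \overline{I_{l+2}\setminus I_{l-2}} \subset I$, so the difference $\eta_{\epsilon_l}\ast(\rho_l\,u) - \rho_l\,u$ is supported in $K_l$ as well. Since $\omega$ is continuous and positive it satisfies a two-sided bound $0 < c_l \leq \omega \leq M_l$ on the compact set $K_l$, and therefore the weighted and unweighted $W^{n,p}$-norms of any function supported in $K_l$ are equivalent, with $\Vert\,\cdot\,\Vert_{W^{n,p}_\omega(I,\hat{X})} \leq M_l\,\Vert\,\cdot\,\Vert_{W^{n,p}(I,\hat{X})}$. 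In particular the lower bound $c_l$ guarantees $\rho_l\,u \in W^{n,p}(I,\hat{X})$ in the \emph{unweighted} sense on $K_l$, so the unweighted estimate (\ref{ineqMS}) obtained from Lemmata \ref{4.8}, \ref{4.6} and \ref{4.7} is available. I would then simply shrink each $\epsilon_l$ further so that $M_l\,\Vert \eta_{\epsilon_l}\ast(\rho_l\,u) - \rho_l\,u \Vert_{W^{n,p}(I,\hat{X})} < \delta/2^{l+1}$, which by the norm comparison yields the weighted estimate $\Vert \eta_{\epsilon_l}\ast(\rho_l\,u) - \rho_l\,u \Vert_{W^{n,p}_\omega(I,\hat{X})} < \delta/2^{l+1}$. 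The sum remains locally finite, so $v \in C^\infty(I,X_n)$, and summing over $l$ gives $\Vert v - u\Vert_{W^{n,p}_\omega(I,\hat{X})} < \delta$ exactly as before.

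I expect the main obstacle to be conceptual rather than computational: resisting the temptation to invoke the measure-change remark as a black box, and instead isolating the single point where the weight matters, namely the two-sided bound on each $K_l$. The delicate ingredient is the lower bound $c_l > 0$, which is what lets $\rho_l\,u$ land in the unweighted Sobolev space so that Lemma \ref{4.6} can be applied on the piece; the upper bound $M_l$ only costs an $l$-dependent constant that is harmlessly swallowed by choosing $\epsilon_l$ smaller. I would explicitly flag that continuity together with strict positivity of $\omega$ is used here (as is the case for the scaled weights of Example \ref{exweights}); a merely nonnegative continuous weight that vanishes on $K_l$ would break the lower bound and require a separate argument, so I would either assume positivity or restrict the statement accordingly.
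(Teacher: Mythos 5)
Your proof follows the paper's argument essentially verbatim: the same exhaustion, partition of unity and candidate $v$, with the weight absorbed on each compact piece via its supremum there and the $\epsilon_l$ shrunk accordingly. Your additional observation that a positive lower bound for $\omega$ on each $K_l$ is needed to place $\rho_l\,u$ in the unweighted space before invoking Lemma \ref{4.6} is correct, and is a point the paper's proof leaves implicit (it is harmless for the strictly positive scaled weights used later, but the corollary as stated for weights valued in $[0,\infty)$ does require it).
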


\begin{proof}
The proof of this corollary works entirely analogous to the proof of Meyers-Serrin, only the inequality (\ref{ineqMS}) needs to adjusted.

Using the fact that $\eta_{\epsilon_l} \ast (\rho^{}_l \:u) - \rho^{}_l \:u$ has support in $I_{l+2}\backslash I_{l-2}$ and therefore only in $U_{l-1} \cup U_l \cup U_{l+1} =: W$, the analog of inequality (\ref{ineqMS}) therefore looks like
\begin{align*}
&\Vert \eta_{\epsilon_l} \ast (\rho^{}_l \:u) - \rho^{}_l \:u \Vert_{W_\omega^{n,p}(I,\hat{X})} \\
&= \sum_{k=0}^n \Vert \eta_{\epsilon_l} \ast \partial^k (\rho^{}_l \:u) - \partial^k (\rho^{}_l \:u) \Vert_{L^p_\omega(I,X_{n-k})} \\
&= \sum_{k=0}^n \Vert \omega (\eta_{\epsilon_l} \ast \partial^k (\rho^{}_l \:u) - \partial^k (\rho^{}_l \:u)) \Vert_{L^p(I,X_{n-k})} \\
&= \sum_{k=0}^n \Vert \sum_{j=l-1}^{l+1} \rho_j \omega (\eta_{\epsilon_l} \ast \partial^k (\rho^{}_l \:u) - \partial^k (\rho^{}_l \:u)) \Vert_{L^p(I,X_{n-k})} \\
&\leq \sum_{k=0}^n \sum_{j=l-1}^{l+1} \sup_W( \rho_j \omega ) \Vert \eta_{\epsilon_l} \ast \partial^k (\rho^{}_l \:u) - \partial^k (\rho^{}_l \:u) \Vert_{L^p(I,X_{n-k})} \\
&\leq \sum_{k=0}^n \sup_W( \omega ) \Vert \eta_{\epsilon_l} \ast \partial^k (\rho^{}_l \:u) - \partial^k (\rho^{}_l \:u) \Vert_{L^p(I,X_{n-k})} \\
&< \sum_{k=0}^n \frac{\delta}{(n+1) 2^{l+1}} = \frac{\delta}{2^{l+1}}
\end{align*}
\end{proof}

\begin{lemma} \label{Ccdense} ~\par
$C_c^\infty(I,X_n)$ is dense in $W^{n,p}_{\omega_n}(I,\hat{X})$ for a continuous weight function $\omega_n$ and ${1\leq p < \infty}$.
\end{lemma}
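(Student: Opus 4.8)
The plan is to combine the weighted Meyers--Serrin corollary (Corollary \ref{weightedMS}) with a cutoff at infinity, performed in that order. The reason the order matters is that Meyers--Serrin only produces functions in $C^\infty(I,X_n)$ whose support need not be compact (the locally finite sum built in its proof is not compactly supported), whereas multiplying by a cutoff yields compact support but would destroy weak smoothness if applied to a merely $W^{n,p}_{\omega_n}$ function. So I would first smooth and then cut off, since multiplication by a compactly supported smooth scalar preserves $C^\infty$. I carry this out for $I=\mathbb{R}$, which is the case relevant to the scaled weights $\omega_n$.

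First, fix $u \in W^{n,p}_{\omega_n}(I,\hat{X})$ and $\delta > 0$. By Corollary \ref{weightedMS} there is $v \in C^\infty(I,X_n) \cap W^{n,p}_{\omega_n}(I,\hat{X})$ with $\Vert u - v\Vert_{W^{n,p}_{\omega_n}(I,\hat{X})} < \delta/2$, so it suffices to approximate the smooth $v$ by compactly supported smooth functions. I then introduce cutoffs: fixing $\chi \in C_c^\infty(\mathbb{R},[0,1])$ with $\chi \equiv 1$ on $[-1,1]$ and $\mathrm{supp}\,\chi \subset [-2,2]$, set $\chi_R(t) := \chi(t/R)$. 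Then $\chi_R \equiv 1$ on $[-R,R]$, each $\chi_R^{(m)}$ with $m \geq 1$ is supported in $\{R \leq |t| \leq 2R\}$, and $\Vert \chi_R^{(m)}\Vert_\infty = R^{-m}\Vert \chi^{(m)}\Vert_\infty$ is uniformly bounded in $R$. Since $v$ is smooth, $\chi_R v \in C_c^\infty(I,X_n)$, and by the Leibniz rule (Lemma \ref{4.8}, which here is just the classical product rule) its $k$-th weak derivative is $\sum_{j=0}^k \binom{k}{j}\chi_R^{(k-j)} v^{(j)}$.

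The crux is to show $\Vert \chi_R v - v\Vert_{W^{n,p}_{\omega_n}(I,\hat{X})} \to 0$ as $R \to \infty$. Writing $(\chi_R v - v)^{(k)} = (\chi_R - 1)v^{(k)} + \sum_{j=0}^{k-1}\binom{k}{j}\chi_R^{(k-j)}v^{(j)}$, the first term is supported in $\{|t| \geq R\}$ with $|\chi_R - 1| \leq 1$, so its weighted $L^p$ norm is at most the tail $\Vert v^{(k)}\Vert_{L^p_{\omega_n}(\{|t| \geq R\},X_{n-k})}$; each remaining summand is supported in $\{R \leq |t| \leq 2R\}$ with $\chi_R^{(k-j)}$ uniformly bounded, hence its weighted $L^p$ norm is controlled by a constant times $\Vert v^{(j)}\Vert_{L^p_{\omega_n}(\{|t| \geq R\},X_{n-j})}$. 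Because each $v^{(j)} \in L^p_{\omega_n}(I,X_{n-j})$, all these tails vanish as $R \to \infty$ by absolute continuity of the integral (dominated convergence). Summing over the finitely many $0 \leq k \leq n$ and choosing $R$ large gives $\Vert \chi_R v - v\Vert < \delta/2$, whence $\Vert u - \chi_R v\Vert < \delta$ with $\chi_R v \in C_c^\infty(I,X_n)$.

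I expect the main obstacle to be precisely this last estimate, and within it the interaction between the cutoff's derivatives and the weight: the transition region $\{R \leq |t| \leq 2R\}$ marches off to infinity, where $\omega_n$ may grow without bound, so one must be sure it contributes nothing in the limit. The resolution is the observation that the relevant quantities are tails of the finite integrals $\int_I \Vert v^{(j)}(t)\Vert_{X_{n-j}}^p\,\omega_n(t)^p\,dt$, which therefore tend to $0$ regardless of the growth of $\omega_n$, while the cutoff-derivative prefactors $\Vert \chi_R^{(k-j)}\Vert_\infty$ remain uniformly bounded (indeed $O(R^{-(k-j)})$). This is exactly why smoothing before cutting is essential, and why the weighted tails rather than any pointwise bound on $\omega_n$ drive the argument.
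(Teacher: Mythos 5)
Your proposal is correct and follows essentially the same route as the paper: apply the weighted Meyers--Serrin corollary first, then multiply by a cutoff and control the error via the Leibniz rule and the vanishing of the weighted tails $\Vert v^{(j)}\Vert_{L^p_{\omega_n}(I\setminus I_K,\,\cdot\,)}$. The only differences are cosmetic --- you use a dilated cutoff $\chi(t/R)$ with uniformly bounded derivatives and invoke dominated convergence directly, where the paper uses smoothed indicators of a compact exhaustion and detours through compactly supported approximants $\phi_k$ of the $u^{(k)}$ to see that the tails are small.
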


\begin{proof}
Let $u \in W^{n,p}_{\omega_n}(I,\hat{X})$, w.l.o.g. $u \in C^\infty(I,X_n) \cap W^{n,p}_{\omega_n}(I,\hat{X})$ using corollary \ref{weightedMS}.

Take a compact exhaustion $(I_K)$ of I and smoothed indicator functions $\psi_K$ subordinate to $I_K$, i.e. $\psi_K \in C_c^\infty(I,[0,1])$ such that $supp \:\psi_K \subset I_{K+1}$ and $\psi_K\vert^{}_{I_K} =1$.  Therefore also $\psi_K^{(k)}\vert^{}_{I_K} =0$ and $supp (\psi_K^{(k)}) \subset \bar{I}_{K+1} \backslash \mathring{I}_K$.

Be $\delta > 0$. Choose a $\tilde{\delta} > 0$ so that it satisfies the inequality 
\[ \sum_{k=0}^n \tilde{\delta} + \sum_{k=0}^n  \sum_{j=0}^{k-1} {k \choose j} \max_{I_{K+1}\backslash \mathring{I}_K}(\psi_K^{(k-j)}) \: \tilde{\delta} \leq \delta\]
for all $K > K_0$.
Using lemma \ref{dense}, there exist $\phi_k \in C_c^\infty(I,X_{n-k})$, such that $\Vert u^{(k)} - \phi_k \Vert_{L^p(I,X_{n-k})} < \tilde{\delta}$. Then for a $K > K_0$ large enough we get that $supp\: \phi_k \subset I_K$ for all $0 \leq k \leq n$. Using lemma \ref{4.8} we thus have
\begin{align*}
&\:\Vert u - \psi_K u\Vert_{W^{n,p}_{\omega_n}(I,E_n)} = \sum_{k=0}^n \Vert u^{(k)} - (\psi_K u)^{(k)}\Vert_{L^p_{\omega_n}(I,E_{n-k})} \\
&= \sum_{k=0}^n \left\Vert u^{(k)} - \sum_{j=0}^k {k \choose j} \psi_K^{(k-j)} u^{(j)}\right\Vert_{L^p_{\omega_n}(I,E_{n-k})} \\ 
&\leq \sum_{k=0}^n \left\Vert u^{(k)} - \sum_{j=0}^{k-1} {k \choose j} \psi_K^{(k-j)} u^{(j)} - \psi_K u^{(k)} \right\Vert_{L^p_{\omega_n}(I\backslash I_K,E_{n-k})} \\ 
&\:\:\:\:\:\:\:\:\:\:\:\:\:\:\:\:\:\:\:\:+ \sum_{k=0}^n \Vert u^{(k)} - 0 - 1\cdot u^{(k)}\Vert_{L^p_{\omega_n}(I_K,E_{n-k})} \\ 
&\leq \sum_{k=0}^n \Vert u^{(k)} - \psi_K u^{(k)} \Vert_{L^p_{\omega_n}(I\backslash I_K,E_{n-k})} \\
&\:\:\:\:\:\:\:\:\:\:\:\:\:\:\:\:\:\:\:\: + \sum_{k=0}^n  \sum_{j=0}^{k-1} \left\Vert {k \choose j} \psi_K^{(k-j)} u^{(j)} \right\Vert_{L^p_{\omega_n}(I\backslash I_K,E_{n-k})} \\
&\leq \sum_{k=0}^n \Vert u^{(k)} -0 \Vert_{L^p_{\omega_n}(I\backslash I_K,E_{n-k})} \\
&\:\:\:\:\:\:\:\:\:\:\:\:\:\:\:\:\:\:\:\: + \sum_{k=0}^n  \sum_{j=0}^{k-1} {k \choose j} \max_{I_{K+1}\backslash \mathring{I}_K}(\psi_K^{(k-j)}) \Vert u^{(j)} \Vert_{L^p_{\omega_n}(I\backslash I_K,E_{n-k})} \\
&= \sum_{k=0}^n \Vert u^{(k)} - \phi_k \Vert_{L^p_{\omega_n}(I\backslash I_K,E_{n-k})} \\
&\:\:\:\:\:\:\:\:\:\:\:\:\:\:\:\:\:\:\:\: + \sum_{k=0}^n  \sum_{j=0}^{k-1} {k \choose j} \max_{I_{K+1}\backslash \mathring{I}_K}(\psi_K^{(k-j)}) \Vert u^{(j)} - \phi_k \Vert_{L^p_{\omega_n}(I\backslash I_K,E_{n-k})} \\
&<  \sum_{k=0}^n \tilde{\delta} + \sum_{k=0}^n  \sum_{j=0}^{k-1} {k \choose j} \max_{I_{K+1}\backslash \mathring{I}_K}(\psi_K^{(k-j)}) \: \tilde{\delta} \leq \delta
\end{align*}

As $u \in C^\infty(I,X_n) \cap W^{n,p}_{\omega_n}(I,\hat{X})$ we have $\psi_K u \in C_c^\infty(I,X_n)$ converging to $u$. Hence $C_c^\infty(I,X_n)$ is dense in $C^\infty(I,X_n) \cap W^{n,p}_{\omega_n}(I,\hat{X})$ and therefore, using corollary \ref{weightedMS}, dense in $W^{n,p}_{\omega_n}(I,\hat{X})$.
\end{proof}

\subsection{sc-structure with Bochner-Sobolev spaces}  
We use the notion of sc-smoothness as outlined in \cite{scdef} by Hofer, Wysocki and Zehnder. 
Let E be an sc-Banach space, i.e. the Banach space E possesses an sc-structure of Banach spaces $E_n$. $I \subset \mathbb{R}$ has its trivial, that is constant, sc-structure.

\begin{lemma} \label{sc=c} ~\par
Let $I \subset \mathbb{R}$ open and let $E$ be an sc-Banach space. Then $f: I \rightarrow E$ is sc-smooth, if and only if $f: I \rightarrow E_n$ is smooth for all $E_n$ in the sc-structure of $E$.
\end{lemma}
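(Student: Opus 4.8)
The plan is to reduce everything to the Hofer--Wysocki--Zehnder definitions unwound over a \emph{trivial} sc-domain and then to run an induction on the order of differentiability via the tangent map. Because the domain carries the constant sc-structure, the filtration $I_m = I$ forces an sc$^0$ map $f$ to take values in $E_\infty = \bigcap_m E_m$ and to be continuous as a map $I \to E_m$ for every $m$; conversely, continuity into each $E_m$ is exactly the sc$^0$ condition. So the base case ``sc$^0$ $\iff$ $C^0$ into every $E_m$'' is essentially definitional. To make the induction close I would prove the slightly more general statement for open $U \subseteq \mathbb{R}^d$ with the constant sc-structure, since the tangent domain $TU = U \times \mathbb{R}^d \subseteq \mathbb{R}^{2d}$ again carries the constant sc-structure; the stated lemma is the case $d=1$.

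Next I would treat sc$^1$. Over the constant domain the definition specializes: at a level-$1$ point $t \in I_1 = I$ one asks for $Df(t) \in L(\mathbb{R}, E_0)$, i.e.\ a vector $f'(t) := Df(t)(1) \in E_0$, with the difference quotient converging in $\Vert \cdot \Vert_{E_0}$; this is precisely ordinary differentiability of $f : I \to E_0$ with derivative $f'$. The requirement that the tangent map $Tf(t,s) = (f(t),\, s\,f'(t))$ be sc$^0$ into $TE$, whose level-$m$ part is $E_{m+1} \times E_m$, unwinds to: $f : I \to E_{m+1}$ continuous and $f' : I \to E_m$ continuous, for every $m$.

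The crucial point, and the step I expect to be the main obstacle, is upgrading differentiability into $E_0$ to differentiability into each $E_m$. Here I would invoke the fundamental theorem of calculus for the Bochner integral: from sc$^1$ we know $f : I \to E_0$ is $C^1$, so $f(t) - f(t_0) = \int_{t_0}^{t} f'(\tau)\,d\tau$ holds in $E_0$; since the inclusion $E_m \hookrightarrow E_0$ is bounded and linear it commutes with the Bochner integral, and $f'$ is continuous into $E_m$, so the same identity holds in $E_m$ and exhibits $f$ as a $C^1$ map $I \to E_m$ with derivative $f'$. Thus sc$^1$ is equivalent to ``$f : I \to E_m$ is $C^1$ for every $m$.''

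Finally I would induct in the general Euclidean form. Here sc$^{k+1}$ means sc$^1$ together with $Tf$ being sc$^k$; applying the induction hypothesis to $Tf : TU \to TE$ shows that $Tf$ is $C^k$ into $(TE)_n = E_{n+1} \times E_n$ for all $n$, which decomposes (using that the derivative is linear in the fibre variable with smooth coefficients) into ``$f : U \to E_{n+1}$ is $C^k$'' and ``the partials $\partial_i f : U \to E_n$ are $C^k$'' for all $n$. Combining the $C^k$ regularity of the partials into each $E_n$ with the level-$0$ differentiability from sc$^1$ and the FTC upgrade above yields that $f : U \to E_n$ is $C^{k+1}$ for every $n$; the converse runs the same way, reconstructing the tangent map from the classical partials and re-applying the induction hypothesis. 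Intersecting over all $k$, sc-smoothness is equivalent to $f : I \to E_n$ being $C^\infty$ for every $n$, which is the claim.
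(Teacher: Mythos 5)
Your proof is correct, but it takes a genuinely different route from the paper. The paper disposes of the lemma in two lines by citing the general Hofer--Wysocki--Zehnder regularity results: their Proposition~2.14 (an $sc^k$ map is $C^k$ from level $n+k$ to level $n$, and since $I_{n+k}=I$ for the constant structure this collapses to smoothness into every $E_n$) for the forward direction, and their Proposition~2.15 for the converse. You instead re-derive exactly these facts from the definition of sc-differentiability, by induction on the order via the tangent map. Your argument is sound: the base case and the unwinding of $sc^1$ over a trivial domain are as you say; the Bochner/Riemann-integral representation $f(t)-f(t_0)=\int_{t_0}^t f'(\tau)\,d\tau$, pushed through the injective continuous inclusion $E_m\hookrightarrow E_0$ and combined with $f(I)\subseteq E_\infty$, correctly upgrades differentiability from level $0$ to every level; and your decision to prove the statement for open $U\subseteq\mathbb{R}^d$ with the constant structure is necessary and sufficient for the induction through $TU=U\times\mathbb{R}^d$ to close. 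What your approach buys is self-containedness and transparency: it isolates the integral-representation trick that is the real analytic content of the converse, and it makes visible why no regularity is lost between levels here --- the level shift in the general HWZ propositions (one sc-level traded per classical derivative) is invisible precisely because the domain filtration is constant, so the finite-order equivalence ``$sc^k$ iff $C^k$ into every $E_n$'' holds with no off-by-one. What the paper's citation buys is brevity, plus automatic handling of the boundedness and extension conditions on the derivative that are trivial for a one-dimensional constant domain but would matter in general.
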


\begin{proof}
Be f sc-smooth. Using proposition 2.14 in \cite{scdef}, we get that for all $k,n \in \mathbb{N}$ we have that $f: I_{n+k} = I \rightarrow E_n$ is of class $C^k$.

Be $f: I \rightarrow E_n$ smooth for all $k \in \mathbb{N}$. Then f is $sc^0$, since $f: I_n = I \rightarrow E_n$ are all continuous. Using proposition 2.15 in \cite{scdef} on $f: I_{n+k} = I \rightarrow E_n$, we get that f is sc-smooth.
\end{proof}

Let us denote $sC^\infty(F,E) = \{f:F \rightarrow E \:\text{sc-smooth}\}$ and $\hat{E}_n := (E_n, . . . E_0)$.

\begin{lemma} \label{ccdense} ~\par
Let E be an sc-Banach space. Then $C_c^\infty(I,E_\infty)$ is dense in $W^{n,p}_{\omega_n}(I,\hat{E}_n)$ for all $n \in \mathbb{N}$.
\end{lemma}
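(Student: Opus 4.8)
The plan is to deduce this from Lemma \ref{Ccdense} by a two-step reduction, the second step being a purely target-space approximation. Applying Lemma \ref{Ccdense} with $\hat{X} = \hat{E}_n = (E_n,\dots,E_0)$ and the weight $\omega_n$ (which is continuous and positive, being a scaled weight, and whose embeddings $E_k \hookrightarrow E_{k-1}$ are continuous by the sc-structure), we immediately get that $C_c^\infty(I,E_n)$ is dense in $W^{n,p}_{\omega_n}(I,\hat{E}_n)$. Since $E_\infty \subset E_n$ gives $C_c^\infty(I,E_\infty)\subset C_c^\infty(I,E_n)$, by transitivity of density it suffices to approximate an arbitrary $\phi \in C_c^\infty(I,E_n)$ in the $W^{n,p}_{\omega_n}$-norm by elements of $C_c^\infty(I,E_\infty)$.

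First I would reduce the weighted Sobolev estimate to a uniform one. Fix a compact interval $K \subset I$ containing $\mathrm{supp}\,\phi$ in its interior, and restrict attention to approximants also supported in $K$. On such functions $\omega_n$ is bounded by $\sup_K \omega_n < \infty$, and using the continuous embeddings $E_n \hookrightarrow E_{n-k}$ one gets, for any $\psi$ supported in $K$,
\[
\Vert \psi - \phi \Vert_{W^{n,p}_{\omega_n}(I,\hat{E}_n)} \;\le\; C \sum_{k=0}^n \sup_{t\in K}\Vert \psi^{(k)}(t)-\phi^{(k)}(t)\Vert_{E_n},
\]
with $C = C(K,\omega_n,p,n)$. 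Hence it is enough to approximate $\phi$, together with its derivatives up to order $n$, uniformly on $K$ in the norm of $E_n$ by functions in $C_c^\infty(I,E_\infty)$ supported in $K$.

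To build such approximants I would separate the $t$-dependence from the target vectors, so that the density of $E_\infty$ in $E_n$ (part of the sc-structure) can be exploited. Choose $L$ larger than the length of $K$, periodize $\phi$ to a map in $C^\infty(\mathbb{R}/L\mathbb{Z},E_n)$, and expand it in a Fourier series $\sum_k c_k\, e^{2\pi i k t/L}$ with $c_k \in E_n$ (passing to real and imaginary parts to remain in the real Banach space). Integration by parts shows $\Vert c_k\Vert_{E_n}$ decays faster than any power of $\vert k \vert$, so the partial sums and their first $n$ termwise derivatives converge uniformly; multiplying by a cutoff $\chi \in C_c^\infty(I,\mathbb{R})$ equal to $1$ on $\mathrm{supp}\,\phi$ yields trigonometric polynomials $\chi(t)\sum_{\vert k\vert\le N} c_k e^{2\pi i k t/L}$ converging to $\phi$ in the $C^n(K,E_n)$-norm. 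Finally I would replace each coefficient $c_k \in E_n$ by a nearby $\tilde{c}_k \in E_\infty$; since
\[
\Big\Vert \chi(t)\, e^{2\pi i k t/L}(\tilde{c}_k - c_k) \Big\Vert_{C^n(K,E_n)} = \Vert \chi\, e^{2\pi i k t/L}\Vert_{C^n}\,\Vert \tilde{c}_k - c_k\Vert_{E_n},
\]
the resulting function lies in $C_c^\infty(I,E_\infty)$ and stays arbitrarily close to $\phi$ in $C^n(K,E_n)$, hence in $W^{n,p}_{\omega_n}$.

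The main obstacle is precisely this separation step: convolution and the Meyers--Serrin/cutoff machinery of the previous lemmas all preserve the target space $E_n$ and never produce $E_\infty$-valued maps, so the only place density of $E_\infty$ enters is in approximating the target \emph{vectors}, and this must be done without destroying smoothness or compact support. The Fourier expansion is the device that decouples these two issues; the one genuine point to check carefully is the $C^n$-convergence of the vector-valued Fourier series, which follows from the rapid decay of $\Vert c_k\Vert_{E_n}$ obtained by repeated integration by parts.
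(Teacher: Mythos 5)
Your proof is correct, but it takes a genuinely different route from the paper. Both arguments begin the same way, invoking Lemma \ref{Ccdense} to reduce to approximating a fixed $\phi \in C_c^\infty(I,E_n)$, and both reduce the weighted norm to estimates on a fixed compact set containing the support. From there the paper proceeds by choosing a (not necessarily continuous) family $u_l$ with $u_l(t)\in E_\infty$ and $\Vert u_l(t)-\phi(t)\Vert_{E_n}<\tilde\delta$ uniformly, and then mollifies: $\eta_\epsilon\ast u_l$ is smooth and $E_\infty$-valued, the derivatives are thrown onto $\eta_\epsilon$ via Lemma \ref{convolution}, and the two error terms are controlled by first sending $\epsilon\to 0$ and then $l\to\infty$. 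You instead separate the $t$-dependence from the target vectors by a vector-valued Fourier expansion, truncate, and perturb the finitely many coefficients into $E_\infty$. Your route buys explicit approximants (finite sums of smooth scalar functions times fixed vectors of $E_\infty$) and sidesteps the question of whether a pointwise-chosen $E_\infty$-valued approximant is strongly measurable, which the paper's mollification step implicitly requires; the paper's route stays within the convolution machinery already built in Lemmas \ref{supp}--\ref{convolution} and does not need the compact-interval $C^n$-reduction. Two small points you should make explicit: the cutoff $\chi$ must be chosen with $\mathrm{supp}\,\chi\subset K$ so that the approximants remain supported in $K$ (possible since $\mathrm{supp}\,\phi$ lies in the interior of $K$), and the identification of the uniformly convergent vector-valued Fourier series with the periodized $\phi$ needs a one-line justification, e.g.\ by testing against functionals $\ell\in E_n^\ast$, reducing to the scalar case, and applying Hahn--Banach. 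Neither point affects the validity of the argument.
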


\begin{proof}
Let $u \in W^{n,p}_{\omega_n}(I,\hat{E}_n)$, w.l.o.g. $u \in C_c^\infty(I,E_n)$ using lemma \ref{Ccdense}. Be $\delta > 0$. Denote $J := supp\:u \subset I$ compact.

Choose $\tilde{\delta} > 0$ so that $\int_J \tilde{\delta} \omega_n(t) \: dt < \frac{\delta}{4}$ .
As $E_\infty \subset E_n$ is dense, we can choose a sequence $(u_l)$ of functions converging pointwise to $u$, with $u_l(t) \in E_\infty$ and $supp\:u_l = J$. Take $N \in \mathbb{N}$ large enough, that for all $l>N$ we have $\Vert u(t) - u_l(t) \Vert_{E_n} < \tilde{\delta}$ for all $t \in J$.
\begin{align*}
\Vert u - u_l \Vert_{L^p_{\omega_n}(I,E_n)} = \int_I \Vert u(t) - u_l(t) \Vert^{}_{E_n} \omega_n(t) \: dt \leq \int_J \tilde{\delta} \omega_n(t) \: dt < \frac{\delta}{4}
\end{align*}

We therefore also have $u_l \in L^p_{\omega_n}(I,E_n)$. Hence we can mollify $u_l$ to
\[u_{l,\epsilon} := \eta_\epsilon \ast u_l \]
Since $u_{l,\epsilon} \in C^\infty(I,E_\infty)$ and, the fact that $supp \, u_{l,\epsilon} \subset J + [-\epsilon,\epsilon] =: J_\epsilon$ is compact, using the lemma \ref{supp}, we get $u_{l,\epsilon} \in C_c^\infty(I,E_\infty)$.

Using lemmata \ref{4.6}, \ref{4.7} and \ref{convolution} we therefore have
\begin{align*}
&\Vert u - u_{l,\epsilon} \Vert_{W^{n,p}_{\omega_n}(I,\hat{E}_n)} \leq \Vert u - \eta_\epsilon \ast u \Vert_{W^{n,p}_{\omega_n}(I,\hat{E}_n)} + \Vert \eta_\epsilon \ast u - u_{l,\epsilon} \Vert_{W^{n,p}_{\omega_n}(I,\hat{E}_n)} \\
&= \sum_{k=0}^n \Vert \partial^k( u - \eta_\epsilon \ast u) \Vert_{L^p_{\omega_n}(I,E_{n-k})} + \sum_{k=0}^n \Vert \partial^k( \eta_\epsilon \ast (u - u_l)) \Vert_{L^p_{\omega_n}(I,E_{n-k})} \\
&= \sum_{k=0}^n \Vert \partial^k u - \eta_\epsilon \ast \partial^k u) \Vert_{L^p_{\omega_n}(I,E_{n-k})} +\sum_{k=0}^n \Vert \eta_\epsilon^{(k)} \ast (u - u_l)) \Vert_{L^p_{\omega_n}(J_\epsilon,E_{n-k})} \\
&< \frac{\delta}{2} + \sum_{k=0}^n \int_{J_\epsilon} \omega_n(t) \left\Vert \int_\mathbb{R} \eta_\epsilon^{(k)} (t-s) (u(s) - u_l(s)) ds \right\Vert_{E_{n-k}} dt \\
&\leq \frac{\delta}{2} + \sum_{k=0}^n \int_{J_\epsilon} \omega_n(t) \int_\mathbb{R} \vert \eta_\epsilon^{(k)} (t-s) \vert \: \Vert u(s) - u_l(s)\Vert^{}_{E_{n-k}} ds \: dt \\
&\leq \frac{\delta}{2} + \sum_{k=0}^n \int_{J_\epsilon} \omega_n(t) \int_\mathbb{R} \vert \eta_\epsilon^{(k)} (t-s) \vert \: \tilde{\delta} \: ds \: dt \\
&\leq \frac{\delta}{2} + \sum_{k=0}^n \int_{J_\epsilon} \omega_n(t) C_{\epsilon,k} \tilde{\delta} \: dt < \delta
\end{align*}
Note that in this inequality we first need to estimate the left summand with $\epsilon \rightarrow 0$ and then the right one with $l \rightarrow \infty$.

Altogether we have shown that the inclusion $C_c^\infty(I,E_\infty) \subset C_c^\infty(I,E_n)$ is dense, when viewed as an embedding of dense subspaces of $W^{n,p}_{\omega_n}(I,\hat{E}_n)$.
Hence, using lemma \ref{Ccdense}, we get that $C_c^\infty(I,E_\infty) \subset W^{n,p}_{\omega_n}(I,\hat{E}_n)$ is dense.
\end{proof}

Now we can formulate the final theorem of this first section to get an sc-structure using Bochner-Sobolev spaces.

\begin{thm} \label{scBS} ~\par
Let $E$ be a reflexive sc-Banach space, i.e. all Banach spaces $E_n$ in the sc-structure are reflexive. Let $I \subseteq \mathbb{R}$ and let $\omega_n$ be scaled weights. Denote $\hat{E}_n := (E_n, . . . ,E_0)$.

Then $G := L^p_{\omega_0}(I,E_0)$ has an sc-structure, where $G_n = W^{n,p}_{\omega_n}(I,\hat{E}_n)$.
\end{thm}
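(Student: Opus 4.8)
The plan is to check the three defining conditions of an sc-structure in the sense of \cite{scdef}: that $(G_n)$ is a nested family of Banach spaces with $G_0 = G$, that each inclusion $G_{n+1}\hookrightarrow G_n$ is compact, and that $G_\infty := \bigcap_{n\ge 0} G_n$ is dense in every $G_m$. The first condition is immediate: by the convention $W^{0,p}_{\omega_0}(I,\hat E_0)=L^p_{\omega_0}(I,E_0)$ we have $G_0=G$, and each $G_n$ is a Banach space because, as recorded after the definition of the weighted spaces, $W^{n,p}_{\omega_n}(I,\hat E_n)$ inherits completeness from the unweighted Bochner-Sobolev spaces (via the $\sigma$-finite measure $\nu$).

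For the compact inclusions I would apply Proposition \ref{wcompact} with the scale chain supplied directly by the sc-structure of $E$. To obtain $G_{n+1}=W^{n+1,p}_{\omega_{n+1}}(I,\hat E_{n+1})\hookrightarrow W^{n,p}_{\omega_n}(I,\hat E_n)=G_n$, take in that proposition $\hat X=\hat E_{n+1}=(E_{n+1},\dots,E_0)$ and $\hat Y=\hat E_n=(E_n,\dots,E_0)$, i.e. $X_k=Y_k=E_k$. Since $E$ is a reflexive sc-Banach space, every $E_k$ is reflexive and the inclusions $E_k\hookrightarrow E_{k-1}$ are compact, so the hypothesised chain $E_{n+1}\xhookrightarrow[]{cpt}E_n\hookrightarrow E_n\xhookrightarrow[]{cpt}\cdots\xhookrightarrow[]{cpt}E_0$ holds, with the continuous embeddings $Y_k\hookrightarrow X_k$ being identities. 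As $(\omega_n)$ are scaled weights and the exponent is the constant $p\in(1,\infty)$ (needed for the reflexivity invoked in Proposition \ref{wcompact}), the proposition yields compactness of $G_{n+1}\hookrightarrow G_n$ directly; in particular each inclusion is continuous, so the family is genuinely nested.

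For density I would invoke Lemma \ref{ccdense}, which asserts that $C_c^\infty(I,E_\infty)$ is dense in each $G_m$. The point is the inclusion $C_c^\infty(I,E_\infty)\subset G_\infty$: if $f\in C_c^\infty(I,E_\infty)$ then every derivative $f^{(k)}$ is again smooth, compactly supported and $E_\infty$-valued, hence lies in $L^p_{\omega_n}(I,E_{n-k})$ for all $n,k$, so $f\in W^{n,p}_{\omega_n}(I,\hat E_n)$ for every $n$ and thus $f\in G_\infty$. Since a subset of $G_\infty$ is already dense in each $G_m$, so is $G_\infty$ itself. Combining the three conditions shows that $(G_n)$ is an sc-structure on $G$.

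The genuinely hard analysis---the weighted Aubin-Lions compactness and the density of compactly supported $E_\infty$-valued functions---has already been carried out in Proposition \ref{wcompact} and Lemma \ref{ccdense}. Accordingly the main obstacle here is not a new estimate but the correct bookkeeping of indices and weights when matching the present family to the hypotheses of Proposition \ref{wcompact}, together with checking that its standing assumptions (reflexivity forcing $1<p<\infty$, and, after a harmless translation if needed, $0\in I$) are in force so that it applies verbatim.
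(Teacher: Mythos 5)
Your proposal is correct and follows essentially the same route as the paper: compactness of $G_{n+1}\hookrightarrow G_n$ via Proposition \ref{wcompact} applied to $\hat X=\hat E_{n+1}$, $\hat Y=\hat E_n$ (the paper's indexing is $\hat X=\hat E_n$, $\hat Y=\hat E_{n-1}$, which is the same step shifted by one), and density of $G_\infty$ via Lemma \ref{ccdense} together with the observation that $C_c^\infty(I,E_\infty)\subset G_\infty$. Your explicit remarks that reflexivity forces $1<p<\infty$ and that $0\in I$ is needed (up to translation) for Proposition \ref{wcompact} are sensible bookkeeping points that the paper leaves implicit.
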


\begin{proof}
The embedding $G_n \hookrightarrow G_m$ is compact for $m < n$ because of proposition \ref{wcompact}, where we set $\hat{X} = \hat{E}_n$ and $\hat{Y} = \hat{E}_{n-1}$.

Because of lemma \ref{sc=c} and lemma \ref{ccdense} we have that
\begin{align*}
G_\infty = \bigcap_{n \in \mathbb{N}_0} G_n \supset  \bigcap_{n \in \mathbb{N}_0} (G_n \cap C^\infty(I,E_n)) = \bigcap_{n \in \mathbb{N}_0} G_n \cap sC^\infty(I,E) \\
\supset \bigcap_{n \in \mathbb{N}_0} C_c^\infty(I,E_n) \supset C_c^\infty(I,E_\infty)
\end{align*}
and $C_c^\infty(I,E_\infty)$ is dense in every $G_n$ and thus also in $G_\infty$. Hence $G_\infty$ is dense in every $G_n$.
\end{proof}

In the case that $\bar{I} \subset \mathbb{R}$ is compact, i.e. $I$ is bounded in $\mathbb{R}$, we could leave out the scaled weights, as we can use the Aubin-Lions lemma and the Meyers-Serrin theorem directly and get the same results. But as the scaled weights are continuous on $\mathbb{R}$ and strictly positive we get that 
\[ W^{n,p}_{\omega_n}(I,\hat{E}_n) = W^{n,p}(I,\hat{E}_n)\]
anyways, so we do not need another theorem stating the same fact.

As always with sc-Banach spaces, we can choose another space out of the sc-structure as ''starting'' space, for example set $G = G_0 := W^{1,p}_{\omega_1}(I,\hat{E}_1) =W^{1,p}_{\omega_1}(I;E_1,E_0)$. This is used as we can start in a space of functions of higher regularity, but still retain the same ''smooth space'' $G_\infty$.

\begin{exa}
As defined in example \ref{exweights}, take exponential weights $\omega_n(s) := e^{\delta_n \beta(s) s}$ with $\delta_0 < \delta_n < \delta_\infty < \infty$, i.e. $(\delta_n) \subset \mathbb{R}$ a monotone increasing sequence converging to $\delta_\infty$. Then the exponentially weighted Bochner-Sobolev spaces $W^{n,p}_{\omega_n}(I,\hat{E}_n) =: W^{n,p}_{\delta_n}(I,\hat{E}_n)$ form a sc-structure on the Bochner space $L^p_{\delta_0}(I,E_0)$.

\end{exa}

\begin{prop} \label{scinfty}  ~\par
For $G_\infty$ as in theorem \ref{scBS} we get that
\begin{equation}
G_\infty \subset sC^\infty(I,E) 
\end{equation}
\end{prop}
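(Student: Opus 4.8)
The plan is to reduce the claim, via Lemma \ref{sc=c}, to the statement that each $u \in G_\infty$ restricts to a $C^\infty$ map $u \colon I \to E_m$ for every $m \in \mathbb{N}_0$, and then to extract this regularity from the containments of Proposition \ref{WinC}. Fix $u \in G_\infty = \bigcap_n G_n$ and fix indices $m, l \in \mathbb{N}_0$. Setting $N := m + l + 1$, the membership $u \in G_N = W^{N,p}_{\omega_N}(I,\hat{E}_N)$ is available since $u$ lies in every $G_n$.

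Next I would invoke Proposition \ref{WinC} with the tuple $\hat{E}_N = (E_N, \ldots, E_0)$: reading that proposition with Sobolev order $N$ (so that its ``$n+1$'' equals $N$), it yields $W^{N,p}(I,\hat{E}_N) \subset C^{l}(I, E_{N-1-l})$ for every $l \in \{0, \ldots, N-1\}$. With our choice $N = m+l+1$ we have $N-1-l = m$, hence $u \in C^l(I, E_m)$. Since the embeddings $E_{k+1} \hookrightarrow E_k$ of the sc-structure are continuous, the hypotheses of Proposition \ref{WinC} are met.

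Two technical points have to be addressed. First, Proposition \ref{WinC} is stated for compact $I = [a,b]$ and for unweighted spaces, whereas here $I \subseteq \mathbb{R}$ may be unbounded and the spaces carry the weight $\omega_N$. As in Remark \ref{WinC2}, $C^l$-regularity is a local condition: around any $t_0 \in I$ one restricts to a compact subinterval $[t_0 - \epsilon, t_0 + \epsilon] \subset I$ on which the continuous, strictly positive weight $\omega_N$ is bounded above and below, so $W^{N,p}_{\omega_N}$ and $W^{N,p}$ agree there as sets; applying Proposition \ref{WinC} on this subinterval gives $u \in C^l([t_0-\epsilon, t_0+\epsilon], E_m)$, and patching these local statements yields $u \in C^l(I, E_m)$. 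Note that only the set-theoretic containment into $C^l$ is needed here, not continuity of the embedding.

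Letting $l$ range over all of $\mathbb{N}_0$ shows $u \in C^\infty(I, E_m)$, and letting $m$ range over all of $\mathbb{N}_0$ we conclude by Lemma \ref{sc=c} that $u \colon I \to E$ is sc-smooth, i.e. $u \in sC^\infty(I,E)$. The only real subtlety is the bookkeeping of indices ensuring that order-$N$ Sobolev regularity delivers $C^l$ maps into precisely $E_m$, together with the passage from the compact, unweighted hypotheses of Proposition \ref{WinC} to the weighted, possibly unbounded $I$ of Theorem \ref{scBS}.
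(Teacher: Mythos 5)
Your proposal is correct and follows essentially the same route as the paper's own (much terser) proof: membership in every $G_n$, Proposition \ref{WinC} together with the localization of Remark \ref{WinC2} to obtain $u \in C^l(I,E_m)$ for all $l,m$, and then Lemma \ref{sc=c} to conclude sc-smoothness. Your added index bookkeeping ($N = m+l+1$ so that $E_{N-1-l}=E_m$) and the explicit observation that the weighted and unweighted spaces coincide as sets on compact subintervals are exactly the details the paper leaves implicit.
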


\begin{proof}
Let $u \in G_\infty = \bigcap_{n \in \mathbb{N}_0} G_n$, that is $u \in G_n = W^{n,p}_{\omega_n}(I,\hat{E}_n)$ for all $n \in \mathbb{N}$.
Using proposition \ref{WinC} or remark \ref{WinC2} we therefore get that $u \in C^k(I,E_l)$ for any $k,l \in \mathbb{N}$. Hence we have that $u \in C^\infty(I,E_l)$ for all $l \in \mathbb{N}$ and using lemma \ref{sc=c} we thus have $u \in sC^\infty(I,E)$.
\end{proof}

Note that in general, at least for unbounded $I\in \mathbb{R}$, we have that  \[ G_\infty \neq sC^\infty(I,E) \]

We will denote $G_\infty = sC_{\omega^{}_\infty}^\infty(I,E)$.

\newpage

\section{The length functional}   

Let (M,g) be an oriented n-dimensional Riemannian manifold. Let $S^1 = \mathbb{R}/\mathbb{Z}$. \\

\subsection{The length functional on loops}  

Take the length functional on the free loop space, defined as
\begin{align*}
 L: C^\infty(S^1,M) &\longrightarrow \mathbb{R} \\
 v &\longmapsto \int_{S^1} \Vert v'(t) \Vert^{}_g dt
\end{align*}
or alternatively using the weak differential instead for all $k \in \mathbb{N}$
\begin{align*}
 L: W^{k,2}(S^1,M) &\longrightarrow \mathbb{R} \\
 v &\longmapsto \int_{S^1} \Vert v'(t) \Vert^{}_g dt
\end{align*}
Thus we can also view it as a functional from the sc-Banach space $W^{1,2}(S^1,M)$ to $\mathbb{R}$. 

The length functional is continuous, as the space $W^{k,2}(S^1,M)$ is locally k-weak diffeomorphic to $W^{k,2}\big(S^1,S^1 \times (-r,r)^{n-1}\big)$, where we can choose a local diffeomorphism $\varphi$ to map a loop $v_0$ onto $S^1 \times \{0\}^{n-1}$. Hence $L(\varphi(v))$ is continuous, as it is part of the norm of $\varphi(v) \in W^{1,1}\big(S^1,S^1 \times (-r,r)^{n-1}\big)$ and the embedding of $W^{k,2}\big(S^1,S^1 \times (-r,r)^{n-1}\big)$ in $W^{1,1}\big(S^1,S^1 \times (-r,r)^{n-1}\big)$ is continuous. Since $\varphi$ is a local homeomorphism, we get that L is continuous in $W^{k,2}(S^1,M)$.

Usually one uses the energy functional 
\begin{align*}
 E: W^{k,2}(S^1,M) &\longrightarrow \mathbb{R} \\
 v &\longmapsto \int_{S^1} \Vert v'(t) \Vert^2_g dt 
\end{align*}
instead of the length functional, as the critical points of the energy are true geodesics (and constant loops), and not just reparametrizations of geodesics.
The critical sets of E are finite dimensional manifolds, which have a non-free $S^1$ action on them, with the execption of the constant loops, which form a citical manifold diffeomorphic to M.

The critical sets of L are even worse, than those of the energy, as they are infinite-dimensional and the reparametrizations acting on them are not even a group (these reparametrizations do not need to be injective, only surjective maps $\phi:S^1 \rightarrow S^1$). We will get around this difficulty by only looking at embedded loops (immersed loops might work as well). There only diffeomorphic reparametrizations exist, whose group action we will quotient out, to be able to work on isolated critical points or finite dimensional critical manifolds. 

Instead of the energy E, one could also use the functional $F = \sqrt{E}$, for which we get the inequality $L(v) \leq F(v)$ for all $v \in W^{k,2}(S^1,M)$. This functional has the same critical points as the energy.


\subsection{Reparametrizations}  

Now we will look at the groups of injective reparametrizations of $S^1$, i.e. diffeomorphisms from $S^1$ into itself, under whose action the length functional is invariant. From now on we will omit the injective when refering to these groups. In the next subsection we will look at these reparametrizations on embedded loops to get a space of unparametrized embedded loops.

Let us first look at smooth reparametrizations
\[ \mathcal{D}^\infty := \{ \psi : S^1 \rightarrow S^1 \vert \: \psi \text{ is a smooth diffeomorphism}\} \]
Note that this space, or rather group can also be seen as the product $\mathcal{D}_\infty = O(2) \times Diffeo(p,o,S^1)$, where $Diffeo(p,o,S^1)$ is the group of orientation preserving diffeomorphisms with a fixed point p.

For $k \in \mathbb{N}$ we can analogously define k-times weakly differentiable repara-metrizations
\[ \mathcal{D}^k := \{ \psi : S^1 \rightarrow S^1 \vert \: \psi \text{ is a k-times weak differentiable diffeomorphism}\} \]

The group $\mathcal{D}^\infty$ is a Lie group, as a Fr\'{e}chet manifold, see \cite{inverse_fct, diffS1}.
The other reparametrization groups $\mathcal{D}^k$ are open subsets of $W^{k,2}(S^1,S^1)$, see \cite{diffS1}. Note that they are not Lie groups, as the group multiplication $\phi \psi = \psi \circ \phi$ is not smooth.

Lastly we define the group of continuous reparametrizations
\[ \mathcal{D}^0 := \{ \psi : S^1 \rightarrow S^1 \vert \: \psi \text{ is a homeomorphism}\} \]

Now look at the group actions $f$ of $\mathcal{D}^k$ on the loop space:
\begin{align*}
f: W^{k,2}(S^1,M) \times \mathcal{D}^k &\longrightarrow W^{k,2}(S^1,M) \\
f: C^\infty(S^1,M) \times \mathcal{D}^\infty &\longrightarrow C^\infty(S^1,M) \\
(v,\psi) \:\: &\longmapsto \psi_\ast v := v \circ \psi
\end{align*}
It would be ideal if these group actions would form an sc-smooth action.

\begin{lemma} \label{sc0}  ~\par
The reparametrization actions $f: W^{k,2}(S^1,M) \times \mathcal{D}^k \longrightarrow W^{k,2}(S^1,M)$ as above, for $k\geq 2$, are continuous and 
\begin{align*}
\bar{f}: \Gamma^{k}(TM,v) \times \mathcal{D}^{k} &\longrightarrow \Gamma^{k}(TM,v) \\
(V,\psi) \:\: &\longmapsto \psi_\ast V = V \circ \psi
\end{align*}
are continuous for any $v \in W^{k,2}(S^1,M)$ and for any $k\geq 2$.
\end{lemma}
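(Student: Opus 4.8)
The plan is to establish sequential continuity, taking a convergent sequence $(v_l,\psi_l)\to(v,\psi)$ and showing $v_l\circ\psi_l\to v\circ\psi$. The hypothesis $k\geq 2$ enters through the Sobolev embedding $W^{k,2}(S^1)\hookrightarrow C^{k-1}(S^1)$ on the one-dimensional domain: it makes every loop a $C^1$ map, makes $\psi',\dots,\psi^{(k-1)}$ continuous, and, since a diffeomorphism of $S^1$ has $\psi'$ of constant sign and nowhere vanishing, forces $\inf_{S^1}|\psi'|>0$. Because $v_l\to v$ and $\psi_l\to\psi$ uniformly, $v_l\circ\psi_l\to v\circ\psi$ in $C^0$, so for large $l$ all these loops take values in a fixed finite collection of charts of $M$; as the $W^{k,2}$-manifold topology is defined through such charts, the manifold statement reduces to continuity of the composition map $(w,\psi)\mapsto w\circ\psi$ on $W^{k,2}(S^1,\mathbb{R}^n)\times\mathcal{D}^k$. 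The section statement for $\bar f$ reduces to the same Euclidean assertion after trivialising the relevant pullback bundle by a local frame, where the map is moreover linear in $V$.

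First I would expand by the Fa\`a di Bruno formula: for $1\leq m\leq k$,
\[ (w\circ\psi)^{(m)}=\sum_{j=1}^{m}(w^{(j)}\circ\psi)\,B_{m,j}\big(\psi',\dots,\psi^{(m-j+1)}\big), \]
with $B_{m,j}$ universal (Bell) polynomials, the case $m=0$ being simply $w\circ\psi$. For every $m\leq k-1$ all factors are continuous, since $w^{(j)}\in C^{k-1-j}\subseteq C^0$ and each $\psi^{(i)}$ with $i\leq k-1$ is continuous; these terms therefore converge in $C^0$, hence in $L^2(S^1)$. In the top slot $m=k$ the terms with $2\leq j\leq k-1$ are again purely continuous, while the term $j=1$ equals $(w'\circ\psi)\,\psi^{(k)}$, a product of the $C^0$-convergent bounded factor $w_l'\circ\psi_l\to w'\circ\psi$ with the $L^2$-convergent factor $\psi_l^{(k)}\to\psi^{(k)}$; the elementary estimate $\|a_lb_l-ab\|_{L^2}\leq\|a_l\|_\infty\|b_l-b\|_{L^2}+\|a_l-a\|_\infty\|b\|_{L^2}$ shows this converges in $L^2$ as well.

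The genuinely delicate contribution is the remaining top term $j=k$, namely $(w^{(k)}\circ\psi)\,(\psi')^k$, in which $w^{(k)}$ lies only in $L^2$. Here I would use the change of variables $\|g\circ\psi\|_{L^2}^2=\int_{S^1}|g(s)|^2\,|(\psi^{-1})'(s)|\,ds\leq(\inf|\psi'|)^{-1}\|g\|_{L^2}^2$, whose constant is uniform in $l$ because $\psi_l'\to\psi'$ uniformly and $\inf|\psi'|>0$. Splitting
\[ w_l^{(k)}\circ\psi_l-w^{(k)}\circ\psi=(w_l^{(k)}-w^{(k)})\circ\psi_l+\big(w^{(k)}\circ\psi_l-w^{(k)}\circ\psi\big), \]
the first summand is bounded in $L^2$ by the change-of-variables constant times $\|w_l^{(k)}-w^{(k)}\|_{L^2}\to0$. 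For the second, with $w^{(k)}$ fixed, I approximate it in $L^2$ by a continuous $g$, use $\|g\circ\psi_l-g\circ\psi\|_{L^2}\to0$ (uniform continuity of $g$ together with $\psi_l\to\psi$ uniformly), and control the two remainders $\|(w^{(k)}-g)\circ\psi_l\|_{L^2}$ and $\|(w^{(k)}-g)\circ\psi\|_{L^2}$ again by the uniform constant times $\|w^{(k)}-g\|_{L^2}$. Since $(\psi_l')^k\to(\psi')^k$ in $C^0$, the whole product converges in $L^2$.

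The main obstacle is precisely this top-order term: the fact that $g\mapsto g\circ\psi$ is continuous on $L^2(S^1)$ jointly in $g$ and in uniformly converging diffeomorphisms is not formal and needs the change-of-variables bound together with the density of continuous functions. Everything else is routine chain rule and Sobolev embedding. Summing the finitely many terms of the expansion over $0\leq m\leq k$ yields $\|w_l\circ\psi_l-w\circ\psi\|_{W^{k,2}}\to0$, and undoing the chart and frame reductions gives the continuity of $f$ and of $\bar f$.
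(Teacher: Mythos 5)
Your proposal is correct and follows essentially the same route as the paper's proof: expand $(w\circ\psi)^{(m)}$ by the chain rule, observe that all terms of order $m\le k-1$ and all mixed top-order terms converge using the Sobolev embedding $W^{k,2}(S^1)\hookrightarrow C^{k-1}(S^1)$ and the boundedness of $1/\psi'$, and isolate the genuinely delicate term $(w^{(k)}\circ\psi)(\psi')^k$, which both you and the paper handle by approximating the $L^2$-derivative by a continuous function (the paper via mollification, you via density of continuous functions) combined with a uniform change-of-variables bound. The only organizational difference is that you prove joint continuity directly along a sequence $(v_l,\psi_l)\to(v,\psi)$, whereas the paper verifies continuity separately in $V$ and in $\psi$; your version is slightly cleaner, since separate continuity alone would not formally suffice, although the paper's estimates are uniform enough on bounded sets to be combined.
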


\begin{proof}
We will prove the continuity of $\bar{f}$, the proof for $f$ will work analogously.
Throughout the proof we will use that $\psi^{(l)}$ and $V^{(l)}$ are continuous for $0 \leq l < k$.
We will also use the fact that $W^{1,2}(S^1,M)$ embedds continuously into $C^0(S^1,M)$ and hence for any bounded $\psi$ and $V$, we get that $\psi^{(l)}$ and $V^{(l)}$ are bounded in $C^0$, i.e. have bounded supremum for $0 \leq l < k$. We also use that as $\psi$ is a diffeomorphism, therefore strictly monotone and hence $\frac{1}{\psi'(t)}$ is bounded.

First we will show the continuity in $V \in \Gamma^{k+1}(TM,v) $. \\
Be $\epsilon > 0$. Let $\Vert V_1 - V_2 \Vert_{\Gamma^{k+1}(TM,v)} < \delta$
\begin{align*}
&\Vert \bar{f}(V_1,\psi) - \bar{f}(V_2,\psi) \Vert_{\Gamma^{k}(TM,v)} = \Vert V_1 \circ \psi - V_2 \circ \psi \Vert_{\Gamma^{k}(TM,v)} \\
&= \sum_{j=0}^k \left( \int_{S^1} \Vert \partial^j (V_1(\psi(t)) -V_2(\psi(t))) \Vert^2_g \: dt\right)^{\frac{1}{2}} \\
&= \sum_{j=0}^k \left( \int_{S^1} \Vert \partial^{j-1} (V_1'(\psi(t)) -V_2'(\psi(t))) \cdot \psi'(t) \Vert^2_g \: dt\right)^{\frac{1}{2}} \\
&= \sum_{j=0}^k \bigg( \int_{S^1}  \Big\Vert \sum_{i=1}^j C_{ij} (V_1^{(j-i+1)}(\psi(t)) - V_2^{(j-i+1)}(\psi(t))) \\
&\:\:\:\:\:\:\:\:\:\:\:\:\:\:\:\:\:\:\:\:\:\:\:\: \cdot P(\psi'(t),...,\psi^{(i)}(t)) \Big\Vert^2_g \: dt \bigg)^{\frac{1}{2}} \\
&\leq \sum_{j=0}^k \bigg( \sum_{i=1}^j C_{ij} \int_{S^1}  \Vert  (V_1^{(j-i+1)}(\psi(t)) - V_2^{(j-i+1)}(\psi(t))) \Vert^2_g  \\
&\:\:\:\:\:\:\:\:\:\:\:\:\:\:\:\:\:\:\:\:\:\:\:\: \cdot P\big(\vert\psi'(t)\vert^2,...,\vert\psi^{(i)}(t)\vert^2\big)  \: dt \bigg)^{\frac{1}{2}} \\
&\leq \sum_{j=0}^k \left( \sum_{i=1}^{j\neq k} C_{ij} \int_{S^1}  \Vert  (V_1^{(j-i+1)}(\psi(t)) - V_2^{(j-i+1)}(\psi(t))) \Vert^2_g \:  C_{i,j,\psi} \: dt \right)^{\frac{1}{2}} \\
&\:\:\:\:\:\:\:\:\:\:\:\: \:\:\:\:\:\:\:\:\:\:\:\: + C_k \left( \int_{S^1}  \Vert  (V_1'(\psi(t)) - V_2'(\psi(t))) \Vert^2_g \cdot \vert\psi^{(k)}(t)\vert^2 \: dt \right)^{\frac{1}{2}} \\
&\leq \sum_{j=0}^k \sum_{i=1}^{j\neq k} \tilde{C}_{i,j,\psi} \Vert V_1 - V_2 \Vert_{\Gamma^{k+1}(TM,v)} + C_k \left( \int_{S^1}  C \delta \cdot \vert\psi^{(k)}(t)\vert^2 \: dt \right)^{\frac{1}{2}} \\
&< \sum_{j=0}^k \sum_{i=1}^{j\neq k} \tilde{C}_{i,j,\psi} \delta + \tilde{C}_{k,\psi} \delta < \epsilon
\end{align*}

Now we show the continuity in $\psi \in \mathcal{D}^{k+1}$. \\
Be $\epsilon > 0$. Let $\Vert \psi_1 -\psi_2 \Vert^{}_{\mathcal{D}^{k+1}} < \delta$ and $\Vert \psi_1 \Vert^{}_{\mathcal{D}^{k+1}}\, , \,\Vert \psi_2 \Vert^{}_{\mathcal{D}^{k+1}} < c$.
We use a family of mollifiers $\eta_\gamma$ for $\gamma > 0$ small enough and we use the continuity of the diffeomorphisms $\psi$ and $\psi'$ to get for $\delta \rightarrow 0$ that $\psi_2^{-1} \circ \psi_1(t) \rightarrow t$ and $D(\psi_2^{-1} \circ \psi_1(t)) \rightarrow 1$. We hence get:
\begin{align*}
&V^{(k)} \circ \psi_1 \circ \psi_1^{-1} \circ \psi_2 \ast \eta_\gamma (t) \\
&= \int_{S^1} V^{(k)} \circ \psi_1 \circ \psi_1^{-1} \circ \psi_2 (s) \cdot \eta_\gamma(t - \psi_2^{-1} \circ \psi_1 \circ \psi_1^{-1} \circ \psi_2 (s)) ds \\
&= \int_{S^1} V^{(k)} \circ \psi_1(s) \cdot \eta_\gamma(t - \psi_2^{-1} \circ \psi_1(s)) \frac{1}{det D(\psi_2^{-1} \circ \psi_1(s))} ds \\
&\xrightarrow{\delta \rightarrow 0} \int_{S^1} V^{(k)} \circ \psi_1(s) \cdot \eta_\gamma(t - s) 1 ds = V^{(k)} \circ \psi_1 \ast \eta_\gamma (t) \\
\end{align*}
Therefore we can show that the following term is small.
\begin{align*}
&\Vert V^{(k)} \circ \psi_1 - V^{(k)} \circ \psi_2 \Vert_{L^2(S^1,M)} \leq \Vert V^{(k)} \circ \psi_1 - V^{(k)} \circ \psi_1 \ast \eta_\gamma \Vert_{L^2(S^1,M)} \\
&+ \Vert V^{(k)} \circ \psi_1 \ast \eta_\gamma - V^{(k)}  \circ \psi_2 \ast \eta_\gamma \Vert_{L^2(S^1,M)} + \Vert V^{(k)} \circ \psi_2 \ast \eta_\gamma - V^{(k)} \circ \psi_2 \Vert_{L^2(S^1,M)} \\
&< 2 \delta_\gamma + \bigg( \int_{S^1} \Big\Vert V^{(k)} \circ \psi_1 \ast \eta_\gamma (t) - V^{(k)} \circ \psi_1 \circ \psi_1^{-1} \circ \psi_2 \ast \eta_\gamma (t) \Big\Vert^2 dt \bigg)^{\frac{1}{2}} \\
&< 2 \delta_\gamma + \delta_{\gamma,\delta}
\end{align*}

With that term and the continuity of all derivatives except the k-th we get:
\begin{align*}
&\Vert \bar{f}(V,\psi_1) - \bar{f}(V,\psi_2) \Vert_{\Gamma^{k}(TM,v)} = \Vert V \circ \psi_1 - V \circ \psi_2 \Vert_{\Gamma^{k}(TM,v)} \\
&= \sum_{j=0}^k \left( \int_{S^1} \Vert \partial^j (V(\psi_1(t)) - V(\psi_2(t))) \Vert^2_g \: dt\right)^{\frac{1}{2}} \\
&= \sum_{j=0}^k \bigg( \int_{S^1}  \Big\Vert \sum_{i=1}^j C_{ij} V^{(j-i+1)}(\psi_1(t)) \cdot P(\psi_1'(t),...,\psi_1^{(i)}(t))  \\
&\:\:\:\:\:\:\:\:\:\:\:\:\:\:\:\:\:\:\:\:\: - V^{(j-i+1)}(\psi_2(t)) \cdot P(\psi_2'(t),...,\psi_2^{(i)}(t)) \Big\Vert^2_g \: dt \bigg)^{\frac{1}{2}} \\
&\leq \sum_{j=0}^k  \sum_{i=1}^{j\neq k} C_{ij} \bigg( \int_{S^1} \Big\Vert  V^{(j-i+1)}(\psi_1(t)) \big( P(\psi_1'(t),...,\psi_1^{(i)}(t))  \\
&\;\;\;\;\;\;\;\;\;\;\;\;\;\;\;\;\;\;\;\;\;\;\;\;\;\;\;\;\;\;\;\;\;\;\;\;\;\;\;\;\;\;\;\;\;\;\;\;\;\;\;\;\;\;\;\; -  P(\psi_2'(t),...,\psi_2^{(i)}(t)) \big) \Big\Vert^2_g  dt \bigg)^{\frac{1}{2}} \\
&\:\:\:\:\:+ C_k \bigg( \int_{S^1} \Big\Vert  V'(\psi_1(t)) \big( P(\psi_1'(t),...,\psi_1^{(k)}(t))  -  P(\psi_2'(t),...,\psi_2^{(k)}(t)) \big) \Big\Vert^2_g  dt \bigg)^{\frac{1}{2}} \\
&+ \sum_{j=0}^k  \sum_{i=1}^{j\neq k} C_{ij} \bigg( \int_{S^1} \Big\Vert \big( V^{(i)}(\psi_1(t)) - V^{(i)}(\psi_2(t)) \big) P(\psi_2'(t),...,\psi_2^{(j-i+1)}(t)) \Big\Vert^2_g \: dt \bigg)^{\frac{1}{2}} \\
&\:\:\:\:\:+ C_k \bigg( \int_{S^1} \Big\Vert \big( V^{(k)}(\psi_1(t)) - V^{(k)}(\psi_2(t)) \big) P(\psi_2'(t)) \Big\Vert^2_g \: dt \bigg)^{\frac{1}{2}} \\
&\leq \sum_{j=0}^k  \sum_{i=1}^{j\neq k} C_{ij} \bigg( \int_{S^1} \Big\Vert  V^{(j-i+1)}(\psi_1(t)) \Big\Vert^2_g \delta^2 C_{i,j,c} dt \bigg)^{\frac{1}{2}} \\
&\:\:\:\:\:+ C_k \bigg( \int_{S^1} C_V \vert \psi_1^{(k)}(t)  -  \psi_2^{(k)}(t) \vert dt \bigg)^{\frac{1}{2}} \\
&+ \sum_{j=0}^k  \sum_{i=1}^{j\neq k} C_{ij} \bigg( \int_{S^1} \Big\Vert \big( V^{(i)}(\psi_1(t)) - V^{(i)}(\psi_2(t)) \big) \Big\Vert^2_g C_{i,j,c}\: dt \bigg)^{\frac{1}{2}} \\
&\:\:\:\:\:+ C_k C_c \bigg( \int_{S^1} \Big\Vert \big( V^{(k)}(\psi_1(t)) - V^{(k)}(\psi_2(t)) \big) \Big\Vert^2_g \: dt \bigg)^{\frac{1}{2}} \\
&\leq \sum_{j=0}^k  \sum_{i=1}^{j\neq k} C_{ij} C_{V,c} \delta C_{i,j,c} + C_k C_V \delta + \sum_{j=0}^k  \sum_{i=1}^{j\neq k} C_{ij} C_{i,j,c} \alpha_{V,i}(\delta) + C_k C_c (2 \delta_\gamma + \delta_{\gamma,\delta}) \\
& < \epsilon
\end{align*}
In these inequalities P are the appropriate polynomials and $ \alpha_{V,i}(\delta) \xrightarrow{\delta \rightarrow 0} 0$.

\end{proof}

As $W^{k+2,2}(S^1,M) \times \mathcal{D}^{k+2} \subset W^{k+2,2}(S^1,M) \times W^{k+2,2}(S^1,S^1)$, $k \in \mathbb{N}_0$ form an sc-structure and all $f$ are continuous, we can view $f$ as a $sc^0$ map.

\begin{prop} \label{f_sc_smooth}  ~\par
The map 
\begin{align*}
f: W^{2,2}(S^1,M) \times \mathcal{D}^{2} &\longrightarrow W^{2,2}(S^1,M) \\
(v,\psi) \:\: &\longmapsto \psi_\ast v := v \circ \psi
\end{align*}
as a map between sc-manifolds is sc-smooth.
\end{prop}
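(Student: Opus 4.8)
The plan is to follow the $sc$-calculus scheme of \cite{scdef}: produce a candidate tangent map, verify the $sc^1$ difference quotient, and then observe that the tangent map is built from the same operators as $f$, so the argument iterates to give $sc^\infty$. First I would pass to the local charts of the loop space already used for the length functional; since such a chart $\varphi$ acts pointwise on loops, reparametrization commutes with it, $\varphi(v\circ\psi)=(\varphi v)\circ\psi$, so in charts $f$ is again the composition operator $(v,\psi)\mapsto v\circ\psi$ between open subsets of the model $sc$-spaces whose $m$-th levels are $W^{m+2,2}(S^1,\mathbb{R}^{\dim M})\times W^{m+2,2}(S^1,\mathbb{R})$. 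By Lemma \ref{sc0} the map $f$ and the vector-field reparametrization $\bar f$ are continuous on every level, so $f$ is $sc^0$.

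The candidate linearization, obtained by differentiating $(v+sV)\circ(\psi+s\phi)$ at $s=0$, is
\[ Df(v,\psi)(V,\phi)=V\circ\psi+(v'\circ\psi)\,\phi, \]
with associated tangent map
\[ Tf\big((v,\psi),(V,\phi)\big)=\big(f(v,\psi),\;\bar f(V,\psi)+(v'\circ\psi)\,\phi\big). \]
The $sc$-shift is essential here: on level $m$ of the tangent map the base point $(v,\psi)$ lies on level $m+1$, so $v'\in W^{m+2,2}$ and the product $(v'\circ\psi)\phi$ lands again in $W^{m+2,2}$, exactly compensating the derivative lost in passing from $v$ to $v'$. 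To check $sc^1$-differentiability I fix $(v,\psi)$ and an increment $(V,\phi)$ on level $1$ (that is, in $W^{3,2}$) and estimate the level-$0$ remainder in the $W^{2,2}$-norm over $\|(V,\phi)\|_{1}$. Splitting the increment as
\[ f(v+V,\psi+\phi)-f(v,\psi)=V\circ(\psi+\phi)+\big(v\circ(\psi+\phi)-v\circ\psi\big) \]
and subtracting $Df(v,\psi)(V,\phi)$ leaves two remainders: the composition remainder $V\circ(\psi+\phi)-V\circ\psi$, which is of order $\|V\|_{1}\,\|\phi\|_{1}$, and the second-order Taylor remainder of $v$ in $\psi$, which is of order $\|\phi\|_{1}^2$; both are therefore $o(\|(V,\phi)\|_{1})$ provided their top-order parts are controlled.

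The main obstacle is exactly this top-order control. Measuring the remainder in $W^{2,2}$ forces two differentiations, and the dangerous contributions have the form $\big(v''(\psi+\phi)-v''(\psi)-v'''(\psi)\phi\big)(\psi')^2$ and the analogous term with $V$, where $v'''$ and $V'''$ are only $L^2$-objects for base points on level $1$. I would handle these as in the proof of Lemma \ref{sc0}: write
\[ v''(\psi+\phi)-v''(\psi)-v'''(\psi)\phi=\int_0^1\big(v'''(\psi+s\phi)-v'''(\psi)\big)\phi\,ds, \]
pull out $\|\phi\|_{C^0}\lesssim\|\phi\|_{1}$, and insert a mollifier $\eta_\gamma$ to prove $\|v'''(\psi+s\phi)-v'''(\psi)\|_{L^2}\to 0$ as $\|\phi\|_{1}\to 0$, making this term $o(\|(V,\phi)\|_{1})$. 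Every remaining term contains only $v''\in C^0$ or lower derivatives together with at least two small factors drawn from $(\phi,\phi',V,V')$, all controlled in $C^0$ via $W^{m+2,2}(S^1)\hookrightarrow C^{m+1}$, so it is genuinely quadratic and hence $o(\|(V,\phi)\|_{1})$.

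It remains to see that $Tf$ is $sc^0$ and to iterate. Its first component is $f$ again; its second component is $\bar f(V,\psi)$ plus the pointwise product $(v'\circ\psi)\phi$, and continuity on each level follows from Lemma \ref{sc0} together with the fact that $W^{m+2,2}(S^1)$ is a Banach algebra for $m\ge 0$, so that pointwise multiplication is continuous and, by the shift, $(v'\circ\psi)\phi$ lies on the correct level. Thus $Tf$ is $sc^0$. Since $Tf$ is assembled from the very same building blocks as $f$, namely the composition operators $f$, $\bar f$ and pointwise products, the differentiability estimate above applies verbatim to $Tf$, showing $Tf$ is $sc^1$. By induction every iterated tangent map $T^k f$ is $sc^0$, so $f$ is $sc^k$ for all $k$, i.e. $f$ is sc-smooth.
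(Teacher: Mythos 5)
Your proposal is correct and follows essentially the same route as the paper: the same candidate differential $Df(v,\psi)(V,\phi)=V\circ\psi+(v'\circ\psi)\phi$, the same use of Lemma \ref{sc0} and of Sobolev multiplication, the same observation that the level shift ($v$ on level $m+1$, hence $v'$ on level $m$) makes the tangent map land on the correct level, and the same induction to all orders. The only substantive difference is that you verify the $sc^1$ difference-quotient estimate directly (isolating the top-order term $\big(v''(\psi+\phi)-v''(\psi)-v'''(\psi)\phi\big)(\psi')^2$ and treating it with the mollifier argument), whereas the paper writes out the explicit formulas for all higher differentials $D^mf$ and delegates the passage from their pointwise existence and continuity to sc-smoothness to \cite[Prop.~4.10]{shift_map}.
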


\begin{proof}
In this proof we will use the equivalent notions of sc-differentiability as outlined in the lemma 4.6 and proposition 4.10 in \cite{shift_map}. As these notions of sc-differentiability are defined on sc-Banach spaces and not sc-manifolds, we would technically need to look at the homeomorphisms between $W^{k,2}(S^1,M) \times \mathcal{D}^k$ and its tangent space. This will be omitted, and we will work with f directly, instead of looking at its equivalent on the tangent spaces.
Throughout this proof we will use lemma \ref{sc0}.

For $k \in \mathbb{N}_0$ set $F_{k} := W^{k+2,2}(S^1,M) \times W^{k+2,2}(S^1,S^1)$, an open neighborhood $U_k \subset W^{k+2,2}(S^1,M) \times \mathcal{D}^{k+2} \subset F_k$ and $H_{k} := W^{k+2,2}(S^1,M)$.
So we get $f: U_k \rightarrow H_k$ is $sc^0$.
Note that 
\[ T_{(v,\psi)}F_k = \Gamma^{k+2}(TM,v) \times \Gamma^{k+2}(TS^1,\psi) \cong \Gamma^{k+2}(TM,v) \times W^{k+2,2}(S^1,\mathbb{R}) \]

At $(v,\psi) \in F_1$ we evaluate the differential with $(V_1,\Psi_1) \in T_{(v,\psi)}F_0$:
\[ Df(v,\psi)(V_1,\Psi_1) = (v'\circ\psi)\Psi_1 + V_1 \circ \psi = \psi_\ast v' \cdot \Psi_1 + \psi_\ast V_1 \]

$Df(v,\psi)$ is continuous as a map from $F_k$ to $H_k$ for all $k \in \mathbb{N}_0$ as both summands are continuous in $\Psi$ and $V$ using lemma \ref{sc0} and the fact that the multiplication in these Sobolev spaces is continuous, see for example \cite{sobolev_multi}.
The map $Df: U_{k+1} \oplus T_{(v,\psi)}F_k \rightarrow T_{\psi_\ast v}H_k$  is also continuous as we have $v \in W^{k+3,2}(S^1,M)$ and therefore $v' \in \Gamma^{k+2}(TM,v)$.
Therefore $f$ is $sc^1$.

For $(v,\psi) \in U_2$ the second sc-differential at $(V_1,\Psi_1),(V_2,\Psi_2) \in T_{(v,\psi)}F_1 $ is
\[ 
D^2f(v,\psi)(V_1,\Psi_1)(V_2,\Psi_2) = \psi_\ast v''\cdot \Psi_1 \Psi_2 + \psi_\ast V_2' \cdot \Psi_1 + \psi_\ast V_1' \cdot \Psi_2
\]
Via induction one gets as the m-th sc-differential at $(v,\psi) \in F_m$
\[
D^mf(v,\psi): \bigoplus_{l=1}^m T_{(v,\psi)}F_{m-1} \rightarrow T_{\psi_\ast v}H_0
\]
Which when evaluated at $(V_1,\Psi_1),...,(V_m,\Psi_m) \in T_{(v,\psi)}F_{m-1}$ yields
\[
D^mf(v,\psi)\big((V_1,\Psi_1),...,(V_m,\Psi_m)\big) = \psi_\ast v^{(m)}\cdot \prod_{l=1}^m \Psi_l \: + \sum_{l=1}^m \psi_\ast V_l^{(m-1)} \prod_{j=1,j\neq l}^m \Psi_j
\]
By lemma \ref{sc0} and the continuity of multiplication this m-th sc-differential is continuous as a map
\[
D^mf: F_{m+k} \oplus \bigoplus_{l=1}^m T_{(v,\psi)}F_{m+k-1} \rightarrow T_{\psi_\ast v}H_k
\]
Hence using proposition 4.10 in \cite{shift_map} we get that $f$ is of class $sc^m$.
\end{proof}

Let $\mathcal{D}$ be the group $\mathcal{D}^2$ with the sc-structure created by $\mathcal{D}^k$.

\begin{cor}  ~\par
 $\mathcal{D}$ is a sc-Lie group.
\end{cor}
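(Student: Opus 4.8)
The plan is to verify the three defining properties of an sc-Lie group for $\mathcal{D}$: that it is an sc-manifold, that the group multiplication $m(\phi,\psi) = \psi \circ \phi$ is sc-smooth, and that the inversion $\mathrm{inv}(\psi) = \psi^{-1}$ is sc-smooth. The sc-manifold structure is immediate: each $\mathcal{D}^{k+2}$ is an open subset of $W^{k+2,2}(S^1,S^1)$ (see \cite{diffS1}), these open sets are nested, and they carry the induced sc-structure, so $\mathcal{D} = \mathcal{D}^2$ is an open sub-sc-manifold of $W^{2,2}(S^1,S^1)$.

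The sc-smoothness of multiplication is essentially a special case of Proposition \ref{f_sc_smooth}. Taking $M = S^1$ there gives that $f\colon W^{2,2}(S^1,S^1) \times \mathcal{D}^2 \to W^{2,2}(S^1,S^1)$, $(v,\psi) \mapsto v\circ\psi$, is sc-smooth. Restricting the first factor to the open subset $\mathcal{D}^2$ and noting that the composition of two diffeomorphisms is again a diffeomorphism (so the image lies in $\mathcal{D}$, an open subset at every level), we obtain that $(\psi,\phi)\mapsto \psi\circ\phi$ is sc-smooth into $\mathcal{D}$. Since the group multiplication is $m(\phi,\psi)=\psi\circ\phi$, which is this map precomposed with the sc-smooth coordinate flip $(\phi,\psi)\mapsto(\psi,\phi)$, multiplication is sc-smooth.

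The real work is the inversion map, and this is where I expect the main obstacle. Differentiating the identity $\psi\circ\psi^{-1}=\mathrm{id}$ along a path yields $D\,\mathrm{inv}(\psi)\Psi = -(\Psi\circ\psi^{-1})/(\psi'\circ\psi^{-1})$, which already displays the characteristic loss of one derivative through the factor $(\psi^{-1})' = 1/(\psi'\circ\psi^{-1})$. The strategy is to imitate the structure of the proof of Proposition \ref{f_sc_smooth}: first show $\mathrm{inv}$ is $sc^0$ using the continuity of composition from Lemma \ref{sc0} together with the fact that $\psi'$ is bounded away from zero on $S^1$ (so $1/\psi'$ is controlled and division is continuous in these Sobolev spaces, cf. \cite{sobolev_multi}); then establish by induction that the higher sc-differentials are built from the reparametrization action $\Psi\mapsto\Psi\circ\psi^{-1}$ and from products and quotients of the functions $\psi^{(j)}\circ\psi^{-1}$, each continuous as a map between appropriately shifted levels. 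The point is that the derivative loss incurred by $1/(\psi'\circ\psi^{-1})$ is absorbed by the upward shift built into the sc-structure, so that although $\mathrm{inv}$ fails to be smooth as a map of Banach manifolds it is nevertheless sc-smooth.

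The delicate step, exactly as in Proposition \ref{f_sc_smooth}, is verifying the $sc^0$-continuity of each differential at the \emph{shifted} levels rather than mere Banach continuity: one must check that $\psi\mapsto\psi^{-1}$, $\psi\mapsto(\psi^{-1})'$ and the composition $\Psi\mapsto\Psi\circ\psi^{-1}$ depend continuously on $\psi$ in the strong topology after the shift, which will require a mollification argument of precisely the type used in the second half of Lemma \ref{sc0} to handle the top-order term. Granting sc-smoothness of both $m$ and $\mathrm{inv}$, $\mathcal{D}$ is a group object in the category of sc-manifolds, hence an sc-Lie group.
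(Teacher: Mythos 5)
Your treatment of the sc-manifold structure and of the multiplication is exactly the paper's argument: the paper's entire proof consists of setting $M=S^1$ in Proposition \ref{f_sc_smooth} to get sc-smoothness of the multiplication, and observing that the $\mathcal{D}^k$ are open subsets of the scales of the sc-Banach space $W^{k,2}(S^1,S^1)$. Your additional remarks about the image landing in the open set $\mathcal{D}$ and the coordinate flip are harmless elaborations of the same point.

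Where you diverge is in insisting on sc-smoothness of the inversion $\psi\mapsto\psi^{-1}$. You are right that a Lie group needs this, and it is worth noting that the paper's proof is silent on inversion entirely --- so in scope your proposal is more honest than the paper. However, as written your inversion argument is a plan rather than a proof: the formula $D\,\mathrm{inv}(\psi)\Psi = -(\Psi\circ\psi^{-1})/(\psi'\circ\psi^{-1})$ is correct, but the two claims that carry all the weight --- that $\psi\mapsto\psi^{-1}$ is continuous level by level in the strong $W^{k,2}$ topology, and that the derivative loss in $1/(\psi'\circ\psi^{-1})$ is absorbed by the level shift so that each $D^m\mathrm{inv}$ is $sc^0$ on the shifted scales --- are asserted and deferred to ``a mollification argument of precisely the type used in Lemma \ref{sc0}'', not carried out. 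The top-order term here is genuinely more delicate than in Lemma \ref{sc0}, because $\psi^{-1}$ depends nonlinearly on $\psi$ and the $k$-th derivative of $\psi^{-1}$ involves a rational expression in $\psi',\dots,\psi^{(k)}$ composed with $\psi^{-1}$; continuity of this expression in $\psi\in W^{k,2}$ requires controlling a composition with a merely $W^{k,2}$-regular change of variables, which is exactly the hard case. So if the corollary is read as the paper intends (sc-manifold plus sc-smooth multiplication), your proof is complete and identical to the paper's; if it is read as you intend (a genuine group object in sc-manifolds), both the paper and your proposal leave the inversion step unproven, and yours at least names the obstacle.
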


\begin{proof}
Choose $M = S^1$. Using proposition \ref{f_sc_smooth} we get that the group multiplication is sc-smooth. 
$\mathcal{D}$ has a sc-manifold structure, as $\mathcal{D}^k$ are open subsets of scales in a sc-manifold.
\end{proof}

\subsection{Embedded loops}  

We want to study the length functional on embedded loops, since most of the results for the flow by curvature, which we will see later, are only known for embedded curves. Also the reparametrization action on embedded loops is free.

Let us define the space of k-times weak differentiable embedded curves
\[
\mathcal{E}^k(M) := \{v \in W^{k,2}(S^1,M) \big\vert \: v\text{ is an embedding }\}
\]
Embedding in this case means a k-times weak differentiable diffeomorphism onto its image. For $k \geq 2$ these spaces are open subsets of $W^{k,2}(S^1,M)$. 
Hence we can view $\mathcal{E}^2(M)$ as an sc-manifold using the induced sc-structure $\mathcal{E}^2_k(M) = \mathcal{E}^2(M) \cap W^{k+2,2}(S^1,M) = \mathcal{E}^{k+2}(M)$.

Let us also define the space of continuous embedded curves.
\[
\mathcal{E}^0(M) := \{v \in C^0(S^1,M) \big\vert \: v\text{ is an embedding }\}
\]

By restriction we get the action of the reparametrization group $\mathcal{D}^k$ on $\mathcal{E}^k(M)$.
\begin{align*}
f: \mathcal{E}^k(M) \times \mathcal{D}^k &\longrightarrow \mathcal{E}^k(M) \\
(v,\psi) \:\: &\longmapsto \psi_\ast v := v \circ \psi
\end{align*}
While the group action of $\mathcal{D}^k$ on $W^{k,2}(S^1,M)$ is not free, as for example the constant curves are fixed points of the action, it is a free action on $\mathcal{E}^k(M)$, as the embeddings are diffeomorphisms from $S^1$ onto their images.

Viewing the group action of $\mathcal{D}^k$ on $\mathcal{E}^k(M)$ as an equivalence relation we get embedded curves without parametrization, and hence we can choose a representative with a parametrization freely, for example a parametrization with constant speed.
Therefore the spaces of unparametrized embedded curves can be defined as
\[
E^k(M) := \mathcal{E}^k(M) \big/ \mathcal{D}^k
\]

For M a 2-dimensional manifold we can endow $E(M) :=E^2(M)$ with the sc-structure $E^2_k(M) = E^{k+2}(M)$.

\begin{rmk} ~\par
It is unclear if such a quotient of a sc-manifold and a sc-Lie group is in general again a sc-manifold. Is it enough for the action of the sc-Lie group to be free and proper? 

This would be an interesting direction for further research.

In the case of $E(M)$ we can explicitly prove that it is a sc-manifold for two-dimensional manifolds M, but this proof has no direct generalization.
\end{rmk}

Given a two-dimensional orientable manifold M, $E(M) :=E^2(M)$ with the sc-structure $E^2_k(M) = E^{k+2}(M)$ forms an sc-manifold, using the following lemma. The idea of this lemma stems from a similar argument for immersed $C^2$-loops in \cite{Angenent}.

\begin{lemma}  ~\par
For a 2-dimensional orientable manifold M and $k \geq 2$ the space $E^k(M)$ is locally homeomorphic to $W^{k,2}(S^1,\mathbb{R})$.
\end{lemma}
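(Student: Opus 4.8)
The plan is to build an explicit chart via a normal-graph (tubular neighborhood) construction, the kind of slice argument underlying Angenent's treatment of immersed loops. Fix a class $[v_0]\in E^k(M)$. Since $\mathcal{E}^k(M)$ is open and smooth embeddings are dense, I may pick a smooth embedded curve $w$ close to $v_0$ and build the construction around $w$; this keeps the normal exponential map smooth while still capturing every curve near $v_0$. Because $M$ is two-dimensional and orientable, $w$ carries a global smooth unit normal field $N$ along it, and I form
\[ \Phi: S^1 \times (-\epsilon,\epsilon) \longrightarrow M, \qquad \Phi(t,s) := \exp_{w(t)}(s\,N(t)). \]
Since $w$ is a smooth embedding and $\Phi(\cdot,0)=w$, for $\epsilon$ small $\Phi$ is a smooth diffeomorphism onto a tubular neighborhood $\mathcal{T}$ of the image $w(S^1)$; here $t$ records position along the curve and $s$ the signed normal height.

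The candidate chart is the normal-graph map. For $h \in W^{k,2}(S^1,\mathbb{R})$ with $\Vert h\Vert$ small I set $v_h(t):=\Phi(t,h(t))$. I would first check $v_h \in \mathcal{E}^k(M)$: its $W^{k,2}$-regularity follows from composing the $W^{k,2}$ map $t\mapsto(t,h(t))$ with the smooth $\Phi$, and it is injective because, as a graph over $S^1$ inside $\mathcal{T}$, it projects bijectively onto the first factor. The assertion is then that
\[ W^{k,2}(S^1,\mathbb{R}) \supseteq V \longrightarrow E^k(M), \qquad h \longmapsto [v_h] \]
is a homeomorphism onto a neighborhood of $[v_0]$.

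To see surjectivity onto such a neighborhood, take $v\in\mathcal{E}^k(M)$ close to $v_0$ in $W^{k,2}$; for $v$ close enough its image lies in $\mathcal{T}$, so $\Phi^{-1}\circ v=(a,b)$ with $a:S^1\to S^1$ and $b:S^1\to(-\epsilon,\epsilon)$. As $v$ is close to $w$ and $w=\Phi(\cdot,0)$, the map $a$ is $C^1$-close to $\mathrm{id}_{S^1}$ and hence lies in $\mathcal{D}^k$ by the openness of $\mathcal{D}^k$ in $W^{k,2}(S^1,S^1)$. Reparametrizing by $a^{-1}$ gives $v\circ a^{-1}=\Phi(\cdot,\,b\circ a^{-1})$, so $[v]=[v_h]$ with $h:=b\circ a^{-1}$. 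Uniqueness of $h$ is immediate: the set $v(S^1)\cap\mathcal{T}$ is the graph $s=h(t)$, which determines $h$ pointwise irrespective of parametrization or orientation; thus two slice elements $v_h,v_{h'}$ are $\mathcal{D}^k$-equivalent if and only if $h=h'$, so the map descends to a well-defined bijection on the quotient.

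Finally I would verify continuity in both directions. The forward map $h\mapsto[v_h]$ is continuous since composition with the fixed smooth $\Phi$ is continuous on $W^{k,2}$ and the quotient projection is continuous. For the inverse I would use that $v\mapsto(a,b)=\Phi^{-1}\circ v$, the inversion $a\mapsto a^{-1}$ on $\mathcal{D}^k$, and the composition $(a,b)\mapsto b\circ a^{-1}$ are all continuous in the $W^{k,2}$ topology, which is precisely the sort of estimate carried out in Lemma \ref{sc0}. The main obstacle I anticipate is this regularity and continuity bookkeeping for composition and inversion of $W^{k,2}$ maps, together with checking that the resulting bijection is open for the quotient topology on $E^k(M)$, so that it is a genuine homeomorphism rather than merely a continuous bijection.
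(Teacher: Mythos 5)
Your construction is essentially the paper's own proof: the paper likewise extends the given curve to a local $W^{k,2}$-diffeomorphism $\sigma: S^1\times(-\epsilon,\epsilon)\rightarrow M$ and takes the graph chart $\tau\mapsto \sigma(\cdot,\tau(\cdot))$, so your normal exponential map $\Phi$ of a nearby smooth embedded curve is simply a concrete choice of that $\sigma$. The bijectivity and continuity checks you spell out (projecting a nearby curve, reparametrizing by the $S^1$-component, uniqueness of the graph function) are exactly the details the paper leaves implicit, so the proposal is correct and follows the same route.
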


\begin{proof}
For any $u \in E^k(M)$ choose a parametrization $u:S^1 \rightarrow M$. Choose an extension of $u$ to a local $W^{k,2}$-diffeomorphism $\sigma: S^1 \times (-\epsilon,\epsilon) \rightarrow M$.
For any sufficiently small $ \tau \in W^{k,2}(S^1,\mathbb{R})$ we thus get that
\[ u_\tau(t) = \sigma(t,\tau(t)) \]
is also an embedded $W^{k,2}$-curve, i.e. $[u_\tau] \in E^k(M)$. Let 
\[U_\epsilon = \{\tau \in W^{k,2}(S^1,\mathbb{R})\: \big\vert \sup_{t \in S^1} \vert \tau(t) \vert < \epsilon \} \]
For sufficiently small $\epsilon > 0$ we get that the map
\begin{align*}
\phi : U_\epsilon &\rightarrow \phi(U_\epsilon) \subset E^k(M) \\
\tau &\mapsto u_\tau
\end{align*}
is a homeomorphism.
\end{proof}

The only thing now left to check is, if the transition maps 
\[ \phi_2^{-1} \circ \phi_1 : U_{\epsilon_1} \cap \phi_1^{-1} \circ \phi_2(U_{\epsilon_2}) \rightarrow \phi_2^{-1} \circ \phi_1( U_{\epsilon_1}) \cap U_{\epsilon_2} \]
are $sc^k$ for $U_{\epsilon_i} \subset W^{k+2,2}(S^1,\mathbb{R})$. \\
Take a $\tau \in U_{\epsilon_1} \cap \phi_1^{-1} \circ \phi_2(U_{\epsilon_2})$. Thus we get that $u_{1,\tau} \in \phi_1(U_{\epsilon_1}) \cap \phi_2(U_{\epsilon_2})$.
Now we reparametrize $u_2 \in \phi_2(U_{\epsilon_2})$ in such a way that we get the same parametrization in the frist argument of the $\sigma_i$.
\[ u_{1,\tau}(t) = \sigma_1(t,\tau(t)) = \sigma_2(t,\tilde{\tau}(t)) = u_{2,\tilde{\tau}}(t) \]
and therefore $(t,\tilde{\tau}(t)) = \sigma_2^{-1} ( \sigma_1 (t, \tau(t)))$.
From this we get a homeomorphism
\begin{align*}
\varphi: U_{\epsilon_1} \cap \phi_1^{-1} \circ \phi_2(U_{\epsilon_2}) &\rightarrow \phi_2^{-1} \circ \phi_1( U_{\epsilon_1}) \cap U_{\epsilon_2} \\
\tau \:\: &\mapsto \:\: \tilde{\tau}
\end{align*}
This homeomorphism is of class $sc^k$ since the reparametrization is $sc^k$ (proposition \ref{f_sc_smooth}) and $\sigma_2^{-1} \circ \sigma_1$ is a k+2-times weak differentiable diffeomorphism.
By construction $\varphi = \phi_2^{-1} \circ \phi_1$.

\subsection{The length functional on embedded loops}  

\begin{lemma} \label{smoothL}  ~\par
The Length functional $L: E(M) \rightarrow \mathbb{R}$ is sc-smooth.
\end{lemma}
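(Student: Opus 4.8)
The plan is to work in a single local chart, exploiting that $L$ is $\mathcal{D}$-invariant and hence descends to a well-defined functional on $E(M)=\mathcal{E}^2(M)/\mathcal{D}^2$, that sc-smoothness is a local condition, and that the transition maps between the charts $\phi:U_\epsilon\to E(M)$, $\tau\mapsto[u_\tau]$ with $u_\tau(t)=\sigma(t,\tau(t))$, are sc-smooth (by the argument following proposition \ref{f_sc_smooth}). Since composition with sc-smooth transition maps preserves sc-smoothness, it suffices to show that $L\circ\phi:U_\epsilon\to\mathbb{R}$ is sc-smooth, where $U_\epsilon\subset W^{2,2}(S^1,\mathbb{R})$ carries the sc-structure with levels $W^{k+2,2}(S^1,\mathbb{R})$ and $\mathbb{R}$ carries the trivial sc-structure.

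First I would write the functional out explicitly. Differentiating $u_\tau(t)=\sigma(t,\tau(t))$ gives $u_\tau'(t)=\partial_1\sigma(t,\tau(t))+\partial_2\sigma(t,\tau(t))\,\tau'(t)$, so that
\[
(L\circ\phi)(\tau)=\int_{S^1}\Phi\big(t,\tau(t),\tau'(t)\big)\,dt,\qquad \Phi(t,a,b):=\big\|\partial_1\sigma(t,a)+b\,\partial_2\sigma(t,a)\big\|_{g,\,\sigma(t,a)}.
\]
The decisive point, and the reason for restricting to embedded loops, is that $u_\tau$ is an immersion, so $u_\tau'(t)\neq0$ for every $t$; by continuity and compactness of $S^1$ the quantity $g(u_\tau',u_\tau')$ is bounded away from $0$, uniformly in $t$ on bounded neighbourhoods in the chart. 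Thus the square root is always taken of a strictly positive number, and $\Phi$ is smooth in the variables $(a,b)$ with all $(a,b)$-derivatives bounded on bounded sets. Note that the differentials of $L\circ\phi$ in the $\tau$-direction only ever hit the $(a,b)$-slots of $\Phi$ and never differentiate in $t$, so the mere measurability and boundedness of the coefficients $\partial_1\sigma,\partial_2\sigma$ in $t$ (which hold since $\sigma\in W^{2,2}$ embeds into $C^0$ in $t$) is all that is needed; the limited $t$-regularity of $\sigma$ is therefore not an obstacle.

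Then I would identify the candidate differentials. The first differential is
\[
D(L\circ\phi)(\tau)\,\xi=\int_{S^1}\Big(\partial_a\Phi(t,\tau,\tau')\,\xi(t)+\partial_b\Phi(t,\tau,\tau')\,\xi'(t)\Big)\,dt,
\]
and inductively the $m$-th differential $D^m(L\circ\phi)(\tau)(\xi_1,\dots,\xi_m)$ is a finite sum of integrals of $(a,b)$-partial derivatives of $\Phi$ of total order $m$, each multiplied by a product of $m$ factors drawn from the $\xi_l$ and $\xi_l'$. To conclude I would invoke the equivalent characterisation of maps of class $sc^m$ (lemma 4.6 and proposition 4.10 in \cite{shift_map}): one checks that each such $D^m(L\circ\phi)$ exists, is symmetric and multilinear in the directions, and is continuous as a map $W^{m+2,2}\times\big(W^{m+1,2}\big)^m\to\mathbb{R}$, i.e. on the correctly shifted scales. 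This uses the one-dimensional Sobolev embeddings $W^{m+1,2}(S^1)\hookrightarrow C^m(S^1)$, which put every $\xi_l$ and $\xi_l'$ into $C^0$ so that the integrands lie in $L^1$, the continuity of multiplication in these Sobolev spaces (\cite{sobolev_multi}), and the uniform bounds on the $(a,b)$-derivatives of $\Phi$ established above.

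The main obstacle is exactly this last verification: organising the Nemytskii-type estimates so that the $m$-th differential is jointly continuous across the shifted scales, keeping track of the fact that each differentiation costs precisely one scale (the base point is required at level $m$ while the directions sit at level $m-1$), which is what matches the scale shift built into the sc-differential. The positivity and smoothness of $\Phi$ reduce this to controlling compositions of the form $\partial^{\gamma}_{(a,b)}\Phi(\cdot,\tau,\tau')$ together with products of $\xi_l,\xi_l'$ in the relevant norms, a task that is routine given the $C^0$-embeddings but needs care in the scale bookkeeping; I expect this to be where essentially all the work of the proof concentrates.
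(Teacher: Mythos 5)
Your argument is correct in outline but takes a genuinely different route from the paper. The paper never passes to the chart: it establishes $sc^0$ from the continuity of $L$ on $W^{k,2}(S^1,M)$ plus reparametrization invariance, and then computes the sc-differentials intrinsically at a constant-speed representative $v$, obtaining $DL(v)(V_1)=\int_{S^1}\frac{1}{\Vert v'\Vert}g(v',V_1')\,dt=-\int_{S^1}\frac{1}{\Vert v'\Vert}g(v'',V_1)\,dt$ by integration by parts (the constant-speed normalization kills the boundary-type term), and observes that all higher differentials are sums of integrals of products of $\Vert v'\Vert^{-(2j+1)}$, $g(v',V_j')$ and $g(V_i',V_j')$, which are continuous on the shifted scales by proposition 4.10 of \cite{shift_map}. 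You instead pull $L$ back through the chart $\phi:\tau\mapsto[u_\tau]$ and treat $L\circ\phi$ as an integral functional $\int_{S^1}\Phi(t,\tau,\tau')\,dt$ of Nemytskii type. Both are legitimate; your version is arguably more scrupulous about what sc-smoothness on the quotient sc-manifold $E(M)$ actually means (the paper works with representatives and leaves the chart-compatibility implicit), and it isolates cleanly why embeddedness matters ($u_\tau'=\partial_1\sigma+\tau'\partial_2\sigma$ never vanishes, in fact uniformly, since $\partial_1\sigma,\partial_2\sigma$ are pointwise linearly independent). The paper's intrinsic computation buys the explicit formula $\nabla L\sim -v''/\Vert v'\Vert$, which is then used immediately to identify the critical points as geodesics and later to match the gradient flow with the curvature flow; your chart formulation would require an extra step to extract that. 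One point you should tighten: you address the limited $t$-regularity of $\sigma$ but not its regularity in the second slot $a$ — the paper's chart only takes $\sigma$ to be a local $W^{k,2}$-diffeomorphism, so $\Phi$ is not automatically smooth in $(a,b)$ to all orders; you need to center the charts at loops in $E^\infty(M)$ (which is dense, as is standard for sc-manifolds) and choose $\sigma$ smooth, or otherwise argue that $a$-derivatives of $\sigma$ up to the required order exist at the level where the $m$-th sc-differential is evaluated. With that fixed, the scale bookkeeping you describe (base point one level above the directions) matches what the paper does.
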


\begin{proof}
As the length functional $L: W^{k,2}(S^1,M) \rightarrow \mathbb{R}$ is continuous and the length functional is invariant under reparametrizations, we also get that $L: E^k(M) \rightarrow \mathbb{R}$ is continuous for $k \geq 2$. Hence $L: E(M) \rightarrow \mathbb{R}$ is $sc^0$.

At $v \in E^{k+3}(M)$, represented by a constant speed parametrized v, we can evaluate the differential with $V_1 \in T_vE^{k+2}(M)$ to get
\begin{align*}
DL(v)(V_1) &= \int_{S^1} \frac{1}{\Vert v'(t) \Vert} g(v'(t),V'_1(t)) dt \\
&= -\int_{S^1} \frac{1}{\Vert v'(t) \Vert} g(v''(t),V_1(t)) dt +\int_{S^1} \frac{1}{\Vert v'(t) \Vert} \frac{d}{dt}g(v'(t),V_1) dt \\
&= -\int_{S^1} \frac{1}{\Vert v'(t) \Vert} g(v''(t),V_1(t)) dt - \int_{S^1} \frac{d}{dt} \frac{1}{\Vert v'(t) \Vert} g(v'(t),V_1) dt \\
&= -\int_{S^1} \frac{1}{\Vert v'(t) \Vert} g(v''(t),V_1(t)) dt
\end{align*}
The differential $DL: E^{k+3}(M) \times T_vE^{k+2}(M) \rightarrow \mathbb{R}$ is continuous because $v \in W^{k+3}(S^1,M)$ and therefore $v''$ is continuous.
Hence L is $sc^1$. From the last term we can also see what the critical points of L are, those v where $v''(t) = 0$, that is, curves which are geodesics.

The second differential at $v \in E^{k+4}(M)$, evaluated at $V_1,V_2 \in T_vE^{k+3}(M)$ is
\begin{align*}
&D^2L(v)(V_1,V_2) \\
&= -\int_{S^1} \frac{1}{\Vert v'(t) \Vert^3} g(v'(t),V'_2(t)) g(v'(t),V_1'(t)) dt + \int_{S^1} \frac{1}{\Vert v'(t) \Vert} g(V_2'(t),V'_1(t))dt\\
&= \int_{S^1} \frac{1}{\Vert v'(t) \Vert^3} g(v'(t),V'_2(t)) g(v''(t),V_1(t)) dt + \int_{S^1} \frac{1}{\Vert v'(t) \Vert} g(V_2'(t),V'_1(t))dt\\
\end{align*}
If we look at the second differential at a critical point u, we thus get
\begin{align*}
D^2L(u)(V_1,V_2) = \int_{S^1} \frac{1}{\Vert u'(t) \Vert} g(V_2'(t),V'_1(t))dt\\
\end{align*}
All higher differentials $D^mL(v)(V_1,...,V_m)$ are sums of integrals over products of $ \displaystyle{\frac{1}{\Vert v'(t) \Vert^{2j+1}}}$, $g(v'(t),V'_j(t))$ and $g(V_i'(t),V'_j(t))$ for some $1 \leq i,j \leq m$.
Hence $D^m$ are continuous in v and $(V_1,...,V_m)$. Using proposition 4.10 in \cite{shift_map} we therefore get that L is of class $sc^m$.
\end{proof}

\begin{cor}  ~\par
The critical points of the length functional $L: E(M) \rightarrow \mathbb{R}$ are the embedded geodesics on M.
\end{cor}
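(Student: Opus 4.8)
The claim is a direct consequence of the first-variation formula established in the proof of Lemma \ref{smoothL}. The plan is to characterize a critical point $v \in E(M)$ as one for which $DL(v)(V_1) = 0$ for every tangent vector $V_1 \in T_v E(M)$, insert the formula
\[
DL(v)(V_1) = -\int_{S^1} \frac{1}{\Vert v'(t) \Vert} \, g(v''(t), V_1(t)) \, dt,
\]
which is valid for the constant-speed representative $v$, and then extract the geodesic equation $v'' = 0$ by a fundamental-lemma-of-the-calculus-of-variations argument.

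First I would fix the constant-speed representative, so that the displayed expression for $DL(v)$ is available, and record the orthogonality $g(v''(t), v'(t)) = 0$: differentiating $\Vert v'(t) \Vert^2 = \mathrm{const}$ gives $2\, g(v'',v') = 0$, so the covariant acceleration $v''$ is everywhere normal to the curve. This is precisely the reason the first variation descends to the quotient $E(M)$: the tangential variations $V_1 = f v'$, the infinitesimal reparametrizations that are quotiented out, pair to zero against $v''$ automatically, consistently with the reparametrization invariance of $L$.

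For the forward direction, suppose $v$ is critical. The honest tangent directions of $E(M)$ are realized by the chart variations $\tau \mapsto u_\tau$, which are transverse to the curve and, modulo the tangential directions, span all normal vector fields along $v$. Testing $DL(v)(V_1) = 0$ against such $V_1$, and using that $\tfrac{1}{\Vert v' \Vert}$ is a strictly positive continuous function while $v''$ is continuous (as observed in the proof of Lemma \ref{smoothL}), the fundamental lemma of the calculus of variations forces the normal component of $v''$ to vanish identically. Combined with $g(v'',v') = 0$ this yields $v''(t) = 0$ for all $t$, i.e. $\nabla_{v'} v' = 0$, so $v$ is a geodesic; being an element of $E(M)$ it is an embedded geodesic.

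The converse is immediate: if $v$ is an embedded geodesic, its constant-speed parametrization satisfies $v'' = 0$, so the displayed integral vanishes for every $V_1$ and $DL(v) = 0$. The point requiring the most care, and the main obstacle I would expect, is the bookkeeping between the tangent space of the quotient $E(M)$ and the space of admissible test fields: one must verify that restricting $V_1$ to the genuine tangent directions of $E(M)$ still suffices to conclude $v'' = 0$. This works precisely because $v''$ is itself normal, so the normal variations detect it completely.
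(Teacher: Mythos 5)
Your proposal is correct and follows essentially the same route as the paper, which simply reads off the critical points from the first-variation formula $DL(v)(V_1) = -\int_{S^1} \Vert v'(t)\Vert^{-1} g(v''(t),V_1(t))\,dt$ computed in the proof of Lemma \ref{smoothL}. Your write-up is more careful than the paper's one-line argument, in particular in making explicit the fundamental-lemma step and the point that the tangent directions of the quotient $E(M)$ are the normal variations, which suffice because $v''$ is itself normal for a constant-speed representative; this fills in details the paper leaves implicit rather than changing the approach.
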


\begin{proof}
As seen in the proof of lemma \ref{smoothL}, all critical points of the length functional are geodesics and all geodesics are critical points of L.

\end{proof}

The same statements and similar proofs also might work for the length functional on the space of immersed curves in M.

Now let us look at examples of the length functional on different manifolds.

\begin{exa} \label{S^2}  ~\par
If we take $M = S^2$ we get that the critical points of the length are the great circles on $S^2$. These great circles all have the same length $2\pi$ and form a two-dimensional critical manifold.
\end{exa}

\begin{exa} \label{T^2}  ~\par
Now let us take $T^2 = \mathbb{R}^2/\mathbb{Z}^2$, the flat torus. Here we have for each critical value of L at least two critical manifolds diffeomorphic to $S^1 = \mathbb{R}/\mathbb{Z}$. The geodesics of finite length on the flat torus are all lines of rational slope, where the shortest geodesics are those of slope 0 and $\infty$. All these geodesics are embedded and therefore critical points of the length functional. The critical points of L come in $S^1$-families, since for every parametrized embedded geodesic in $T^2$, there is a $T^2$-family of geodesics of the same length created by the action of $T^2$ on itself changing the start point of the parametrized geodesic. Because, as we view unparametrized loops, the movement of the start point along the geodesic produces the same geodesic, only a $S^1$-family remains.
\end{exa}

These first two examples are manifolds having Riemannian metrics with symmetries, and hence the critical points are not isolated, but rather in critical manifolds. Also all of these critical manifolds are not only the critical manifolds for the length on embedded curves, but also on immersed curves.
For Riemannian metrics such that there are no symmetries, we expect isolated critical points of L, as in the next examples. The length viewed on immersed curves in these examples would have significantly more critical points.

\begin{figure}[!h]
\centering
\includegraphics[width=0.8\linewidth]{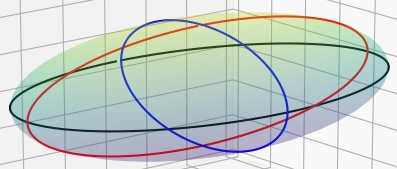}
\caption{\small The 3 embedded geodesics on an ellipsoid}
\label{3geodesics}
\end{figure}

\begin{exa} \label{Ellipsoid} ~\par
Let us now look at the ellipsoid $M = \big\{ (x,y,z) \in \mathbb{R}^3 \big\vert a x^2 + b y^2 + c z^2 = 1 \big\}$ for $a > b > c$. Using the theorem of the three geodesics (see \cite{curve_Grayson, 3geodesics}), we get that there are three embedded geodesics on such an ellipsoid, which can be seen in figure \ref{3geodesics}. As these three geodesics all have different lengths, we get that each critical value of the length L corresponds to one isolated critical point of L.

If we compare this to example \ref{S^2}, we see that changing the manifold or Riemannian metric even slightly might change the critical manifolds significantly. This is to be expected, as this behavior also exists, when one perturbs a Morse-Bott function to obtain a Morse function.
\end{exa}

\begin{exa} \label{skewed_torus}  ~\par
As the last example using a compact manifold, we will use the torus
$M = \big\{ (x,y,z) \in \mathbb{R}^3 \big\vert  (\sqrt{x^2 + y^2} - R)^2 + z^2 = (r + ax)^2 \big\}$ with $a \neq 0$ and $R > r > 0 $.

\begin{figure}[h]
\centering
\includegraphics[width=\linewidth]{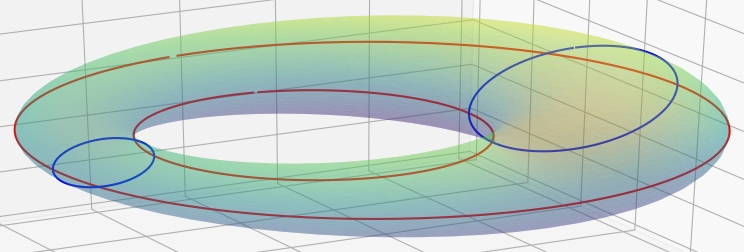}
\caption{\small The 4 simplest embedded geodesics on this more general torus}
\label{4geodesics}
\end{figure}

Because of the reparametrization action, we identify the homotopy class $(k,l) \in \mathbb{Z}^2 = \pi_1(T^2)$ with $(-k,-l)  \in \pi_1(T^2)$. On such a torus we have, at least generically, two embedded geodesics per identified homotopy class, except for the class of curves homotopic to a point, i.e. $(0,0) \in \pi_1(T^2)$, where there are no embedded geodesics. Thus compared to example \ref{T^2}, where each of these homotopy classes did have a $S^1$-family of embedded geodesics, we are now left with only two out of that $S^1$-family. 

For a generic $a \neq 0$, we get that all these embedded geodesics, and thus critical points of the length, have different lengths. (The generic condition is necessary, because there are $a \neq 0$, where embedded geodesics, that are not homotopic, have the same length.) 

\end{exa}

We thus have two examples of compact manifolds with isolated critical points, three on the ellipsoid and infinitely many on the torus.

And now, as a last example, we will look at a noncompact manifold, which is shaped roughly like a connected hyperboloid.

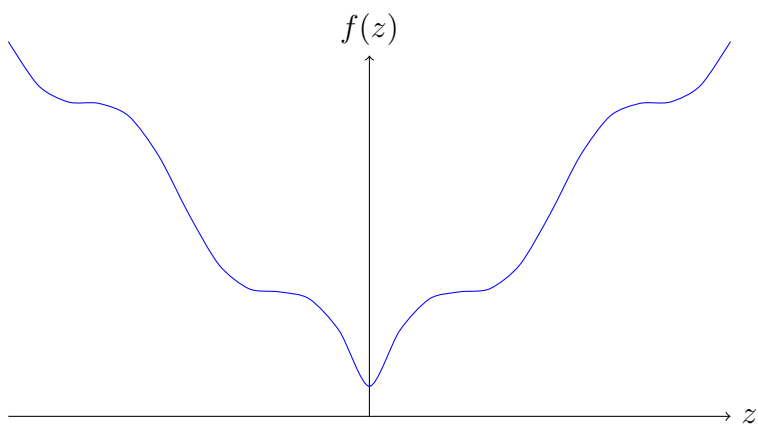
\begin{figure}[!h]
\centering
\begin{tikzpicture}
      \draw[->] (-4.8,0) -- (4.8,0) node[right] {$z$};
      \draw[->] (0,0) -- (0,4.8) node[above] {$f(z)$};
      \draw[scale=0.4,domain=-12:12,smooth,variable=\x,blue] plot ({\x},{1 + sqrt(\x*\x) + sin(180 / 3.141 * sqrt(\x*\x)) });
\end{tikzpicture}
\caption{\small $f(z) = \beta (z) z + \sin(\beta (z) z) + 1$}
\label{step_fct}
\end{figure}

\begin{exa} \label{stepped_cylinder} ~\par

Take a surface of revolution R of the graph of a function $f:\mathbb{R} \rightarrow (0,\infty)$, that is monotonely increasing for $z \geq 0$, monotonely decreasing for $z \leq 0$ and $\displaystyle{f(z) \xrightarrow{s \rightarrow \pm \infty} \infty}$. \\
Take for example the function $f(z) =  \beta (z) z + \sin(\beta (z) z) + 1$, depicted in figure \ref{step_fct}, with $\beta$ as in example \ref{exweights}.
This function has critical points $z_k = (2k+1)\pi$, $k \in \mathbb{Z}$ and $z^\ast = 0$.

 On $R = \big\{(x,y,z) \in \mathbb{R}^3 \: \big\vert \: x^2 + y^2 = (f(z))^2 \big\}$ the only embedded loops that are geodesics, are homotopic to $S^1 \times \{0\} \subset R$, namely the geodesics whose image is $\big(f(z_k) S^1\big) \times \{z_k\} \subset R$ for $z_k$ the critical points of f. (Here we use $S^1 \subset \mathbb{R}^2$ instead of $S^1 = \mathbb{R}/\mathbb{Z}$.)
On this surface we thus have infinitely many critical points of the length, that are all homotopic to each other.

This manifold will be interesting, since although it is not compact, the theorem of convergence of flow lines of embedded geodesics still holds on this manifold.
\end{exa}

\newpage

\section{Curvature and gradient flow}  
From now on (M,g) will be a 2-dimensional orientable Riemannian manifold which is convex at infinity.
These are the same conditions on the underlying manifold M as in \cite{curve_Grayson} and \cite{curve_Gage}, whose results we will use.

\subsection{Gradient flow of the length}  

A gradient flow line of a $sc^1$ function f is a path $v \in sC_{\omega_\infty}^\infty(\mathbb{R},E)$, for E an sc-manifold, satisfying the gradient flow equation
\begin{equation} \label{gfleq}
\partial_t v(t) + \nabla f (v(t)) = 0
\end{equation}
for any $t \in \mathbb{R}$. 
Let us denote $\displaystyle{\lim_{t \rightarrow \pm \infty} v(t) = v^\pm}$.

\begin{lemma} [gradient flow lines flow downwards] \label{down_flow} ~\par
Let $v \in sC^\infty(\mathbb{R},E)$ be a solution of the gradient flow equation (\ref{gfleq}) and let $t_1 < t_2$.\\
Then $f(v(t_1)) \geq f(v(t_2))$ and the equality $f(v(t_1)) = f(v(t_2))$ holds if and only if $v(t) = v_0 \in crit(f)$.
\end{lemma}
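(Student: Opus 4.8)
The plan is to reduce the statement to the elementary fact that $f$ decreases along its own negative gradient. Set $h(t) := f(v(t))$. Since $v \in sC^\infty(\mathbb{R},E)$ and $f$ is $sc^1$, the sc-chain rule makes $h : \mathbb{R} \to \mathbb{R}$ continuously differentiable, with
\[ h'(t) = Df(v(t))\big[\partial_t v(t)\big]. \]
Substituting the gradient flow equation (\ref{gfleq}), $\partial_t v(t) = -\nabla f(v(t))$, and using that the gradient is characterised by $Df(p)[\xi] = \langle \nabla f(p),\xi\rangle$, I obtain
\[ h'(t) = -Df(v(t))\big[\nabla f(v(t))\big] = -\Vert \nabla f(v(t))\Vert^2 \leq 0. \]

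Integrating this identity over $[t_1,t_2]$ gives
\[ f(v(t_2)) - f(v(t_1)) = -\int_{t_1}^{t_2} \Vert \nabla f(v(t))\Vert^2\, dt \leq 0, \]
which is precisely $f(v(t_1)) \geq f(v(t_2))$. The equality case then follows from the sign of the integrand. The \emph{if} direction is immediate, since a constant path $v \equiv v_0$ yields $f(v(t_1)) = f(v_0) = f(v(t_2))$. For the \emph{only if} direction, equality forces $\int_{t_1}^{t_2}\Vert\nabla f(v(t))\Vert^2\, dt = 0$; as $t \mapsto \Vert \nabla f(v(t))\Vert^2$ is continuous and non-negative, it must vanish identically on $[t_1,t_2]$, so $\nabla f(v(t)) = 0$ there. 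By (\ref{gfleq}) this forces $\partial_t v \equiv 0$ on $[t_1,t_2]$, so $v$ is constant, equal to some $v_0$ with $\nabla f(v_0) = 0$, i.e. $v_0 \in crit(f)$; uniqueness of solutions of the gradient flow ODE then propagates this to all of $\mathbb{R}$.

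The main obstacle I expect is not the computation but its justification in the sc-setting: verifying that $h = f \circ v$ is genuinely a classical $C^1$ function with $h'(t) = Df(v(t))[\partial_t v(t)]$. This rests on the sc-chain rule together with the fact that $\mathbb{R}$ carries the trivial sc-structure, so that being $sc^1$ for a map into $\mathbb{R}$ coincides with being $C^1$, and on the compatibility of the gradient $\nabla f$ with the differential $Df$ through the chosen metric, which supplies the key identity $Df(p)[\nabla f(p)] = \Vert \nabla f(p)\Vert^2$. Once these foundational points are secured, the remaining steps are routine.
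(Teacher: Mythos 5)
Your proposal is correct and follows essentially the same route as the paper: differentiate $f\circ v$ along the flow, substitute the gradient flow equation, and use the defining property of the gradient to obtain $\frac{d}{dt}f(v(t)) = -\Vert\nabla f(v(t))\Vert^2 \leq 0$. Your treatment of the equality case (integrating, using continuity of the integrand, and propagating constancy by uniqueness of the flow) is somewhat more careful than the paper's terse remark, but it is the same argument in substance.
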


\begin{proof}
Using the gradient flow equation (\ref{gfleq}) and the definition of the gradient we can compute:
\begin{align*}
\frac{d}{dt} f(v(t)) = df(v(t)) \, \partial_tv(t) = - df(v(t)) \nabla f (v(t)) = - \Vert \nabla f (v(t)) \Vert^2_{g_{E}^{}} \leq 0 
\end{align*}
Here we get ''$= 0$'' if and only if $ \nabla f (v(t)) = 0$, and therefore if and only if $v(t) \in crit(f) \subset E$.
Hence we know that $f(v(t_1)) \geq f(v(t_2))$ and the equality $f(v(t_1)) = f(v(t_2))$ holds if and only if $v(t) \in crit(f)$.
\end{proof}

Obviously we will apply this to the length functional on embedded curves to get the gradient flow of the length.

In this thesis we will mostly ignore gradient flow lines, where $v^+$ or $v^-$ are not critical points of the length on embedded curves. 
Such gradient flow lines occur for example when $v^+$ is a constant curve, that is the gradient flow shrinks the embedded curves to a point.
Another example would be gradient flow lines where $\displaystyle{\lim_{t \rightarrow - \infty} L(v(t)) = \infty}$, so no limit curve $v^-$ exists.

We will thus only focus on gradient flow lines between critical points of the length functional, the embedded geodesics.


\begin{exa}
Both on $S^2$ and $T^2$ from examples \ref{S^2} and \ref{T^2}, the only gradient flow lines between critical points of L, are constant gradient flow lines, because all homotopic critical points are of the same length. Hence there are no nonconstant gradient flow lines of the length between the great circles on $S^2$ or the geodesics on $T^2$.
\end{exa}

\subsection{Flow by curvature}  

The evolution of a starting curve $u: S^1 \rightarrow M$ under the flow by curvature will be the family of smooth curves 
\begin{equation} \begin{aligned}
u: S^1 \times [0,T) &\longrightarrow M \\
(s,t) &\longmapsto u(s,t)
\end{aligned} \end{equation}
satisfying a heat equation
\begin{equation} \label{heateq} \begin{aligned}
\frac{\partial u}{\partial t} = k N
\end{aligned} \end{equation}
where k is the curvature of u and N is the Normal vector to u.
While in $\mathbb{R}^2$ the flow by curvature shrinks all embedded curves to points, see \cite{plane_curve_GH,plane_curve_Grayson}, on more general 2-dimensional manifolds it exhibits a much more interesting behavior. 

This curvature flow is also known as the curve shrinking flow, which is exemplified by the following equation
\begin{equation} \label{-k^2}
\frac{\partial}{\partial t}L(u(t,s)) = - \int_{S^1} k^2(u(t,s)) ds
\end{equation}

\begin{lemma} \cite[theorem 0.1]{curve_Grayson} \label{0.1} ~\par
Let $u : S^1 \rightarrow M$ be a smooth embedded curve with $u: S^1 \times [0, t_\infty ) \rightarrow M$ satisfying $ \frac{\partial u}{\partial t} = k N$, with $t_\infty \in \mathbb{R}$ being the largest time, such that the flow of u is well defined.

If $t_\infty$ is finite, then u converges to a point. If $t_\infty$ is infinite, the curvature k of u converges to 0 in the $C^\infty$-seminorms.
\end{lemma}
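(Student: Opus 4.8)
The plan is to treat the dichotomy by first setting up the local theory and the basic monotonicity, then analysing the cases $t_\infty < \infty$ and $t_\infty = \infty$ separately. Since (\ref{heateq}) is a quasilinear parabolic equation, short-time existence and uniqueness of a smooth solution starting from an embedded curve follow from standard parabolic theory, and I would first record the induced evolution equation for the geodesic curvature, which on a surface has the form $\partial_t k = \partial_s^2 k + k^3 + G\,k$, where $s$ is the (time-dependent) arc-length parameter and $G$ is the Gauss curvature of $M$ along the curve. Together with the length decay (\ref{-k^2}) these are the analytic inputs for everything that follows. A key preliminary step is to show that embeddedness persists for all $t \in [0,t_\infty)$: I would argue via a maximum principle for the extrinsic distance, i.e. a chord-arc (Huisken-type) estimate, so that the flow can only become singular through curvature blow-up and not through the creation of a self-intersection.

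For the finite-time case $t_\infty < \infty$, the first observation is that $\sup_{S^1}|k(\cdot,t)| \to \infty$ as $t \to t_\infty$; otherwise the curvature and, by bootstrapping through its evolution equation, all of its derivatives would remain bounded and the solution could be continued past $t_\infty$, contradicting maximality. The task is then to identify the singularity. I would rescale parabolically around the blow-up point and time, extract a limiting flow, and use the preserved embeddedness together with the classification of self-similar and convex limit curves (the Abresch--Langer and Gage--Hamilton analysis) to conclude that the rescaled limit is a shrinking round circle; translated back, this means $u(\cdot,t)$ collapses to a single point of $M$. The convexity of $M$ at infinity is precisely what confines the evolving curve to a fixed compact set, supplying the geometric bounds the rescaling argument needs.

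For the infinite-time case $t_\infty = \infty$, the decay of the curvature is driven by the length energy. Integrating (\ref{-k^2}) gives
\[
\int_0^\infty \int_{S^1} k^2 \, ds\, dt = L(u(\cdot,0)) - \lim_{t\to\infty} L(u(\cdot,t)) < \infty,
\]
so there is a sequence $t_j \to \infty$ with $\int_{S^1} k^2\,ds \to 0$. Because $t_\infty = \infty$ excludes curvature blow-up, one obtains uniform-in-time bounds on $k$ and, by parabolic interior estimates applied to the evolution equation for $k$, on all of its $s$-derivatives; interpolating these bounds against the $L^2$-smallness upgrades the decay to $\|k(\cdot,t_j)\|_{C^m} \to 0$ for every $m$. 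Finally I would exploit the length monotonicity once more (or a gradient-flow/Lojasiewicz argument) to promote convergence along the sequence $t_j$ to convergence as $t \to \infty$ in each $C^\infty$-seminorm.

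I expect the main obstacle to be the singularity analysis of the finite-time case, namely ruling out every singularity type except collapse to a round point. This is exactly the deep content that distinguishes Grayson's theorem for curves on surfaces from the planar Gage--Hamilton--Grayson result, and it rests on isoperimetric and distance-comparison estimates that control the geometry uniformly up to the singular time. For this reason I would in practice quote the statement from \cite{curve_Grayson} rather than reproduce these estimates here, and use it only as the black box needed for the convergence results in this section.
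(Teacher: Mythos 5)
The paper does not prove this statement at all --- it is imported verbatim as Theorem 0.1 of \cite{curve_Grayson} --- and since you conclude by quoting that theorem as a black box, your approach coincides with the paper's. Your preceding sketch is a fair roadmap of the literature argument, but note that steps such as ``$t_\infty=\infty$ excludes curvature blow-up, hence uniform-in-time bounds on $k$'' are themselves nontrivial conclusions of Grayson's proof rather than available inputs (long-time existence alone does not rule out $\sup_{S^1}|k(\cdot,t)|\to\infty$ as $t\to\infty$), so the sketch should not be mistaken for a self-contained proof.
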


\begin{lemma} \cite[theorem 3.1]{curve_Gage} \label{3.1} ~\par
Let $u: S^1 \times [0, T] \rightarrow M$ be a solution of the evolution equation (\ref{heateq}) for some finite T. If $u(.,0)$ is an embedded curve and the curvature is bounded uniformly on $S^1 \times [0,T]$, then $u(.,t)$ are embedded for all $t \in [0,T]$.
\end{lemma}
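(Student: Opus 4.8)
The plan is to prove this by a maximum-principle / intersection-counting argument, exploiting that embeddedness of a curve is equivalent to having zero self-intersections and that self-intersections cannot be created under the flow. First I would reformulate the statement as a stability claim. Since $u(\cdot,0)$ is an embedding and, by the uniform curvature bound, the family $u(\cdot,t)$ depends continuously in $C^1$ on $t$ — the evolution $\partial_t u = kN$ with $|k|$ bounded keeps the induced speed $|u_s|^2$ comparable across the finite interval $[0,T]$, so it stays bounded away from $0$ and $\infty$ — the set of times for which $u(\cdot,t)$ is an embedding is open and contains $t=0$. Let $t_0$ be the infimum of the times at which embeddedness fails; the goal is to derive a contradiction from the assumption $t_0\leq T$.

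Second, I would analyze the nature of the first failure at $t_0$. For $t<t_0$ the curves are embedded, so the failure at $t_0$ cannot be a transversal self-crossing: a transversal crossing is stable under small perturbations and would already be present for $t$ slightly below $t_0$. Hence the failure must be a self-tangency, i.e. there are parameters $s_1\neq s_2$ with $u(s_1,t_0)=u(s_2,t_0)$ and parallel tangent lines at that point. I would then pass to a local graph representation of the two arcs meeting at the contact point, using geodesic normal coordinates on $M$ centered there: in these coordinates each arc is locally a graph $y=\varphi_i(x,t)$ over a common axis, and the curvature flow becomes a quasilinear parabolic equation for $\varphi_i$ that is uniformly parabolic precisely because $|k|$ and $|u_s|$ are bounded and bounded away from zero on $[0,T]$.

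The decisive step is a Sturmian oscillation argument in the spirit of Angenent. The difference $w=\varphi_1-\varphi_2$ solves a linear parabolic equation whose coefficients depend on the two solutions and on the ambient metric, all controlled by the curvature bound; by the Sturm theorem for such equations the number of zeros of $w(\cdot,t)$ is finite and non-increasing, dropping strictly whenever a multiple (tangential) zero forms. A newly created self-tangency at $t_0$ would correspond to the \emph{creation} of a zero of $w$, which the Sturm comparison forbids. Equivalently phrased, the count of self-intersections of $u(\cdot,t)$ cannot jump upward at $t_0$. This contradicts the choice of $t_0$ as the first time embeddedness is lost, and therefore $u(\cdot,t)$ remains embedded for all $t\in[0,T]$.

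The hard part will be the passage to the surface setting: on a general $(M,g)$ the local graph equation for $\varphi_i$ carries curvature error terms from the ambient metric, so I must check that these lower-order terms neither destroy uniform parabolicity nor invalidate the Sturm comparison. The uniform curvature bound on the finite interval $[0,T]$ is exactly what keeps all coefficients and the derivatives needed for the oscillation theorem bounded, so the Sturmian conclusion — zeros of a solution of a uniformly parabolic linear equation cannot appear out of nowhere — continues to hold. Assembling these estimates, and verifying that the local graph reduction near the tangency is legitimate under merely $W^{k,2}$-regularity lifted to smoothness by the flow, is the technical core of the argument.
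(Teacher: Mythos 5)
The paper does not actually prove this lemma: it is quoted verbatim as Theorem 3.1 of \cite{curve_Gage}, so there is no in-paper argument to compare yours against. Your reconstruction follows the standard line of reasoning for preservation of embeddedness (first-failure time, reduction to a self-tangency, local graph representation, parabolic comparison) and in substance is close to what Gage does. The main stylistic difference is that you invoke Angenent's Sturmian zero-counting theorem where the classical argument only needs the strong maximum principle applied to the difference $w=\varphi_1-\varphi_2$ of the two graph functions: one arc lies strictly on one side of the other for $t<t_0$, the graphs touch at an interior point at time $t_0$, hence $w\equiv 0$ on an earlier time slice, a contradiction. The Sturm theorem is correct here but stronger than necessary, and its hypothesis that $w$ not vanish on the lateral boundary of the parabolic cylinder requires exactly the same localization you would need for the maximum principle anyway; neither route buys you anything over the other in this application.

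There is one genuine gap. When you pass to the first failure time $t_0$ you assert the existence of parameters $s_1\neq s_2$ with $u(s_1,t_0)=u(s_2,t_0)$ and tangential contact, but a limit of near-self-intersections could a priori degenerate onto the diagonal, $s_1\to s_2$, in which case there are not two distinct arcs to compare and the graph/comparison machinery collapses. This is precisely where the uniform curvature bound does its real work, and it is the substantive content of Gage's theorem: a two-sided bound on $k$, together with the lower bound on $|u_s|$ that you correctly extract from $\partial_t|u_s|=-k^2|u_s|$ over the finite interval $[0,T]$, yields a uniform $\delta>0$ such that every sub-arc of intrinsic length at most $\delta$ is a graph over its tangent line and hence embedded. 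Consequently any self-intersection must involve parameters at intrinsic distance at least $\delta$, the contact at $t_0$ is a genuine two-arc tangency, and the local graph representation has uniformly controlled coefficients. You use the curvature bound only for uniform parabolicity of the graph equation; you must also use it to rule out the diagonal degeneration. With that point supplied (and the minor remark that $C^1$-continuity of $t\mapsto u(\cdot,t)$ near $t=0$ needs short-time parabolic regularity rather than merely the $C^0$ bound on $k$), your argument is a correct proof of the lemma.
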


If we start with an embedded curve u, whose $t_\infty = \infty$, as the curvature converges to zero, it has uniformly bounded curvature for all $T \in \mathbb{R}$. Hence the curvature flow of u only consists of embedded curves.

\begin{rmk}[equivalence of gradient flow of L and the curvature flow]

Using equations (\ref{heateq}) and (\ref{-k^2}) we get
\begin{align*}
- \int_{S^1} k^2(t,s) ds = \frac{\partial}{\partial t}L(v(t,s)) &= g^{}_{L^2(S^1, M)}\big(\nabla L (v(t,s)), \partial_t v(t,s)\big) \\
 &= \int_{S^1} g\big(\nabla L(t,s), k(t,s)N(t,s)\big) ds
\end{align*}
Since this equality holds for any v and any t we get
\[ k^2(t,s) = - g\big(\nabla L(t,s), k(t,s)N(t,s)\big) \]
Because $L(t,s) = L(v(t,s))$ is indepedent of $s \in S^1$, we know that $\nabla L(t,s)$ has no component tangential to v, and thus using the previous equation we get
\[ \nabla L(t,s) = - k(t,s)N(t,s) \]
Therefore we know that the gradient flow of the length functional and the flow by curvature are the same flow.
Hence we will use them interchangably.
\end{rmk}

Using lemmata \ref{0.1} and \ref{3.1}, we thus get that the gradient flow starting in an embedded curve u either ends in one point, or in a geodesic on M in the closure of E(M). Within E(M) all geodesic are critical points of the length. 

Thus, as long as a gradient flow line of the length remains within E(M), and thus $v^\pm \in E(M)$, the limit curves $v^\pm$ are embedded geodesics and therefore critical points of the length.

\begin{figure}[!h]
\centering
\includegraphics[width=0.69\linewidth]{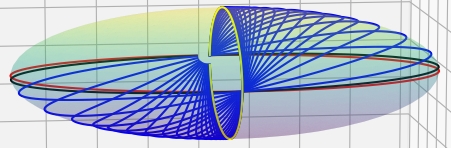}
\caption{\small The curvature flow (blue) on the ellipsoid, starting at the black curve close to the red geodesic, the shortest geodesic (yellow) as the limit of the flow}
\label{ellipsoid4}
\end{figure}

\begin{exa} \label{ellipsoid_flow}
Looking at the ellipsoid (example \ref{Ellipsoid}), where we have three embedded geodesics, most of the embedded curves will shrink to a point.
But if we look at curves bisecting the ellipsoid into two parts of equal area, analogously to theorem 5.1 in \cite{curve_Gage}, those curves will develop into one of the geodesics. One of these can be seen in figure \ref{ellipsoid4}. The curves as seen in figure \ref{ellipsoid4} form one gradient/curvature flow line from the longest embedded geodesic to the shortest geodesic.
\end{exa}

\begin{exa} \label{torus_flow}
Now let us look at the torus from example \ref{skewed_torus}. On this surface we have the topological restriction of the homotopy class of the initial curve.

All curves representing the homotopy class, which includes constant curves, i.e. the homotopy class $(0,0) \in \pi_1(T^2)$, will shrink to a constant curve, i.e. a point, under the curvature flow. There is no embedded geodesic in this homotopy class.

All curves in any other homotopy class will converge to one of the two embedded geodesics in that homotopy class. One example of such a flow can be seen in figure \ref{torus3}. In that figure we can see two curvature flows, which are part of two different gradient flow lines from the longer (red) geodesic to the shorter (yellow) geodesic.

\begin{figure}[h]
\centering
\includegraphics[width=0.9\linewidth]{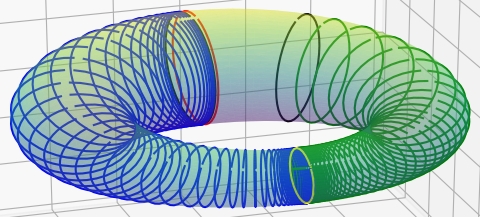}
\caption{\small Curvature flows (blue and green) on the torus, starting at the two black curves, both converging to the same yellow geodesic. Another perspective of this flow can be seen in figure \ref{torus4} in the appendix. }
\label{torus3}
\end{figure}
\end{exa}

\begin{exa} \label{cone_flow}
As a last example, let us look at the manifold R from example \ref{stepped_cylinder}. On this surface all embedded geodesics are homotopic. All embedded curves homotopic to these embedded geodesics will shrink to one of them under the curvature flow. Two such curvature flows can be seen in figure \ref{cone1}. All other embedded curves shrink to a point.

\begin{figure}[!h]
\centering
\includegraphics[width=0.8\linewidth]{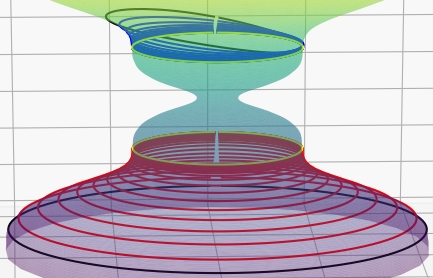}
\caption{\small Curvature flows (blue and red), starting at the two black curves, converging to the two yellow geodesics.}
\label{cone1}
\end{figure}
\end{exa}

\subsection{Convergence of flow lines}  

\begin{lemma} [Arzela-Ascoli] \label{arzela-ascoli} \cite[Theorem 2.5.14]{metric_geom} ~\par
In a compact metric space any sequence of curves with uniformly bounded lengths contains a uniformly converging subsequence.
\end{lemma}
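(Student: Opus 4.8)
The plan is to reduce the statement to the classical Arzela-Ascoli theorem for uniformly equicontinuous families of maps into a compact space. The difficulty is that a uniform bound on length says nothing about equicontinuity for an arbitrary parametrization: a curve of bounded length can oscillate arbitrarily fast in a given parametrization. The remedy, and the heart of the argument, is to reparametrize each curve proportionally to arc length.

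First I would recall that for a rectifiable curve $\gamma_n\colon[0,1]\to M$ of length $L_n$ the arc-length function $s_n(t):=\mathrm{length}(\gamma_n|_{[0,t]})$ is continuous and non-decreasing from $[0,1]$ onto $[0,L_n]$. There is a unique $1$-Lipschitz curve $\tilde\gamma_n\colon[0,L_n]\to M$ with $\gamma_n=\tilde\gamma_n\circ s_n$; rescaling the domain back to $[0,1]$ yields $\hat\gamma_n\colon[0,1]\to M$ tracing the same image and satisfying
\[ d\big(\hat\gamma_n(s),\hat\gamma_n(t)\big)\le L_n\,|s-t|\le C\,|s-t|, \]
where $C$ is the uniform length bound. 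Thus the reparametrized family $(\hat\gamma_n)$ is uniformly $C$-Lipschitz, hence uniformly equicontinuous. Replacing $(\gamma_n)$ by $(\hat\gamma_n)$ is harmless, since we only seek \emph{some} uniformly convergent subsequence of curves and both image and length are preserved.

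With equicontinuity in hand I would invoke the classical Arzela-Ascoli argument. Since $M$ is compact, the set $\{\hat\gamma_n(t)\}_n$ is precompact for every $t$, so a diagonal extraction over a countable dense subset $Q\subset[0,1]$ (say the rationals) produces a subsequence converging at every point of $Q$. Uniform equicontinuity then upgrades pointwise convergence on $Q$ to uniform convergence on all of $[0,1]$: given $\varepsilon>0$ choose $\delta$ from the Lipschitz bound, cover $[0,1]$ by finitely many $\delta$-balls centred at points of $Q$, and combine the Cauchy criterion at those finitely many points with the triangle inequality. The uniform limit is again a curve, and by lower semicontinuity of length it too has length at most $C$.

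The main obstacle is genuinely the second paragraph: arranging the uniform Lipschitz bound via arc-length reparametrization, and in particular checking that $\tilde\gamma_n$ is well defined when $\gamma_n$ is stationary on subintervals, so that $s_n$ fails to be strictly increasing. This is handled by defining $\tilde\gamma_n$ on the image of $s_n$ and extending continuously across the plateaus, which is consistent precisely because $\gamma_n$ is constant there. Everything else is the standard Arzela-Ascoli machinery, and the same argument applies verbatim with $[0,1]$ replaced by $S^1$ for loops.
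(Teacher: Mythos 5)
Your proposal is correct and follows essentially the same route as the paper's source: the paper cites this as \cite[Theorem 2.5.14]{metric_geom} without reproducing the proof, and explicitly remarks that the proof there rests on reparametrizing the curves to constant speed, which is exactly your arc-length reparametrization step, followed by the classical Arzela--Ascoli diagonal argument. Your attention to the plateaus of the arc-length function and to the fact that convergence is only claimed for the reparametrized (i.e.\ unparametrized) curves matches the paper's own caveat that the lemma works for $E(M)$ but not for parametrized loops.
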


The proof of this lemma in \cite{metric_geom} uses the ability to reparametrize curves to obtain curves with constant speed.
Hence this lemma works for the unparametrized curves in E(M), but it would not have worked with the parametrized embedded loops in $\mathcal{E}(M)$.

\begin{lemma} \label{M+-cpt}
Let M be a compact manifold, $v^-,v^+$ two embedded curves in M with $L(v^-) \geq L(v^+)$. Then the set 
\begin{equation}
\mathcal{M}^\pm_{C^0} := \big\{ v \in E^0(M) \: \big\vert \: L(v^-) \geq L(v) \geq L(v^+) \big\}
\end{equation}
has a compact closure $\overline{\mathcal{M}}^\pm_{C^0}$ in the $C^0(S^1,M)$ norm.
\end{lemma}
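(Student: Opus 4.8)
The plan is to reduce the statement directly to the Arzel\`a--Ascoli lemma (Lemma \ref{arzela-ascoli}), which is precisely tailored to this situation. Since $\overline{\mathcal{M}}^\pm_{C^0}$ sits inside the metric space $C^0(S^1,M)$, compactness is equivalent to sequential compactness, so it suffices to show that every sequence in $\mathcal{M}^\pm_{C^0}$ admits a subsequence converging uniformly to some element of $C^0(S^1,M)$.

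First I would fix an arbitrary sequence $(v_n) \subset \mathcal{M}^\pm_{C^0}$. By the defining inequalities one has $L(v^+) \leq L(v_n) \leq L(v^-)$ for every $n$, so the lengths are uniformly bounded above by $L(v^-)$. Since $M$ is a compact manifold, it is a compact metric space, so the hypotheses of Lemma \ref{arzela-ascoli} are met. Because the elements of $E^0(M)$ are curves only up to reparametrization, I would first replace each $v_n$ by its constant-speed representative; this is the normalization already used in the proof of Lemma \ref{arzela-ascoli}, and it renders the family equicontinuous, each representative being $L(v^-)$-Lipschitz. Applying the lemma then yields a subsequence $(v_{n_k})$ converging uniformly to some $v_\infty \in C^0(S^1,M)$. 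As the sequence was arbitrary, $\mathcal{M}^\pm_{C^0}$ is relatively compact, i.e. its closure $\overline{\mathcal{M}}^\pm_{C^0}$ is (sequentially, hence) compact in the $C^0(S^1,M)$ norm.

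The one point that needs care --- which I expect to be a subtlety rather than a genuine obstacle --- is that the limit $v_\infty$ need not itself lie in $\mathcal{M}^\pm_{C^0}$. Length is only lower semicontinuous under uniform convergence, so $L(v_\infty) \leq \liminf_k L(v_{n_k})$ may drop below $L(v^+)$, and $v_\infty$ may even fail to be embedded or degenerate to a point. This is exactly why the statement asserts compactness only of the closure $\overline{\mathcal{M}}^\pm_{C^0}$ and not of $\mathcal{M}^\pm_{C^0}$ itself: the extra elements acquired in the closure are precisely these shorter or non-embedded limit curves. Finally, the compactness of $M$ is essential, since it simultaneously supplies the pointwise precompactness feeding Arzel\`a--Ascoli and rules out any escape of the curves to infinity; this is the hypothesis that will later have to be replaced by a weight-function argument when $M$ is noncompact.
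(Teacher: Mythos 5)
Your proposal is correct and follows essentially the same route as the paper: constant-speed reparametrization of the sequence, uniformly bounded lengths, and the Arzel\`a--Ascoli lemma (Lemma \ref{arzela-ascoli}) to extract a uniformly convergent subsequence, which gives relative compactness in $C^0(S^1,M)$. Your remark on the lower semicontinuity of length is in fact slightly more careful than the paper, which asserts without comment that the limit curve still satisfies $L(v^-) \geq L(v) \geq L(v^+)$.
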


\begin{proof}
Take a sequence $(v_\nu) \subset \mathcal{M}^\pm_{C^0}$. Parametrize these $v_\nu(s)$ with constant speed, and since their length is bounded by $L(v^-)$, we can use the Arzela-Ascoli theorem (lemma \ref{arzela-ascoli}). We get that there exists a subsequence $v_{\nu_j}(s)$ that uniformly converges to a $v(s)$. This limit curve $v(s)$ still satisfies $L(v^-) \geq L(v) \geq L(v^+)$. 
We thus have $v_{\nu_j} \xrightarrow{C^0} v$ in $\overline{\mathcal{M}}^\pm_{C^0}$. Since any $v_\nu \in \overline{\mathcal{M}}^\pm_{C^0}$ can be approximated by $v_{\nu_i} \in \mathcal{M}^\pm_{C^0}$, we get that every $(v_\nu) \subset \overline{\mathcal{M}}^\pm_{C^0}$ has a convergent subsequence.
\end{proof}

\begin{lemma} \label{M+-cpt2}
Let M be a Riemannian manifold, $v^-,v^+$ two homotopic embedded curves in M with $L(v^-) \geq L(v^+)$. Let 
\begin{align*}
M^\pm = \big\{ p \in M \: \big\vert \: &\text{there exists a curve v homotopic to $v^-$ and an } s \in S^1 \\
&\text{ such that } v(s) = p \big\}
\end{align*}
be a subset of a compact set $K \subset M$. \\
Then the set 
\begin{equation}
\mathcal{M}^\pm_{C^0,h} = \big\{ v \in E^0(M) \: \big\vert \: L(v^-) \geq L(v) \geq L(v^+) \text{, v homotopic to v } \big\}
\end{equation}
has a compact closure $\overline{\mathcal{M}}^\pm_{C^0,h}$ in the $C^0(S^1,M)$ norm.
\end{lemma}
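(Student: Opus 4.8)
The plan is to mirror the proof of Lemma \ref{M+-cpt}, replacing the compactness of $M$ used there by the boundedness of the curves' images, which is guaranteed here by the homotopy constraint. The crucial observation is that every $v \in \mathcal{M}^\pm_{C^0,h}$ is by definition homotopic to $v^-$, so each point $v(s)$ lies in $M^\pm$, and hence the image of every such curve is contained in the compact set $K$. This confines all curves under consideration to a common compact metric space, which is exactly the ingredient previously supplied by assuming $M$ itself compact.

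First I would take an arbitrary sequence $(v_\nu) \subset \mathcal{M}^\pm_{C^0,h}$ and reparametrize each $v_\nu$ with constant speed. Since $L(v_\nu) \leq L(v^-)$ uniformly in $\nu$ and all images lie in the compact set $K$, the hypotheses of the Arzela-Ascoli theorem (Lemma \ref{arzela-ascoli}) are satisfied on $K$. This yields a subsequence $v_{\nu_j}$ converging uniformly to a limit curve $v$, whose image again lies in $K$, as $K$ is closed, and whose length satisfies $L(v) \leq L(v^-)$ by lower semicontinuity of the length under uniform convergence.

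Next I would observe that the limit $v$ belongs to the closure $\overline{\mathcal{M}}^\pm_{C^0,h}$, which is immediate since $v$ is a $C^0$-limit of elements of $\mathcal{M}^\pm_{C^0,h}$. I would then finish exactly as in Lemma \ref{M+-cpt}: any element of $\overline{\mathcal{M}}^\pm_{C^0,h}$ can be approximated in the $C^0$-norm by elements of $\mathcal{M}^\pm_{C^0,h}$, so a diagonal argument produces a uniformly convergent subsequence for every sequence in the closure, establishing that $\overline{\mathcal{M}}^\pm_{C^0,h}$ is compact.

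I expect the main obstacle to be the first step: one must argue cleanly that the homotopy hypothesis really does trap the curves inside $K$, and that constant-speed reparametrization together with the uniform length bound supplies the equicontinuity that Arzela-Ascoli requires. A minor further point is that the homotopy type and the lower length bound $L(v) \geq L(v^+)$ need not survive to the limit, since length is only lower semicontinuous; but this does not affect compactness of the closure, as we only require the limit to lie in $\overline{\mathcal{M}}^\pm_{C^0,h}$, not in $\mathcal{M}^\pm_{C^0,h}$ itself.
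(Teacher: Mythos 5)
Your proposal is correct and follows essentially the same route as the paper: the paper's proof likewise reduces to Lemma \ref{M+-cpt} by observing that the homotopy hypothesis traps all curves in the compact set $K$, then applies the Arzela--Ascoli theorem (Lemma \ref{arzela-ascoli}) to constant-speed reparametrizations with the uniform length bound $L(v^-)$. Your additional remark that the lower length bound and homotopy class need not survive to the limit (so the limit need only lie in the closure) is a sensible clarification but does not change the argument.
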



\begin{proof}
This proof works mostly the same as the proof of lemma \ref{M+-cpt}.
Take a sequence $(v_\nu) \subset \mathcal{M}^\pm_{C^0,h}$. We can use the Arzela-Ascoli theorem (lemma \ref{arzela-ascoli}) since we have curves in the compact metric space K. We get a subsequence $v_{\nu_j} \xrightarrow{C^0} v$ in $\overline{\mathcal{M}}^\pm_{C^0}$.
\end{proof}

If M is compact and thus satisfies lemma \ref{M+-cpt} it also satifies the conditions of lemma \ref{M+-cpt2} and hence $\overline{\mathcal{M}}^\pm_{C^0,h}$ is compact.

\begin{exa}
Let us use the surface of revolution R created from the graph of $f(z) = \beta (z) z + \sin(\beta (z) z) + 1$ as in example \ref{stepped_cylinder}. Let $v^-$ and $v^+$ be critical points of L on R, with $v^- = f(z_k)S^1 \times\{z_k\}$. Thus we know that $L(v^-) = 2\pi f(z_k)$. From $f(z) > f(z_k)$ for all $\vert z \vert > \vert z_k \vert$, we get that every embedded curve v with $L(v^-) \geq L(v)$ has to have at least one point in $R_{z_k} = \big\{ (x,y,z) \in R \: \big\vert \: z \in [-\vert z_k \vert,\vert z_k \vert \, ] \big\}$. We thus get that $v(s) \in R_{\vert z_k \vert+L(v^-)}$. All $R_z \subset R$ are compact.
Therefore this example satisfies the conditions of lemma \ref{M+-cpt2} and hence $\overline{\mathcal{M}}^\pm_{C^0,h}$ is compact.
\end{exa}

For the next theorem we need another quite general Arzela-Ascoli theorem.

\begin{lemma} \label{Arzela-Ascoli} [Arzela-Ascoli theorem] \cite[theorem 47.1]{topology} ~\par
Let $X$ be a space and let $(Y,d)$ be a metric space. Give $C(X,Y)$ the topology of compact convergence, let $\mathcal{F}$ be a subset of $C(X,Y)$.
\begin{enumerate}[a)]
\item If $\mathcal{F}$ is equicontinuous under d and the set
\[ \mathcal{F}_a = \{f(a) \: \vert \: f \in \mathcal{F} \} \]
has compact closure for each $a \in X$, then $\mathcal{F}$ is contained in a compact subspace of C(X,Y).
\item The converse holds if X is locally compact Hausdorff.
\end{enumerate}
\end{lemma}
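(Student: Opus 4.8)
The plan is to prove this general Arzela-Ascoli theorem by playing off three topologies on function spaces against one another: the topology of pointwise convergence (the subspace topology inherited from the product $Y^X$), the topology of compact convergence, and the compact-open topology. The key structural fact I would establish first is that, when restricted to an equicontinuous family, the topology of pointwise convergence and the topology of compact convergence coincide; this is precisely what lets me transport a compactness statement proved in the product topology back into $C(X,Y)$.

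For part (a), first I would form the closure $\mathcal{G}$ of $\mathcal{F}$ inside $Y^X$ with respect to the product topology. Since $\overline{\mathcal{F}_a}$ is compact for every $a \in X$ and $\mathcal{F} \subset \prod_{a \in X} \overline{\mathcal{F}_a}$, Tychonoff's theorem makes this product compact, so $\mathcal{G}$, being closed inside it, is compact in $Y^X$. Next I would show that equicontinuity passes to the closure: for fixed $a$ and $\epsilon$, equicontinuity gives a neighborhood $U$ of $a$ with $d(f(x),f(a)) \leq \epsilon$ for all $x \in U$ and all $f \in \mathcal{F}$, and since this inequality only constrains the two coordinates indexed by $x$ and $a$, it is a closed condition in the product topology and hence survives in $\mathcal{G}$. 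Thus $\mathcal{G}$ is equicontinuous, so in particular $\mathcal{G} \subset C(X,Y)$; and by the coincidence of topologies on equicontinuous families, $\mathcal{G}$ is compact in the topology of compact convergence as well. Since $\mathcal{F} \subset \mathcal{G}$, this exhibits $\mathcal{F}$ inside a compact subspace of $C(X,Y)$.

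For the converse, part (b), suppose $X$ is locally compact Hausdorff and $\mathcal{F}$ lies in a compact set $\mathcal{K} \subset C(X,Y)$. Compactness of $\overline{\mathcal{F}_a}$ is immediate: the evaluation map $e_a \colon f \mapsto f(a)$ is continuous in the topology of compact convergence (as $\{a\}$ is compact), so $e_a(\mathcal{K})$ is a compact set containing $\mathcal{F}_a$. For equicontinuity I would invoke that, under local compactness and the Hausdorff condition, the topology of compact convergence agrees with the compact-open topology, for which the joint evaluation map $X \times C(X,Y) \to Y$ is continuous; then, given $a$ and $\epsilon$, continuity of this map at each pair $(a,f)$ with $f \in \mathcal{K}$, together with compactness of $\mathcal{K}$ and a local compactness argument to shrink down to a single neighborhood of $a$, yields one neighborhood $U$ of $a$ that works uniformly over all of $\mathcal{F}$.

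The main obstacle is the two-topology bookkeeping rather than any single estimate. Concretely, the delicate step in (a) is proving that pointwise and compact convergence coincide on an equicontinuous family: one inclusion is trivial, but showing that a pointwise-convergent net converges uniformly on a compact set $C$ requires covering $C$ by finitely many equicontinuity neighborhoods and running a triangle-inequality argument that stays uniform in the family. In (b) the subtle point is that equicontinuity genuinely needs local compactness: it is exactly what upgrades continuity of evaluation at the individual pairs $(a,f)$ into a neighborhood of $a$ independent of $f \in \mathcal{K}$.
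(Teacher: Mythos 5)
Your argument is correct, but note that the paper does not prove this lemma at all: it is quoted verbatim as Theorem 47.1 of the cited topology reference and used as a black box. What you have written is essentially the standard proof from that reference --- Tychonoff compactness of $\prod_a \overline{\mathcal{F}_a}$, the closedness of the condition $d(f(x),f(a))\leq\epsilon$ under pointwise limits, the coincidence of the pointwise and compact-convergence topologies on equicontinuous families, and continuity of the evaluation map under local compactness for the converse --- so there is nothing to correct and no divergence from the source to report.
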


\begin{thm} [Weak compactness] \label{weak_compactness} ~\par
Let M be compact or let the conditions of lemma \ref{M+-cpt2} be fulfilled. Let $L: E(M) \rightarrow \mathbb{R}$ be the length functional on embedded curves on M. Let there be a sequence $(v_\nu)_{\nu \in \mathbb{N}} \subset  sC_{\omega^{}_\infty}^\infty(\mathbb{R},E(M))$ of gradient flow lines of the length functional with 
\[ \lim_{t \rightarrow \pm \infty} v_\nu(t) = v^\pm\]
 where $v^-,v^+$ are critical points of L. \\
Then there exists a subsequence $(v_{\nu_j})$ and a gradient flow line v so that 
\[v_{\nu_j} \xrightarrow{sC^\infty_{loc}} v\]
\end{thm}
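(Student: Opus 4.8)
The plan is to run the standard Morse-theoretic compactness argument, with the finite-dimensional ODE derivative estimates replaced by parabolic a priori estimates for the curve shrinking flow. First I would confine the flow lines to one compact part of $M$. By Lemma \ref{down_flow} each $t \mapsto L(v_\nu(t))$ is monotone nonincreasing with $\lim_{t\to-\infty} L(v_\nu(t)) = L(v^-)$ and $\lim_{t\to+\infty} L(v_\nu(t)) = L(v^+)$, so $L(v^-) \geq L(v_\nu(t)) \geq L(v^+)$ for every $t$ and every $\nu$. Since all the $v_\nu$ join the fixed endpoints $v^\pm$ they are mutually homotopic, hence each $v_\nu(t)$ lies in $\mathcal{M}^\pm_{C^0}$ (respectively $\mathcal{M}^\pm_{C^0,h}$), and Lemma \ref{M+-cpt} (respectively Lemma \ref{M+-cpt2}) shows this set has compact closure in $C^0(S^1,M)$. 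In particular all curves $v_\nu(t)$ take values in one fixed compact subset of $M$, which supplies the pointwise-in-$t$ precompactness that Arzelà–Ascoli will require.

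Next I would represent each $v_\nu$ by its curvature-flow parametrization $u_\nu : S^1 \times \mathbb{R} \to M$, so that $\partial_t u_\nu = k_\nu N_\nu$, and use equation (\ref{-k^2}) to record a uniform energy bound: integrating $\frac{\partial}{\partial t} L(v_\nu(t)) = -\int_{S^1} k_\nu^2\, ds = -\Vert \partial_t u_\nu(t)\Vert^2_{L^2(S^1)}$ over $t \in \mathbb{R}$ gives $\int_{\mathbb{R}} \Vert \partial_t u_\nu(t)\Vert^2_{L^2(S^1)}\, dt = L(v^-) - L(v^+)$, a constant independent of $\nu$. The crucial analytic step is then to upgrade this, together with the $C^0$ confinement, to bounds on all derivatives on every compact time interval that are uniform in $\nu$. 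Here I would invoke the parabolic regularity of the curve shrinking flow: since every $u_\nu$ is a curvature flow whose image stays in a fixed compact region and whose length is bounded by $L(v^-)$, the interior estimates underlying Lemmas \ref{0.1} and \ref{3.1} (see \cite{curve_Grayson, curve_Gage}) give, for each compact $[a,b] \subset \mathbb{R}$ and each Sobolev scale $m$, a bound on $u_\nu$ in $C^\infty(S^1 \times [a,b])$ uniform in $\nu$. In particular the curvature $k_\nu$, and hence $\partial_t u_\nu$, is uniformly bounded on $S^1 \times [a,b]$, which yields equicontinuity in $t$ of the maps $t \mapsto v_\nu(t)$ in each of the relevant norms.

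With these uniform bounds in hand I would apply the Arzelà–Ascoli theorem (Lemma \ref{Arzela-Ascoli}) on each compact interval: the family $\{u_\nu\}$, and likewise each of its fixed-order space-time derivatives, is equicontinuous and pointwise precompact on $S^1 \times [a,b]$, hence precompact in the topology of compact convergence at every scale of the sc-structure $E^{k}(M) = \mathcal{E}^{k}(M)/\mathcal{D}^{k}$. Choosing an exhaustion $[-N,N]$ of $\mathbb{R}$ and diagonalising over $N$, over the orders of $t$-differentiation, and over the scales $m$, I extract a single subsequence $(v_{\nu_j})$ converging to some $v$ in $sC^\infty_{loc}(\mathbb{R}, E(M))$; note that, the convergence being local in $t$, no statement about the limits $v^\pm$ of $v$ is asserted at this stage.

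Finally I would verify that the limit is itself a gradient flow line. Because the convergence is $sC^\infty_{loc}$, we may pass to the limit termwise in the flow equation (\ref{gfleq}): $\partial_t v_{\nu_j} \to \partial_t v$ locally uniformly, while $\nabla L(v_{\nu_j}) \to \nabla L(v)$ by the sc-smoothness, hence continuity, of $\nabla L$ established in Lemma \ref{smoothL}, so $\partial_t v + \nabla L(v) = 0$; that $v \in sC^\infty_{\omega_\infty}(\mathbb{R},E(M))$ then follows because $v$ solves the flow and inherits the local smoothness just proved. The main obstacle is the uniform parabolic estimate of the second paragraph: converting mere $C^0$ confinement and finite total energy into curvature and higher-derivative bounds \emph{independent of $\nu$}. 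This is precisely the point at which one must exploit the quasilinear parabolic nature of the curve shrinking flow and the a priori bounds of Grayson and Gage, rather than the soft functional-analytic machinery of Section \ref{sec2}.
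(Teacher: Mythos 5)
Your proposal follows the same global skeleton as the paper --- confinement of the curves via Lemma \ref{down_flow} together with Lemma \ref{M+-cpt} or \ref{M+-cpt2}, an Arzel\`a--Ascoli extraction, and then an upgrade of the convergence to all orders --- but the mechanism you use for the upgrade is genuinely different. The paper first proves only that the maps $t \mapsto v_\nu(t)$ are equicontinuous into $C^0(S^1,M)$ (by bounding $\Vert \nabla L(v_\nu(t))\Vert_{C^0}$ on the compact set $\overline{\mathcal{M}}^\pm_{C^0,h}$), applies Lemma \ref{Arzela-Ascoli} once to get $v_{\nu_j} \xrightarrow{C^0_{loc}} v$, uses parabolic regularity only on the \emph{limit} curve to show $v(t) \in E^\infty(M)$, and then bootstraps the \emph{convergence} through the flow equation: since $\nabla L$ is continuous (Lemma \ref{smoothL}), $\partial_t v_{\nu_j} = -\nabla L(v_{\nu_j}) \to -\nabla L(v)$, which turns $C^0_{loc}$ convergence into $C^1_{loc}$ convergence, and inductively $\partial_t^k v_{\nu_j} = F_k(v_{\nu_j},\dots,\partial_t^{k-1}v_{\nu_j})$ gives $C^k_{loc}$ convergence for every $k$. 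You instead front-load the analysis: you want a priori bounds on all space-time derivatives of $u_\nu$, \emph{uniform in $\nu$}, on every compact time interval, and then a grand diagonal Arzel\`a--Ascoli over scales and derivative orders. That buys a more self-contained compactness statement, but it is also where your argument has a real gap in justification: Lemmas \ref{0.1} and \ref{3.1} do not supply uniform-in-$\nu$ interior estimates. Grayson's theorem concerns the long-time behaviour of a single flow and Gage's theorem preserves embeddedness \emph{given} a curvature bound; neither converts $C^0$ confinement plus the energy identity $\int_{\mathbb{R}}\int_{S^1} k_\nu^2\,ds\,dt = L(v^-)-L(v^+)$ into a pointwise curvature bound independent of $\nu$. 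To close this you would need a genuine $\epsilon$-regularity or local smoothing estimate for the curve shortening flow (bounding $\sup |k_\nu|$ at time $t$ by $\int_{[t-1,t]}\int_{S^1} k_\nu^2$ and then all higher derivatives by the standard smoothing estimates); such estimates exist for eternal solutions confined to a compact region, but they are a substantive additional input that you only gesture at. The paper's bootstrap deliberately sidesteps the need for any uniform parabolic estimate by letting the continuity of $\nabla L$ do the work, at the cost of invoking an external parabolic regularity theorem once for the limit curve. Your final step, passing to the limit in equation (\ref{gfleq}), matches the paper and is fine once the convergence is established.
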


\begin{proof} ~\par
\textbf{Step 1:} $v_\nu(s)$ are equicontinuous. \\
Proof of step 1: The length of a path u from $t_1$ to $t_2$ in $C^0(S^1,M)$ is defined as 
\[ l(u) = \int_{t_1}^{t_2} \Vert \partial_t u \Vert^{}_{C^0(S^1,M)} dt \]
The metric on  $C^0(S^1,M)$ thus is
\[ d(u_1,u_2) = \inf \big\{ l(u) \: \big\vert \: u: [t_1,t_2] \rightarrow C^0(S^1,M), u(t_1) = u_1 \: \text{and } u(t_2) = u_2 \big\} \]
The gradient flow lines $v_\nu(s)$ are such paths in  $C^0(S^1,M)$. Using the gradient flow equation we get
\begin{align*}
&d\big(v_\nu(t_1),v_\nu(t_2)\big) \leq l \Big(v_\nu \Big\vert_{[t_1,t_2]} \Big) = \int_{t_1}^{t_2} \Vert \partial_t v_\nu (s) \Vert^{}_{C^0(S^1,M)} dt \\
&= \int_{t_1}^{t_2} \Vert \nabla L (v_\nu(s)) \Vert^{}_{C^0(S^1,M)} dt \leq \int_{t_1}^{t_2} c \: dt  = c (t_2- t_1)
\end{align*}
Here we have $\Vert \nabla L (v_\nu(s)) \Vert^{}_{C^0(S^1,M)} \leq c \,$ since $\nabla L$ at $v_\nu$ is a continuous function on the compact set $\overline{\mathcal{M}}^\pm_{C^0,h}$.
The compactness of this set is given by lemma \ref{M+-cpt} or \ref{M+-cpt2}. $\nabla L$ at $v_\nu$ is continuous since $v_\nu(s) \in E(M)$,  the embedding $E^2(M) \hookrightarrow E^0(M)$ is continuous and $\nabla L$ is continuous on $E^2(M)$.\\
Altogether $v_\nu(s)$ are equicontinuous.

\textbf{Step 2:} There exists a subsequence $v_{\nu_j}$ of the gradient flow lines $v_\nu$ and a gradient flow line $v$ such that $v_{\nu_j} \xrightarrow{C^0_{loc}} v$ in $C^0(\mathbb{R}, E^0(M))$.\\
Proof of step 2: To prove this step we want to use the Arzela-Ascoli theorem (lemma \ref{Arzela-Ascoli}). Take $X = \mathbb{R}$, $Y = E^0(M)$ and $\mathcal{F} = (v_\nu) \subset C^0(\mathbb{R}, E^0(M))$.
Using step 1 we get that $(v_\nu)$ is equicontinuous. The set $\mathcal{F}_s = ( v_\nu(s) )$ is relatively compact since it is a subset of a compact set $\overline{\mathcal{M}}^\pm_{C^0,h}$ as in lemma \ref{M+-cpt} or \ref{M+-cpt2}. \\
Using Arzela-Ascoli we thus get, that $(v_\nu)$ are part of a compact subset of  $C^0(\mathbb{R}, E^0(M))$ and thus $v_{\nu_j} \xrightarrow{C^0_{loc}} v$. \\
Since $\nabla L$ is continuous, we get $\nabla L(v_{\nu_j}(s)) \rightarrow \nabla L(v(s))$.

\textbf{Step 3:} The gradient flow line v lies in $C^0(\mathbb{R}, E^\infty(M))$. \\
Proof of step 3: Since v is a gradient flow line of L, it is also a flow line of the curvature flow and therefore, using a constant speed parametrization
\[ \partial_t v(t,s) = k(t,s)N(t,s) = \partial_s^2 v(s,t) \]
Because $v(s_0) \in E^0(M)$ we get that $v(s_0,t) \in L^2(S^1,M)$ for any $s_0 \in \mathbb{R}$. Using theorem 2.4 in \cite{parabolic_regularity} and the fact that embedded curves stay embedded, we get that $v(s,t) \in E^k(M)$ for any $s > s_0$ and $k \in \mathbb{N}$. 

\textbf{Step 4:} Bootstrapping to $sC_{\omega^{}_\infty}^\infty(\mathbb{R},E(M))$\\
We have previously shown
\[
\partial_t v_{\nu_j} = - \nabla L(v_{nu_j}) \xrightarrow{C^0_{loc}} - \nabla L(v) = \partial_t v
\]
and thus get
\[
v_{\nu_j}  \xrightarrow{C^1_{loc}}  v
\]
Be $s_0 \in \mathbb{R}$, choose local coordinates $U \subset E^\infty(M)$ around $v(s_0)$. For an $\epsilon > 0$ small enough and $j_0 \in \mathbb{N}$ big enough, we thus know that $v_{\nu_j}(s) \in U$ for all $s \in (s_0 - \epsilon,s_0 + \epsilon)$ and $j \geq j_0$. \\
Using the fact, that the length functional is sc-smooth on U (see lemma \ref{smoothL}), we know that $\nabla L$ is also sc-smooth and thus the chain rule and Leibniz rule can be applied to
\[
\partial_t v_{\nu_j} = - \nabla L (v_{\nu_j})
\]
This results in the equation
\[
\partial^k_t v_{\nu_j} = F_k(v_{\nu_j}, \partial_t v_{\nu_j}, .... ,\partial^{k-1}_t v_{\nu_j})
\]
where the $F_k$ are continuous for all $k \in \mathbb{N}$. \\
Via induction over k, using
\[
\partial^k_t v_{\nu_j} = F_k(v_{\nu_j}, \partial_t v_{\nu_j}, .... ,\partial^{k-1}_t v_{\nu_j}) \xrightarrow{C^0_{loc}} F_k(v, \partial_t v, .... ,\partial^{k-1}_t v) =  \partial^k_t v
\]
and
\[
v_{\nu_j} \xrightarrow{C^{k-1}_{loc}} v
\]
we get that
\[
v_{\nu_j} \xrightarrow{C^{k}_{loc}} v
\]
and we we thus get analogously to proposition \ref{scinfty}
\[
v_{\nu_j} \xrightarrow{sC^\infty_{loc}} v
\]

\end{proof}

\subsection{Breaking of flow lines}  

Let f be any functional, for example the length L.

\begin{defn} ~\par
Let $v^\pm \in crit(f)$. A \textbf{broken gradient flow line} from $v^-$ to $v^+$ is a tuple $u = (v^1, . . . , v^m)$ for some $m \in \mathbb{N}$ so that the following properties hold:
\begin{enumerate}[i)]
\item for all $k \in \{1,...,m\}$ the gradient flow line $v^k$ is nonconstant
\item $\displaystyle{\lim_{t \rightarrow - \infty} v^1(t) = v^-}$
\item $\displaystyle{\lim_{t \rightarrow \infty} v^k(t) = \lim_{t \rightarrow - \infty} v^{k+1}(t)}$ for all $k \in \{1,...,m-1\}$
\item $\displaystyle{\lim_{t \rightarrow  \infty} v^m(t) = v^+}$
\end{enumerate}
\end{defn}

The reparametrization of a gradient flow line $v$ by a $r \in \mathbb{R}$ is defined by $r_\ast v(t) = v(r+t)$.

\begin{defn} ~\par
Let $v_\nu \in sC^\infty(\mathbb{R},E)$ be a sequence of gradient flow lines, where there exist $v^\pm \in crit(f)$ so that $\displaystyle{\lim_{t \rightarrow \pm \infty} v_\nu(t) = v^\pm}$.
Let $u = (v^1, . . . , v^m)$ be a broken gradient flow line from $v^-$ to $v^+$.

Then the $v_\nu$ \textbf{Floer-Gromov converges} to $u$, if there exist a sequence $r^k_\nu \in \mathbb{R}$ for each $k \in \{1,...,m\}$ such that 
\[
(r_\nu^k)_\ast v_\nu \xrightarrow{sC^\infty_{loc}} v^k
\]
\end{defn}

Now we can talk about the convergence of gradient flow lines of the length towards broken gradient flow lines.

\begin{thm} [Breaking of gradient flow lines] ~\par
Let $v_\nu \in sC^\infty(\mathbb{R},E)$ be a sequence of gradient flow lines of the length, where there exist $v^\pm \in crit(L)$, so that $\displaystyle{\lim_{t \rightarrow \pm \infty} v_\nu(t) = v^\pm}$.
Let there be only finitely many critical values of L with value less than $L(v^-)$. Let all the critical points with length less than $L(v^-)$ be isolated critical points. \\
Then there exists a subsequence $v_{\nu_j}$ and a broken gradient flow line \\${u = (v^1, . . . , v^n)}$ from $v^-$ to $v^+$, so that 
\[
v_{\nu_j} \xrightarrow{Floer-Gromov} u
\]
\end{thm}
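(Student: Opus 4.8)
The plan is to produce the broken flow line by recentering the given sequence at each regular level lying between two consecutive critical values, extracting one flow line per level via Theorem \ref{weak_compactness} together with a single diagonal subsequence, and then letting the finiteness and isolation hypotheses collapse these into finitely many genuine pieces whose asymptotic ends match up.

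First I record the critical data. Since there are only finitely many critical values of $L$ that are at most $L(v^-)$, I list them as $L(v^-)=c_0>c_1>\dots>c_N=L(v^+)$ and fix, for each $k\in\{1,\dots,N\}$, a regular value $\lambda_k$ with $c_k<\lambda_k<c_{k-1}$. By Lemma \ref{down_flow} every $v_\nu$ is strictly $L$-decreasing and runs from level $c_0$ down to $c_N$, so there is a unique time $r^k_\nu\in\mathbb{R}$ with $L(v_\nu(r^k_\nu))=\lambda_k$. Each recentered sequence $(r^k_\nu)_\ast v_\nu$ still has the asymptotic limits $v^\pm$, so Theorem \ref{weak_compactness} applies; after passing to one diagonal subsequence $\nu_j$ valid for all $k$ simultaneously, I obtain gradient flow lines $v^1,\dots,v^N$ with $(r^k_{\nu_j})_\ast v_{\nu_j}\xrightarrow{sC^\infty_{loc}}v^k$. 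As $L(v^k(0))=\lambda_k$ is a regular value, each $v^k$ is nonconstant. A standard $\omega$-limit argument then shows every $v^k$ has critical asymptotic ends: monotonicity and boundedness of $L$ give $\int_{\mathbb{R}}\Vert\nabla L(v^k(t))\Vert^2\,dt<\infty$, so $\nabla L(v^k(t))\to 0$ along suitable times $t\to\pm\infty$, and combining this with compactness of the relevant sublevel set (Lemma \ref{M+-cpt} or \ref{M+-cpt2}) and isolation of the critical points below $c_0$ upgrades this to genuine convergence $\lim_{t\to\pm\infty}v^k(t)=q^\pm_k\in crit(L)$.

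The crux is to identify the genuine breaks and match their ends. For each $k$ I consider $\Delta^k_\nu:=r^{k+1}_\nu-r^k_\nu>0$. Along the subsequence there are two cases. If $\Delta^k_\nu$ stays bounded, then after a further extraction $\Delta^k_\nu\to c\in\mathbb{R}$ and $v^{k+1}=(c)_\ast v^k$ is the same flow line time-translated, so $v^k$ simply crosses the value $c_k$ in finite time without lingering. If instead $\Delta^k_\nu\to\infty$, the flow lingers at a critical point of level $c_k$ (the unique critical value in $[\lambda_{k+1},\lambda_k]$): choosing $t_\nu\to+\infty$ and $s_\nu\to-\infty$ slowly with $t_\nu-s_\nu=\Delta^k_\nu$, the point $v_{\nu_j}$ at the common absolute time $t_\nu+r^k_{\nu_j}=s_\nu+r^{k+1}_{\nu_j}$ is simultaneously close to $q^+_k$ and to $q^-_{k+1}$, which forces $q^+_k=q^-_{k+1}$. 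Keeping one representative from each maximal block of indices on which $\Delta$ is bounded, I obtain distinct nonconstant pieces $(v^1,\dots,v^n)$ (after relabelling) joined at the genuine breaks. For the two extreme ends, strict monotonicity (Lemma \ref{down_flow}) shows that each $v_\nu$ is near level $c_0$ only near $v^-$ and near level $c_N$ only near $v^+$; since there is no critical value in $(\lambda_1,c_0)$ or in $(c_N,\lambda_n)$, the backward-infinite lingering of $v^1$ is concentrated at $v^-$ and the forward-infinite lingering of $v^n$ at $v^+$, giving $q^-_1=v^-$ and $q^+_n=v^+$. Thus $u=(v^1,\dots,v^n)$ is a broken gradient flow line from $v^-$ to $v^+$, and by construction $v_{\nu_j}\xrightarrow{Floer\text{-}Gromov}u$ with the $r^k_{\nu_j}$ (one per retained block) serving as the recentering sequences.

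I expect this last step to be the main obstacle: proving $\Delta^k_\nu\to\infty$ at a genuine break, and ruling out that the flow jumps between two distinct critical points sharing the value $c_k$, both rest squarely on the isolation hypothesis, and making the lingering estimates precise in the sc-framework — rather than for a finite-dimensional Morse function, where they are classical — is the delicate point. The finiteness of the critical values is exactly what guarantees that the procedure terminates, yielding $n\le N$ pieces.
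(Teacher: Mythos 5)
Your overall strategy is genuinely different from the paper's: you recenter at crossings of fixed regular levels $\lambda_k$ chosen between consecutive critical values, extract all limits at once by a diagonal argument, and then try to match the asymptotic ends a posteriori. The paper instead builds the matching into the construction: each new recentering time is defined as the \emph{exit time} from a small neighbourhood $W$ of the previously found limit critical point, so that the recentered trajectory lies in $W$ on $[-R_j,0]$ with $R_j\to\infty$, which immediately pins the backward limit of the next piece to $\overline{W}\cap crit(L)$. Your route is viable in principle, but the step you yourself flag as delicate is exactly where the proposal breaks down, and the argument you offer for it does not work.

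Concretely, the claim that one can choose $t_j\to+\infty$, $s_j\to-\infty$ with $t_j-s_j=\Delta^k_{\nu_j}$ so that $v_{\nu_j}(r^k_{\nu_j}+t_j)$ is close to $q^+_k$ \emph{and} $v_{\nu_j}(r^{k+1}_{\nu_j}+s_j)$ is close to $q^-_{k+1}$ is unjustified. The convergence $(r^k_{\nu_j})_\ast v_{\nu_j}\to v^k$ is only $sC^\infty_{loc}$, so closeness to $q^+_k$ is guaranteed only for $t_j\le T_j$, where $T_j\to\infty$ is a rate dictated by the diagonal extraction; likewise $-s_j\le S_j$ for the other end. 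The constraint $t_j-s_j=\Delta^k_{\nu_j}$ forces $\Delta^k_{\nu_j}\le T_j+S_j$, and nothing prevents $\Delta^k_{\nu_j}$ from growing much faster than $T_j+S_j$ — in which case no admissible pair $(t_j,s_j)$ exists and the "common time" is never simultaneously in both windows. The same gap reappears at the extreme ends: your assertion that $v_\nu$ "is near level $c_0$ only near $v^-$" conflates closeness of the \emph{value} $L$ to $c_0$ with closeness of the \emph{point} to $v^-$; if several isolated critical points share the value $c_0$ (the hypotheses do not exclude this), the backward limit $q^-_1$ could a priori be one of the others. What is actually needed at both places is a "no energy, no motion" estimate: across the gap the total drop of $L$ tends to $0$, while a trajectory travelling from a small ball around $q^+_k$ to a small ball around a \emph{different} critical point must traverse a region at positive distance from all (isolated) critical points inside the compact set $\overline{\mathcal{M}}^\pm$, where $\Vert\nabla L\Vert\ge\delta>0$; this forces a drop of $L$ of at least $\delta\rho>0$, a contradiction. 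You neither state nor prove such an estimate, so the identification $q^+_k=q^-_{k+1}$ (and $q^-_1=v^-$, $q^+_n=v^+$) is not established. Supplying that lemma would repair your proof; alternatively, switching to the paper's exit-time recentering makes the matching essentially automatic.
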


\begin{proof}
Proof by induction over $n \in \mathbb{N}$: \\
A(n): There exist a subsequence $v_{\nu_j}$, $u_n = (v^1, . . . , v^l)$ a broken gradient flow line for some $l \leq n$ and a sequence $r^k_j \in \mathbb{R}$ for each $k \in \{1,...,l\}$, such that
\begin{enumerate}[i)]
\item $\displaystyle{(r_j^k)_\ast v_{\nu_j} \xrightarrow{sC^\infty_{loc}} v^k}$ for all $k \in \{1,...,l\}$
\item $\displaystyle{\lim_{t \rightarrow - \infty} v^1(t) = v^-}$
\item If $l < n$, then $\displaystyle{\lim_{t \rightarrow  \infty} v^l(t) = v^+}$
\end{enumerate}

Proof of A(1): \\
Since the critical points are isolated, we can choose an open neighbourhood V of $v^-$, such that $\overline{V} \cap crit(L) = \{ v^- \}$ and V is contained in some $\epsilon^-$-ball around $v^-$. 
Define $r^1_\nu := \inf \{ t \in \mathbb{R} \vert v_\nu(t) \notin V \} < \infty$, the ''first exit time''. Using the weak compactness theorem (theorem \ref{weak_compactness}), we get a subsequence $v_{\nu_j}$ and a gradient flow line $v^1$ such that 
\[
(r_{\nu_j}^1)_\ast v_{\nu_j} \xrightarrow{sC^\infty_{loc}} v^1
\]
Since we have $v^1(0) = \lim_{j \rightarrow \infty} (r_{\nu_j}^1)_\ast v_{\nu_j}(0) = \lim_{j \rightarrow \infty}  v_{\nu_j}(r_{\nu_j}^1) \in \partial V$ and ${\partial V \cap crit(L) = \emptyset}$, we get that $v^1$ is not constant.

We therefore define $u^1 = (v^1)$ and $r_j^1 = r_{\nu_j}^1$.
By construction this fulfills properties i) and iii).
Since we have $\displaystyle{(r_{\nu_j}^1)_\ast v_{\nu_j} \xrightarrow{sC^\infty_{loc}} v^1}$ and $(r_{\nu_j}^1)_\ast v_{\nu_j}(t) \in \overline{V}$ for all $t \leq 0$, we get that $\displaystyle{\lim_{t \rightarrow -\infty} v^1(t) \in \overline{V}}$ for all $t \leq 0$. Because $v^-$ is the only critical point in $\overline{V}$ and V is bounded, we get that $\displaystyle{\lim_{t \rightarrow -\infty} v^1(t) = v^-}$ and thus property ii) is fulfilled.

\textbf{Induction step} $A(n) \Rightarrow A(n+1)$: \\
Take $u_n = (v^1, . . . ,v^l)$, a broken gradient flow line fulfilling A(n). \\
\textbf{Case 1} $\displaystyle{\lim_{t \rightarrow \infty} v^l(t) = v^+}$: \\

Set $u_{n+1} = u_n$ and thus $u_{n+1}$ fulfills A(n+1). \\
\textbf{Case 2} $\displaystyle{\lim_{t \rightarrow \infty} v^l(t) \neq v^+}$:

We hence have $l = n$. Also there exists a $\displaystyle{\lim_{t \rightarrow \infty} v^n(t) = (v^n)^+ \in crit(L)}$ such that $L((v^n)^+) \geq L(v^+)$, because $(v^n)^+$ is the limit of the flow by curvature and therefore either an embedded geodesic or a point. Since $\displaystyle{L((v^n)^+) = \lim_{t \rightarrow \infty} \lim_{j \rightarrow \infty} L\big( (r_{\nu_j}^n)_\ast v_{\nu_j} (t) \big) \geq \lim_{t \rightarrow \infty} \lim_{j \rightarrow \infty} L(v^+) = L(v^+) }$ using lemma \ref{down_flow}, it can not be a constant loop, and therefore is an embedded geodesic.

Take an open neighborhood W of $(v^n)^+$, so that $\overline{W} \cap crit(L) = \{ (v^n)^+ \}$ and W is contained in some $\epsilon_n$-ball around $(v^n)^+$. Since $\displaystyle{\lim_{t \rightarrow \infty} v^n(t) = (v^n)^+}$ there exists a $t_0 \in \mathbb{R}$ so that $v^n(t_0) \in W$. Using 
\[
(r_{\nu_j}^n)_\ast v_{\nu_j} \xrightarrow{sC^\infty_{loc}} v^n
\] 
we get that there exists $j_0 \in \mathbb{N}$, so that for all $j \geq j_0$ we have ${(r_{\nu_j}^n)_\ast v_{\nu_j}(t_0) \in W}$. Define 
$R_j = \inf \{ r \geq 0 \, \vert \, (r_{\nu_j}^n)_\ast v_{\nu_j}(t_0 + r) \notin V \}$. Then from 
\[
\lim_{t \rightarrow \infty} (r_{\nu_j}^n)_\ast v_{\nu_j}(t) = v^+ \notin W
\]
it follows that $R_j \leq \infty$. But since 
\[
(r_{\nu_j}^n)_\ast v_{\nu_j} \xrightarrow{sC^\infty_{loc}} v^n
\] 
we get that $\displaystyle{\lim_{j \rightarrow \infty} R_j = \infty}$. Let us now define $r_{\nu_j}^{n+1} := r_{\nu_j}^n + t_0 + R_j $.

From theorem \ref{weak_compactness} we get that there exists a further subsequence and a gradient flow line $v^{n+1}$, so that
\[
(r_{\nu_j}^{n+1})_\ast v_{\nu_j} \xrightarrow{sC^\infty_{loc}} v^{n+1}
\]
This gradient flow line $v^{n+1}$ is nonconstant, as 
\[
v^{n+1}(0) = (r_{\nu_j}^{n+1})_\ast v_{\nu_j}(0) = v_{\nu_j}(r_{\nu_j}^n + t_0 + R_j) = (r_{\nu_j}^n)_\ast v_{\nu_j} (t_0 + R_j) \in \partial W
\]
and $\partial W \cap crit(L) = \emptyset$.

\begin{claim}
\[
\lim_{t \rightarrow -\infty} v^{n+1}(t) = (v^n)^+ 
\]
\end{claim}
\noindent \textbf{Proof of the claim}: Argument by contradiction

Assume that
\[
\lim_{t \rightarrow -\infty} v^{n+1}(t) =: \hat{v} \neq (v^n)^+ 
\]
($\hat{v}$ is either an embedded geodesic, i.e. an element in crit(L) or it is not an element in E(M)) \\
\textbf{Case A} $L(\hat{v}) = \infty$ \\
Using lemma \ref{down_flow} we get $\displaystyle{L(\hat{v}) = \lim_{t \rightarrow \infty} \lim_{j \rightarrow \infty} L\big( (r_{\nu_j}^{n+1})_\ast v_{\nu_j} (t) \big) \leq L(v^-) }$ and thus have a contradiction \\
\textbf{Case B} $\hat{v} \in crit(L)$ or $\hat{v} \in \partial E(M)$ 

Choose $U \subset E(M)$ an open neighbourhood of $\hat{v}$ such that $W \cap U = \emptyset$.

From $\displaystyle{\lim_{t \rightarrow -\infty} v^{n+1}(t) = \hat{v} }$ and $\hat{v} \in U$ it follows that there exists a $t_1 < 0$, such that $v^{n+1}(t) \in U$ for all $t < t_1$.
Therefore there exists a $j_1$ such that for all $j \geq j_1$ we have $(r_{\nu_j}^{n+1})_\ast v_{\nu_j}(t_1) \in U$.

But we also know that $(r_{\nu_j}^n)_\ast v_{\nu_j}(t) \in W$ for all $t \in [t_0,t_0 + Rj]$. From this it follows through calculating
\[
(r_{\nu_j}^n)_\ast v_{\nu_j}(t) = v_{\nu_j}(t + r_{\nu_j}^n) = v_{\nu_j}(t + r_{\nu_j}^{n+1} - t_0 - R_j) = (r_{\nu_j}^{n+1})_\ast v_{\nu_j}(t - t_0 - R_j)
\]
that $(r_{\nu_j}^{n+1})_\ast v_{\nu_j}(t) \in W$ for all $t \in [-R_j,0]$.

As we have that $\displaystyle{\lim_{j \rightarrow \infty} R_j = \infty}$, we know that there exists a $j_2 \geq j_0,j_1$, such that $R_{j_2} < t_1 < 0$. 
Altogether we thus get $(r_{\nu_j}^{n+1})_\ast v_{\nu_j}(t_1) \in W \cap U$, which is a contradiction to $W \cap U = \emptyset$. This proves the claim. \qed

Setting $u_{n+1} = (v^1, . . . , v^n,v^{n+1})$ we get a broken gradient flow line satisfying A(n+1). Thus the induction is complete.

Because there are only finitely many critical values less than $L(v^-)$ and by lemma \ref{down_flow} the gradient flow lines flow strictly downwards for nonconstant gradient flow lines, any broken gradient flow line $u_{n} = (v^1, . . . , v^n)$, with $\displaystyle{\lim_{t \rightarrow -\infty} v^1 = v^-}$, can only have finitely many breaking points.
If we take a broken gradient flow line $u_{n} = (v^1, . . . , v^n)$ fulfilling A(m) for $m>n$, using the property iii) of A(n) yields $\displaystyle{\lim_{t \rightarrow \infty} v^n = v^+}$.
Combining the last two sentences we get that there exists a $n \in \mathbb{N}$, such that $\displaystyle{\lim_{t \rightarrow \infty} v^n = v^+}$.

\end{proof}

\newpage

\begin{appendices}
\section {Visualization of the curvature flow} 

Accompanying this thesis, there is a programming task, to code a implementation of the curvature flow in a python program, in order to be able to visualize the curvature flow.

As the resolution of the curves is only finite, there are some plots, where the curves are not quite smooth.

In the images there will be a small gap at the beginning/end of each curve. This gap does not influence the calculations, it only appears in the depiction of the flow.

First let us look at the curve shrinking in the plane. As outlined in \cite{plane_curve_GH,plane_curve_Grayson} the curve becomes more and more convex and shrinks to a point under the curvature flow. The curve as it shrinks becomes more and more circular.

\begin{figure}[h]
\centering
\includegraphics[width=0.75\linewidth]{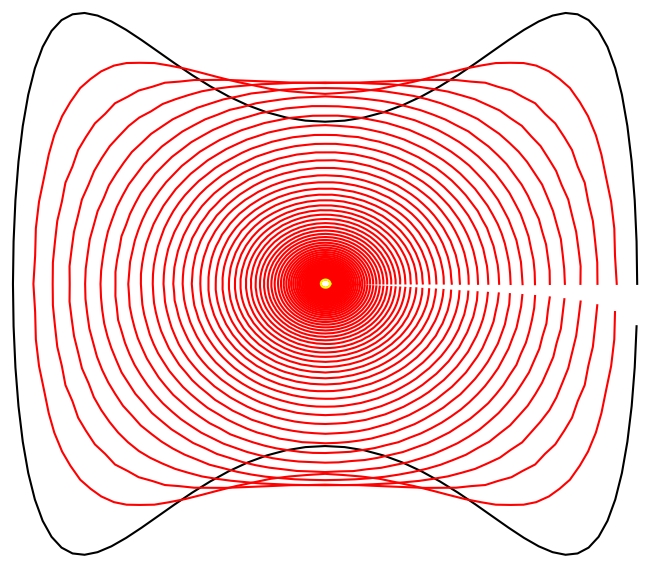}
\caption{\small From the initial curve (black) the curves under the flow first become convex and shrinks, becoming more and more circular. The curvature flow would fully shrink to a point, in the plot we end up with a small yellow near circular curve around that point.}
\label{convex0}
\end{figure}

\begin{figure}[h]
\centering
\begin{subfigure}{0.49\linewidth}
\includegraphics[width=\linewidth]{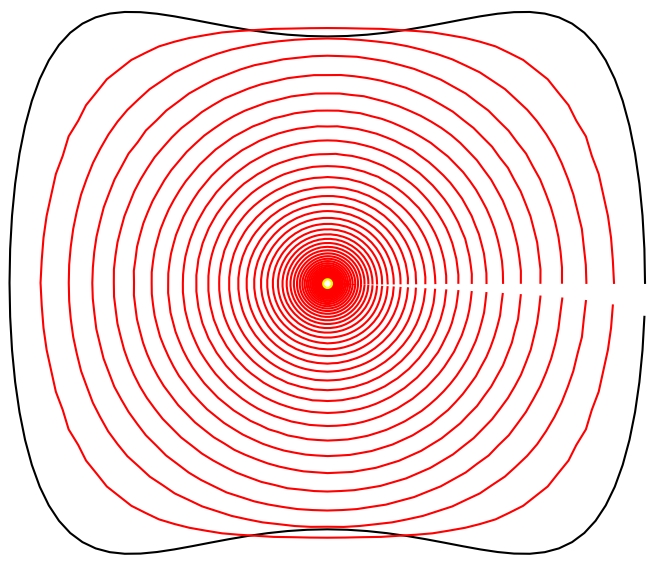}
\end{subfigure}
\begin{subfigure}{0.49\linewidth}
\includegraphics[width=\linewidth]{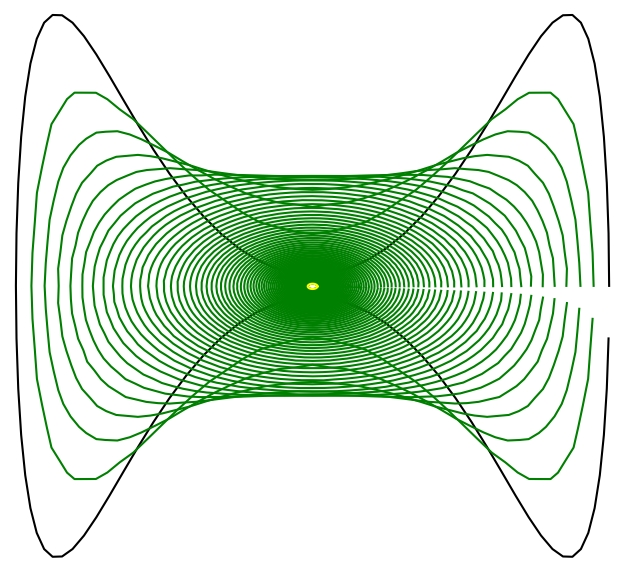}
\end{subfigure}
\caption{\small Two further examples of curve shrinking. On the left we see how fast the curves become circular. On the right we see how this works on a non-starshaped initial curve.}
\label{convex1}
\end{figure}

The curve shrinking can also applied to non-embedded curves, as can be seen in the images in figure \ref{convex2}.

\begin{figure}[!h]
\centering
\begin{subfigure}{0.49\linewidth}
\includegraphics[width=\linewidth]{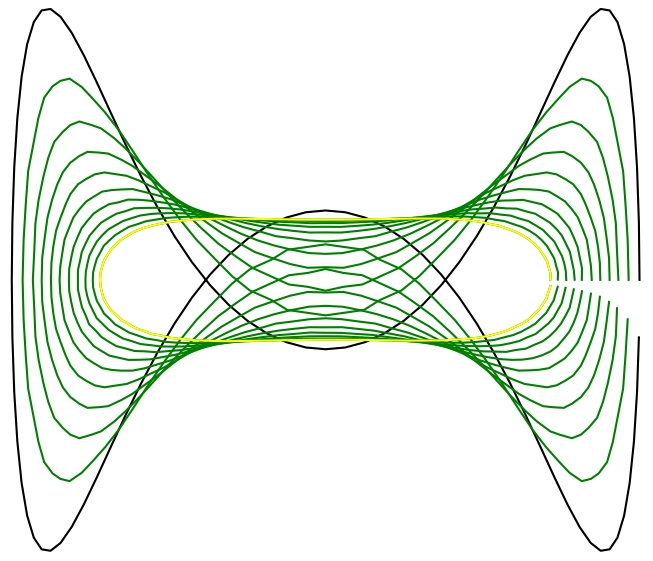}
\end{subfigure}
\begin{subfigure}{0.49\linewidth}
\includegraphics[width=\linewidth]{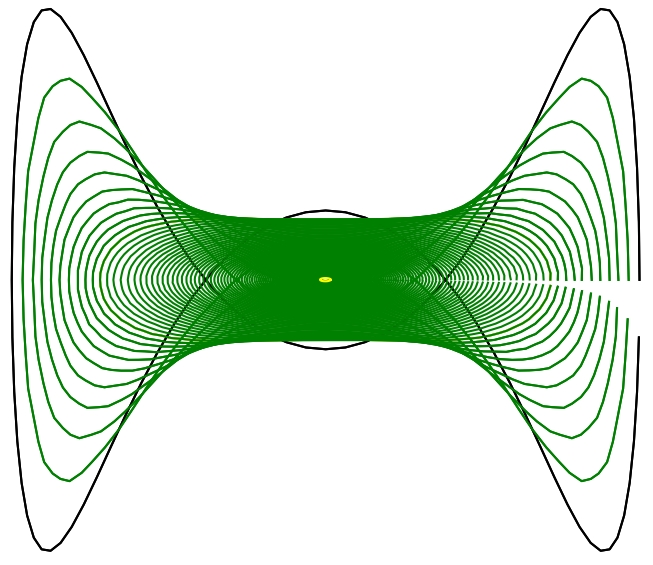}
\end{subfigure}
\caption{\small Curvature flow of a non-embedded curve, on the left becoming embedded and convex after a short time, on the right fully shrinking to a point}
\label{convex2}
\end{figure}

Now we will look at curves that are embedded in the manifolds, that have been looked at in previous examples.

As in example \ref{ellipsoid_flow} we will now look at an ellipsoid. First we look at a flow along a gradient flow line from the longest to the shortest embedded geodesic. One example of this is the plot on the title page.

\begin{figure}[h]
\centering
\includegraphics[width=0.7\linewidth]{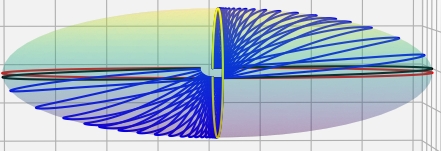}
\caption{\small Curvature flow starting at the black curve, like in figure \ref{ellipsoid4}, to the shortest embedded geodesic}
\label{Ellipsoid5}
\end{figure}

Next we look at the curvature flow from the longest to the second longest embedded geodesic.

\begin{figure}[!h]
\centering
\begin{subfigure}{0.6\linewidth}
\includegraphics[width=\linewidth]{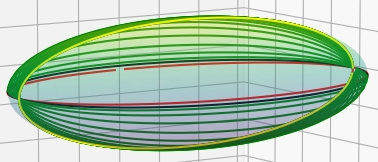}
\end{subfigure}
\begin{subfigure}{0.39\linewidth}
\includegraphics[width=\linewidth]{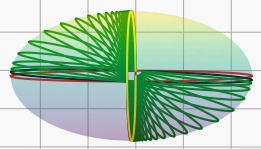}
\end{subfigure}
\caption{\small Curvature flow starting at the black curve near the longest geodesic (red), flowing to the second longest embedded geodesic (yellow)}
\label{Ellipsoid2}
\end{figure}

\begin{figure}[!h]
\centering
\includegraphics[width=0.75\linewidth]{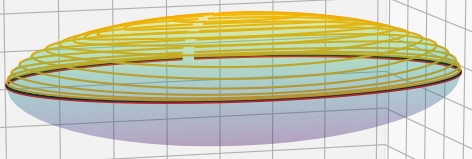}
\caption{\small Curvature flow starting at the black curve, just above a geodesic (red), flowing towards a single point at the top}
\label{Ellipsoid8}
\end{figure}

Ultimately most embedded curves on the ellipsoid shrink to a point, as probably only curves bisecting the ellipsoid into two parts of equal area shrink to one of the geodesics, as in theorem 5.1 in \cite{curve_Gage}.

As the next example, we will look at the torus from example \ref{skewed_torus}, where we already have one plot of the curvature flow in figure \ref{torus3}. 

\begin{figure}[h]
\centering
\includegraphics[width=0.62\linewidth]{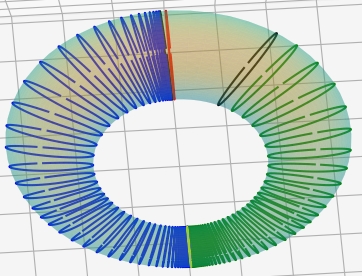}
\caption{\small Curvature flows (blue and green) starting at two curves (black), both converging to the same geodesic (yellow), same flow as in figure \ref{torus3}, but from a different perspective. The red curve is the longer geodesic.}
\label{torus4}
\end{figure}

Naturally we can also look at curves homotopic to another embedded geodesic, and the curvature flow of such curves.

\begin{figure}[!h]
\centering
\includegraphics[width=0.95\linewidth]{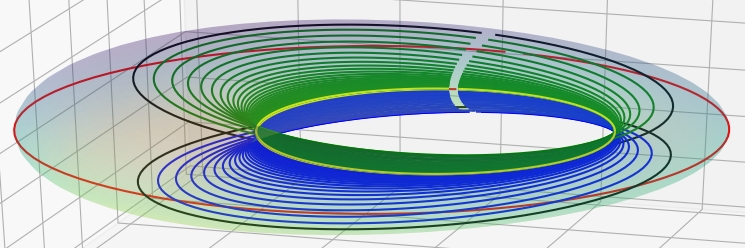}
\caption{\small Curvature flows (blue and green) starting at two curves (black), both converging to the same geodesic (yellow). The red curve is the longer geodesic.}
\label{torus5}
\end{figure}

We can also look at curves in the homotopy class $(1,1) \in \pi_1(T^2)$.

\begin{figure}[h]
\centering
\includegraphics[width=0.85\linewidth]{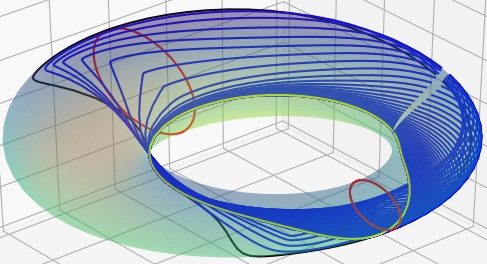}
\caption{\small Curvature flow (blue) starting at a curve (black), converging to the yellow geodesic. The red curves are geodesics for reference.}
\label{torus6}
\end{figure}

Now as a last set of images of the curvature flow, we look at the surface of revolution R from example \ref{stepped_cylinder}. 
We have already seen the curvature flow of two curves on this manifold in example \ref{cone_flow}. 
In the first figure we can see how a curve around a geodesic flows towards that geodesic.

\begin{figure}[!h]
\centering
\begin{subfigure}{0.6\linewidth}
\includegraphics[width=\linewidth]{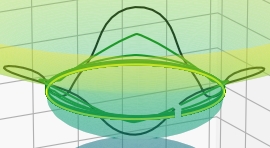}
\end{subfigure}
\begin{subfigure}{0.39\linewidth}
\includegraphics[width=\linewidth]{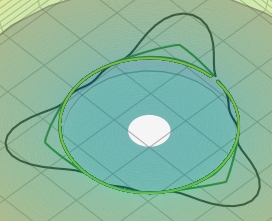}
\end{subfigure}
\caption{\small Curvature flow starting at the black curve, flowing to one of the second shortest embedded geodesics (yellow). The picture on the right is the same flow from above.}
\label{Ellipsoid2}
\end{figure}

In the next two figures we see curves flowing towards the shortest embedded geodesic.

\begin{figure}[!h]
\centering
\begin{subfigure}{0.9\linewidth}
\includegraphics[width=\linewidth]{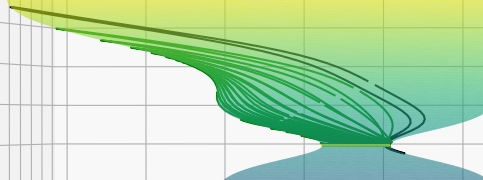}
\end{subfigure}
\begin{subfigure}{0.7\linewidth}
\includegraphics[width=\linewidth]{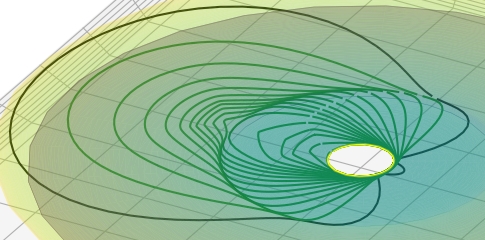}
\end{subfigure}
\caption{\small Curvature flow starting at the black curve, flowing to the shortest embedded geodesic (yellow). The upper picture is a side view of the flow, the lower picture depicts the flow from above.}
\label{Ellipsoid2}
\end{figure}

\begin{figure}[!h]
\centering
\begin{subfigure}{0.6\linewidth}
\includegraphics[width=\linewidth]{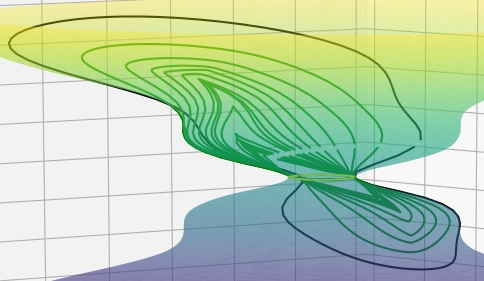}
\end{subfigure}
\begin{subfigure}{0.39\linewidth}
\includegraphics[width=\linewidth]{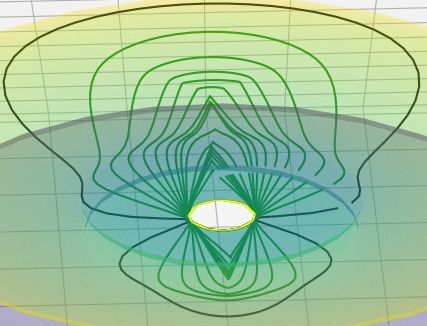}
\end{subfigure}
\caption{\small Curvature flow starting at the black curve, flowing to the shortest embedded geodesic (yellow). The picture on the right is the same flow from above.}
\label{Ellipsoid2}
\end{figure}

\newpage

Now as a last set of images, we look what happens, if we view the flow in backwards time. The curvature flow in backwards time is not well defined and thus we expect to get chaotic pictures.

\begin{figure}[!h]
\centering
\includegraphics[width=0.6\linewidth]{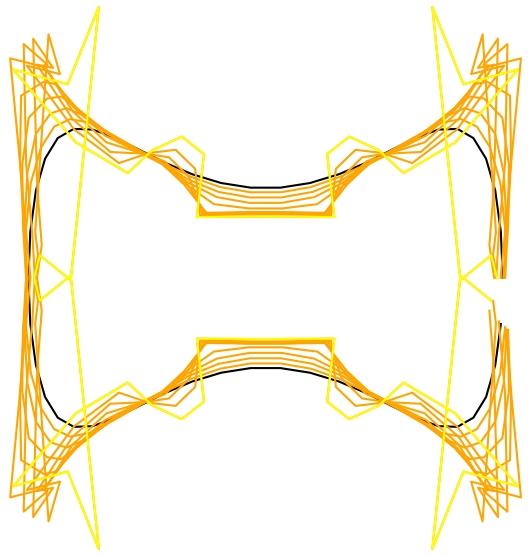}
\caption{\small Starting at the black curve, even after very few steps the flow gets chaotic.}
\label{torus5}
\end{figure}

\begin{figure}[!h]
\centering
\includegraphics[width=0.85\linewidth]{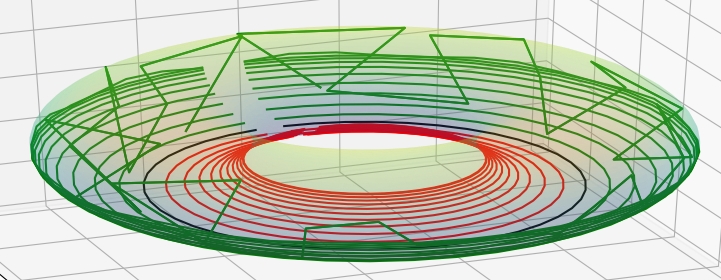}
\caption{\small Starting at the black curve, we look at the flow in forwards (red) and backwards time (green) after the same number of steps}
\label{torus5}
\end{figure}

In this last figure we see how for some time the backwards flow looks quite smooth. This is to be expected, as the inital curve lies on the gradient flow line from the outer to the inner geodesic on this torus. But small errors in the calculation compound and while in forward time these errors are diminished by the flow, in backwards time the errors get magnified.

\newpage

\end{appendices}

\newpage

\newpage

\bibliography{Flowbib}
\bibliographystyle{alpha}

\end{document}